\documentclass[12pt]{amsart}



\usepackage{etoolbox}

\makeatletter
\let\old@tocline\@tocline
\let\section@tocline\@tocline
\newcommand{\subsection@dotsep}{4.5}
\newcommand{\subsubsection@dotsep}{4.5}
\patchcmd{\@tocline}
  {\hfil}
  {\nobreak
     \leaders\hbox{$\m@th
        \mkern \subsection@dotsep mu\hbox{.}\mkern \subsection@dotsep mu$}\hfill
     \nobreak}{}{}
\let\subsection@tocline\@tocline
\let\@tocline\old@tocline

\patchcmd{\@tocline}
  {\hfil}
  {\nobreak
     \leaders\hbox{$\m@th
        \mkern \subsubsection@dotsep mu\hbox{.}\mkern \subsubsection@dotsep mu$}\hfill
     \nobreak}{}{}
\let\subsubsection@tocline\@tocline
\let\@tocline\old@tocline

\let\old@l@subsection\l@subsection
\let\old@l@subsubsection\l@subsubsection

\def\@tocwriteb#1#2#3{%
  \begingroup
    \@xp\def\csname #2@tocline\endcsname##1##2##3##4##5##6{%
      \ifnum##1>\c@tocdepth
      \else \sbox\z@{##5\let\indentlabel\@tochangmeasure##6}\fi}%
    \csname l@#2\endcsname{#1{\csname#2name\endcsname}{\@secnumber}{}}%
  \endgroup
  \addcontentsline{toc}{#2}%
    {\protect#1{\csname#2name\endcsname}{\@secnumber}{#3}}}%

\newlength{\@tocsectionindent}
\newlength{\@tocsubsectionindent}
\newlength{\@tocsubsubsectionindent}
\newlength{\@tocsectionnumwidth}
\newlength{\@tocsubsectionnumwidth}
\newlength{\@tocsubsubsectionnumwidth}
\newcommand{\settocsectionnumwidth}[1]{\setlength{\@tocsectionnumwidth}{#1}}
\newcommand{\settocsubsectionnumwidth}[1]{\setlength{\@tocsubsectionnumwidth}{#1}}
\newcommand{\settocsubsubsectionnumwidth}[1]{\setlength{\@tocsubsubsectionnumwidth}{#1}}
\newcommand{\settocsectionindent}[1]{\setlength{\@tocsectionindent}{#1}}
\newcommand{\settocsubsectionindent}[1]{\setlength{\@tocsubsectionindent}{#1}}
\newcommand{\settocsubsubsectionindent}[1]{\setlength{\@tocsubsubsectionindent}{#1}}

\renewcommand{\l@section}{\section@tocline{1}{\@tocsectionvskip}{\@tocsectionindent}{}{\@tocsectionformat}}%
\renewcommand{\l@subsection}{\subsection@tocline{2}{\@tocsubsectionvskip}{\@tocsubsectionindent}{}{\@tocsubsectionformat}}%
\renewcommand{\l@subsubsection}{\subsubsection@tocline{3}{\@tocsubsubsectionvskip}{\@tocsubsubsectionindent}{}{\@tocsubsubsectionformat}}%
\newcommand{\@tocsectionformat}{}
\newcommand{\@tocsubsectionformat}{}
\newcommand{\@tocsubsubsectionformat}{}
\expandafter\def\csname toc@1format\endcsname{\@tocsectionformat}
\expandafter\def\csname toc@2format\endcsname{\@tocsubsectionformat}
\expandafter\def\csname toc@3format\endcsname{\@tocsubsubsectionformat}
\newcommand{\settocsectionformat}[1]{\renewcommand{\@tocsectionformat}{#1}}
\newcommand{\settocsubsectionformat}[1]{\renewcommand{\@tocsubsectionformat}{#1}}
\newcommand{\settocsubsubsectionformat}[1]{\renewcommand{\@tocsubsubsectionformat}{#1}}
\newlength{\@tocsectionvskip}
\newcommand{\settocsectionvskip}[1]{\setlength{\@tocsectionvskip}{#1}}
\newlength{\@tocsubsectionvskip}
\newcommand{\settocsubsectionvskip}[1]{\setlength{\@tocsubsectionvskip}{#1}}
\newlength{\@tocsubsubsectionvskip}
\newcommand{\settocsubsubsectionvskip}[1]{\setlength{\@tocsubsubsectionvskip}{#1}}

\patchcmd{\tocsection}{\indentlabel}{\makebox[\@tocsectionnumwidth][l]}{}{}
\patchcmd{\tocsubsection}{\indentlabel}{\makebox[\@tocsubsectionnumwidth][l]}{}{}
\patchcmd{\tocsubsubsection}{\indentlabel}{\makebox[\@tocsubsubsectionnumwidth][l]}{}{}

\newcommand{\@sectypepnumformat}{}
\renewcommand{\contentsline}[1]{%
  \expandafter\let\expandafter\@sectypepnumformat\csname @toc#1pnumformat\endcsname%
  \csname l@#1\endcsname}
\newcommand{\@tocsectionpnumformat}{}
\newcommand{\@tocsubsectionpnumformat}{}
\newcommand{\@tocsubsubsectionpnumformat}{}
\newcommand{\setsectionpnumformat}[1]{\renewcommand{\@tocsectionpnumformat}{#1}}
\newcommand{\setsubsectionpnumformat}[1]{\renewcommand{\@tocsubsectionpnumformat}{#1}}
\newcommand{\setsubsubsectionpnumformat}[1]{\renewcommand{\@tocsubsubsectionpnumformat}{#1}}
\renewcommand{\@tocpagenum}[1]{%
  \hfill {\mdseries\@sectypepnumformat #1}}

\let\oldappendix\appendix
\renewcommand{\appendix}{%
  \leavevmode\oldappendix%
  \addtocontents{toc}{%
    \protect\settowidth{\protect\@tocsectionnumwidth}{\protect\@tocsectionformat\sectionname\space}%
    \protect\addtolength{\protect\@tocsectionnumwidth}{2em}}%
}
\makeatother



\makeatletter
\settocsectionnumwidth{2em}
\settocsubsectionnumwidth{2.5em}
\settocsubsubsectionnumwidth{3em}
\settocsectionindent{1pc}%
\settocsubsectionindent{\dimexpr\@tocsectionindent+\@tocsectionnumwidth}%
\settocsubsubsectionindent{\dimexpr\@tocsubsectionindent+\@tocsubsectionnumwidth}%
\makeatother

\settocsectionvskip{6pt}
\settocsubsectionvskip{0pt}
\settocsubsubsectionvskip{0pt}
    


\settocsectionformat{\bfseries}
\settocsubsectionformat{\mdseries}
\settocsubsubsectionformat{\mdseries}
\setsectionpnumformat{\bfseries}
\setsubsectionpnumformat{\mdseries}
\setsubsubsectionpnumformat{\mdseries}


\let\oldtableofcontents\tableofcontents
\renewcommand{\tableofcontents}{%
  \vspace*{-\linespacing}
  \oldtableofcontents}

\setcounter{tocdepth}{2}


\usepackage{graphicx}
\usepackage{amssymb}
\usepackage{amsmath, amscd}
\usepackage{dbnsymb}

\usepackage[bookmarks=true, bookmarksopen=true,%
bookmarksdepth=3,bookmarksopenlevel=2,%
colorlinks=true,%
linkcolor=blue,%
citecolor=blue,%
filecolor=blue,%
menucolor=blue,%
urlcolor=blue]{hyperref}

\newtheorem{thm}{Theorem}[section]

\newtheorem{theorem}[thm]{Theorem}
\newtheorem{proposition}[thm]{Proposition}
\newtheorem{corollary}[thm]{Corollary}
\newtheorem{lemma}[thm]{Lemma}

\theoremstyle{definition}
\newtheorem{definition}[thm]{Definition}
\newtheorem{dfn}[thm]{Definition}

\theoremstyle{remark}
\newtheorem{remark}[thm]{Remark}
\newtheorem{rmk}[thm]{Remark}
\newtheorem{example}[thm]{Example}

\numberwithin{equation}{section}

\newcommand{\R}{{\mathbb{R}}}
\newcommand{\C}{{\mathbb{C}}}
\newcommand{\Q}{{\mathbb{Q}}}
\newcommand{\Z}{{\mathbb{Z}}}
\renewcommand{\P}{{\mathbb{P}}}

\newcommand{\Oo}{{\mathcal{O}}}

\newcommand{\Jj}{{\mathcal{J}}}
\newcommand{\Mm}{{\mathcal{M}}}

\newcommand{\Ordo}{{\mathbf{O}}}

\newcommand{\M}{{\mathcal{M}}}

\newcommand{\inr}{\operatorname{int}}
\newcommand{\End}{\operatorname{End}}
\newcommand{\Hom}{\operatorname{Hom}}

\newcommand{\ev}{\operatorname{ev}}

\newcommand{\pa}{\partial}

\newcommand{\ind}{\operatorname{index}}

\newcommand{\Sk}{\operatorname{Sk}}

\usepackage[margin=1in,marginparwidth=0.8in, marginparsep=0.1in]{geometry}

\begin{document}

\title{Skeins on branes}
\author{Tobias Ekholm}
\address{Department of mathematics, Uppsala University, Box 480, 751 06 Uppsala, Sweden \and
Institut Mittag-Leffler, Aurav 17, 182 60 Djursholm, Sweden}
\email{tobias.ekholm@math.uu.se}
\author{Vivek Shende}
\address{Center for Quantum Mathematics, Syddansk Univ., Campusvej 55
5230 Odense Denmark \and 
Department of mathematics, UC Berkeley, 970 Evans Hall,
Berkeley CA 94720 USA}
\email{vivek.vijay.shende@gmail.com}

\thanks{TE is supported by the Knut and Alice Wallenberg Foundation, KAW2020.0307 Wallenberg Scholar and by the Swedish Research Council, VR 2022-06593, Centre of Excellence in Geometry and Physics at Uppsala University and VR 2020-04535, project grant. \\ \indent VS is supported by NSF  CAREER DMS-1654545 and Villum Fonden Villum Investigator  37814.}

\begin{abstract}

We study 1-parameter families of holomorphic curves with Lagrangian boundary in Calabi-Yau 3-folds.  We show that the expected codimension one phenomena can be organized to match the HOMFLYPT skein relations from quantum topology.  It follows that counting holomorphic curves by the class of their boundaries in the skein module of the Lagrangian gives a deformation invariant result. 
This is a mathematically rigorous incarnation of Witten's assertion that boundaries of open topological strings create line defects in Chern-Simons theory \cite{Witten}. 

Using this theory, we rigorously establish the following prediction of Ooguri and Vafa: the coefficients of the HOMFLYPT polynomial of a link in the three-sphere count the holomorphic curves in the resolved conifold, with  boundary on (a push-off of) the link conormal.
\end{abstract}

\maketitle
\thispagestyle{empty}

\newpage
\renewcommand\contentsname{\vspace*{-25pt}}
\tableofcontents
\newpage

\begin{figure}
	\centering
	\includegraphics[width=.4\linewidth]{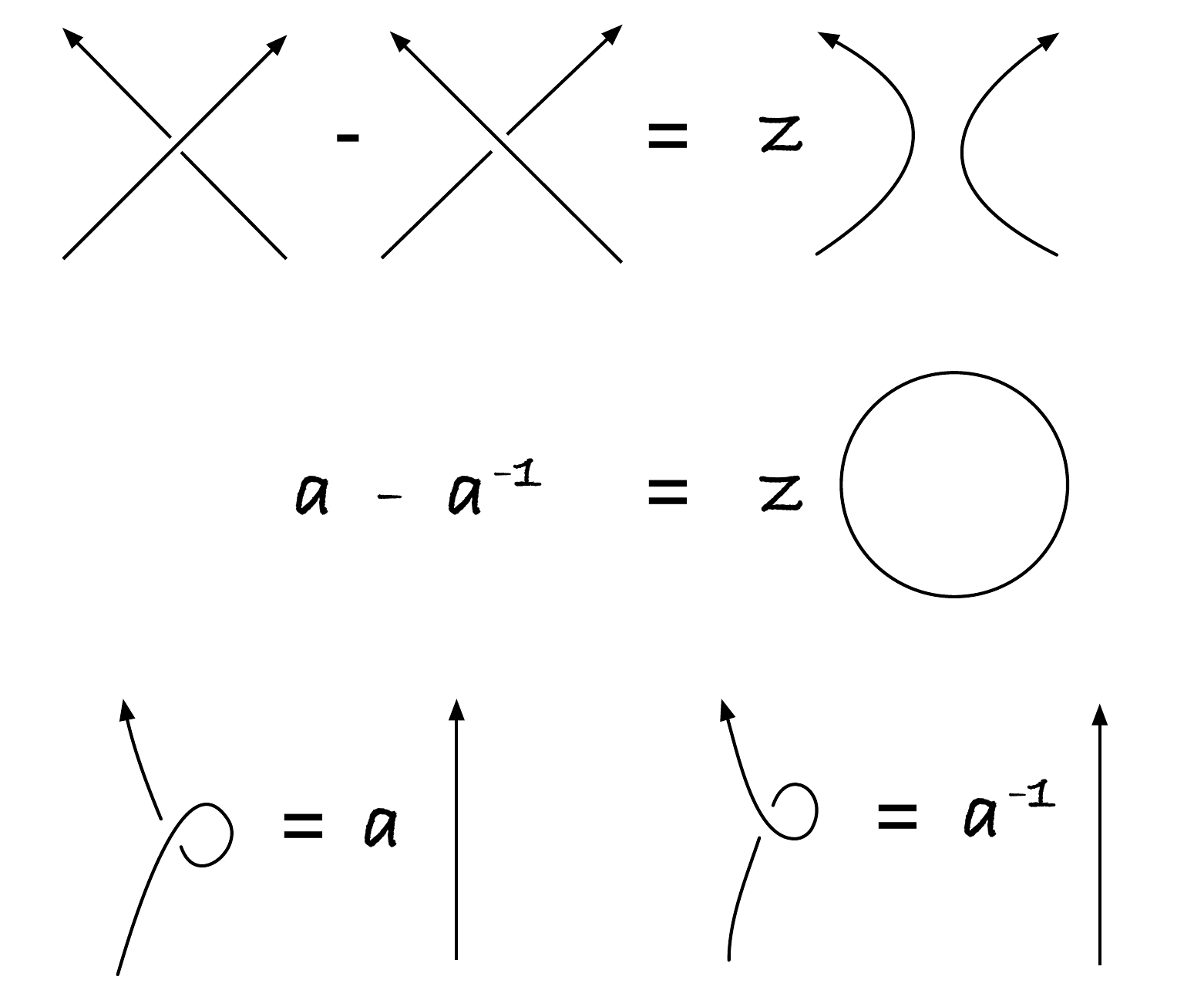}
	\caption{\label{fig:framedskein}The framed HOMFLYPT skein relations.}\label{fig:skeinrel}
\end{figure}

\section{Introduction}\label{Intro}

The skein relations in Figure \ref{fig:skeinrel} serve to define
and calculate the HOMFLYPT invariant of knots and links \cite{HOMFLY, PT}.  Said relations have celebrated interpretations as 
encoding the expected properties of Wilson lines in the Chern-Simons quantum field theory \cite{Witten-QFT}, or correspondingly of traces on braid group representations arising from the study of von Neumann algebras \cite{Jones, Jones2} or quantum groups \cite{Reshetikhin, Turaev-invariants}. 
Our purpose here is to show that the same skein relations emerge
from the study of holomorphic curves with Lagrangian boundary conditions in Calabi-Yau 3-folds, and, consequently, resolve certain long-standing problems regarding the definition and computation of counts of such curves.

\subsection{Skein relations from moduli  of holomorphic maps} 
Let $X$ be a symplectic Calabi-Yau 3-fold, i.e., a symplectic 6-manifold with trivial first Chern class $c_1(X)=0$ and fixed complex trivialization of $\det(TX)$, and $L \subset X$ an oriented Lagrangian with vanishing Maslov class, i.e., the rotation of $\det(L)$ in $\det(X)$ equals zero. Let $J$ be an almost complex structure compatible with the symplectic structure on $X$.

The moduli space of $J$-holomorphic curves in $(X, L)$ has expected dimension zero, 
so one might expect  to be able to define a count of such curves.  
However, moduli spaces of holomorphic curves with boundary themselves have codimension one boundaries that appear in 1-parameter families, so 
one naively does not expect deformation invariance of counts of curves in $(X,L)$, but rather some wall-and-chamber structure with 
counts locally constant in the chambers but changing across walls \cite{Fukaya-Tohoku}.   In fact the situation is even more involved: 
achieving transversality of moduli depends on some choice of perturbation, and for the same reason one would 
expect the resulting counts to depend on the choice of perturbation by some wall and chamber structure.  

The driving observation of the present article is that one can set up this counting problem so that the possible wall crossings precisely match the HOMFLYPT skein relations. Let us sketch how the skein relations appear. 

The generic or expected boundary point of moduli is a map 
$$
u_0\colon (S_0, \partial S_0) \to (X, L),
$$
where the domain curve $S_0$ has a single boundary node. 
There are two kinds of node, `elliptic' and `hyperbolic'.  In 1-parameter degenerations, locally in a $(\C^3,\R^3)$-chart of $(X,L)$, the 
nearby $u_\epsilon( S_\epsilon)$, $\epsilon\in [0,\epsilon_0)$, look as follows, 
\begin{align*}
    &\text{elliptic:} &&  u_\epsilon( S_\epsilon) = \{(x,y,z)\in \C^3 \colon x^2 + y^2 = \epsilon,\, z=0,\, \mathrm{Re}(x)\mathrm{Im}(y)\ge 0\}, \\ 
    &\text{hyperbolic:} &&   u_\epsilon( S_\epsilon) = \{(x,y,z)\in \C^3 \colon x^2 - y^2 = \epsilon,\, z=0,\, \mathrm{Im}(y)\ge 0\}.
\end{align*}
(In Sections \ref{ssec: ell boundary} and \ref{ssec: hyp boundary} we parameterize the moduli space by $r=\frac{1}{-\pi\log(\epsilon)}$, which is natural from the point of view of Floer gluing, see Theorem \ref{t : wall crossing gluing}.)

\begin{figure}
	\centering
	\includegraphics[width=.60\linewidth]{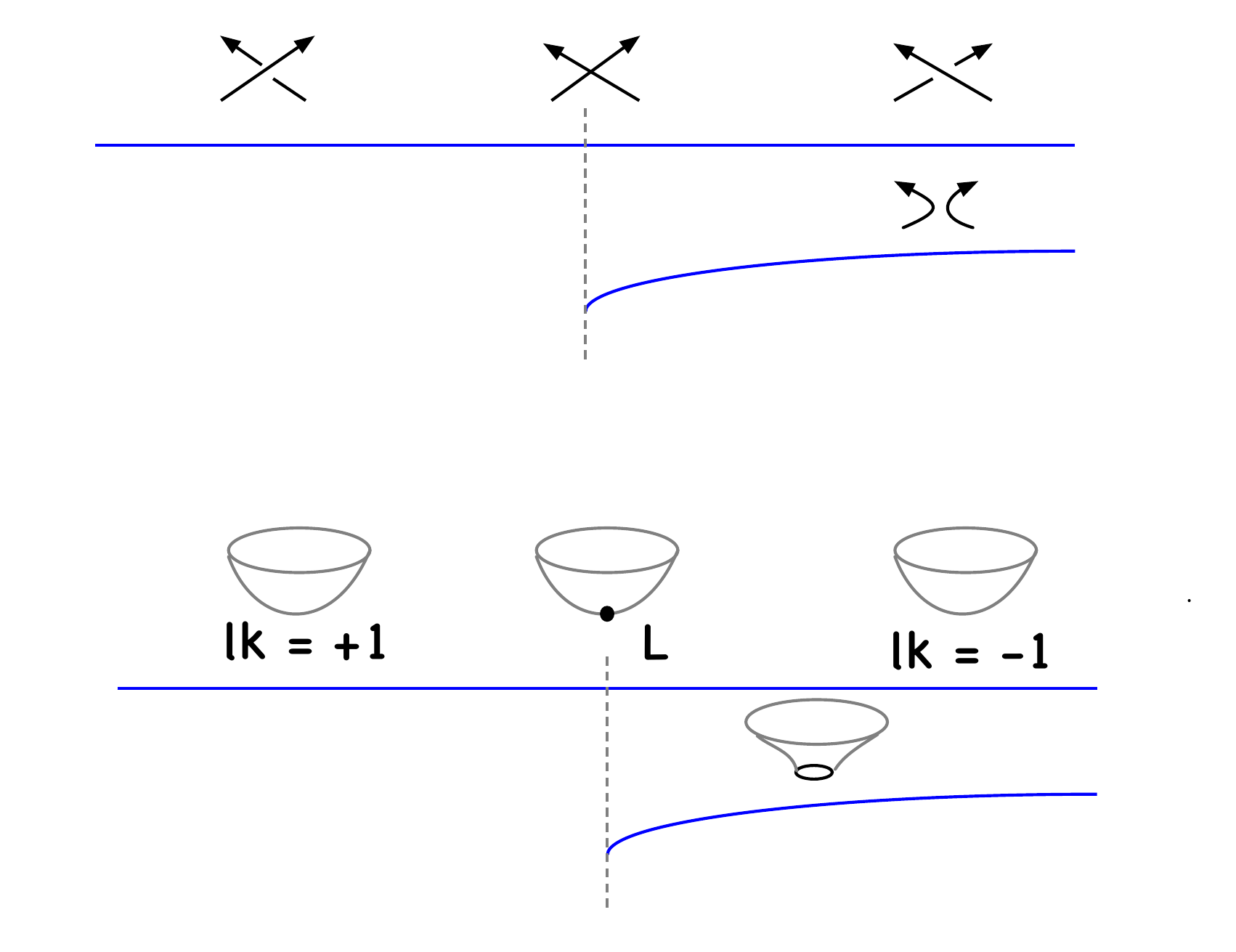}
	\caption{Moduli spaces near nodal curves, top hyperbolic and bottom elliptic. Solid blue lines indicates parameterized moduli spaces of curves. Over the line that goes from left to right we see the curves $\widetilde{u}_t$ near the crossing of boundary arcs in the hyperbolic case and with the Lagrangian in the elliptic case. Over the line with boundary in the middle we see the curves $u_t$ that becomes nodal at $t=0$. The grey dashed line is the wall whose `wall crossing formula' -- arising from the indicated association of points in distinct moduli spaces -- is a skein relation.}\label{fig:nodes}
\end{figure}

Consider a (generic) 1-parameter moduli space whose boundary corresponds to a curve with hyperbolic node, see Figure \ref{fig:nodes}. Then
$u_0(\partial S_0)$ is some singular
link, and $u_\epsilon(\partial S_\epsilon)$ is its resolution appearing on the right hand side of the first
skein relation in Figure \ref{fig:skeinrel}.  On the other hand, if we write $\widetilde{u}_0 \colon (\widetilde{S}_0, \partial \widetilde{S}_0) \to (X, L)$ 
for the map from the normalization, then $[\widetilde{u}_0]$ is an interior point in a 1-parameter moduli space.  Letting 
$\widetilde{u}_{\pm}\colon (\widetilde{S}_\pm, \partial \widetilde{S}_\pm) \to (X, L)$ denote nearby points on either side, $\widetilde{u}_{\pm} (\partial \widetilde{S}_\pm)$ provide the two knots on the left hand side of the first skein relation in Figure \ref{fig:skeinrel}, see Theorem \ref{t : wall crossing gluing}. 

We write $\chi(\,\cdot\,)$ for the Euler characteristic of the curve obtained by smoothing all nodes, sometimes called the arithmetic Euler characteristic, for which $\chi(S_{\pm}) = \chi(S_0) + 1$.  
Thus if we count curves weighted by $z^{-\chi}$, then {\em considering together the families $u_t$ and $\widetilde u_t$}, assuming transversality and possibly adjusting the sign of $z$, the curve count jumps by some multiple of the first skein relation.

The moduli spaces in the case of an elliptic node are similar.  The elliptic node can be viewed as a marked point constrained to lie on the Lagrangian which formally lowers the Euler characteristic of the curve by 1; normalization forgets this exotic marked point.  
The boundaries of the normalization family of curves satisfy $u_+(\partial S_+) \approx  u_- (\partial S_-)$ and neither intersects our $(\C^3,\R^3)$-chart.  Meanwhile  $u_0(\partial S_0)$ is isotopic to a union of 
$u_+(\partial S_+)$ with the single point where the elliptic node appears.  In $u_\epsilon (\partial S_\epsilon)$, 
this point expands to an unknot, see Theorem \ref{t : wall crossing gluing}.

Thus, if we count the maps weighted by $a^{u \OpenHopf L}$, where $u \OpenHopf L$ is some quantity satisfying
$$u_- \OpenHopf L + 1 \ = \ u_\epsilon \OpenHopf L \ = \ u_+ \OpenHopf L - 1,$$
then the wall crossing matches the second HOMFLYPT skein relation in Figure \ref{fig:skeinrel}.
Observe that the 1-parameter family connecting $u_+(S_+)$ and $u_-(S_-)$
is a family of 2-folds in a 6-fold, which at one moment (namely $\widetilde{u}_0$) meets the 3-fold $L$.  Thus we may expect to distinguish $u_+(S_+)$ and $u_-(S_-)$ by their linking number with $L$.  
Since we want in particular $u_+ \OpenHopf L - u_- \OpenHopf L  = 2$, we would like to
 choose
a 4-chain $C$ with $\partial C = 2L$, and define a linking number by the intersection number with this chain: $$u \OpenHopf L := u \cdot C.$$

However, the quantity $u \cdot C$ cannot be expected to be invariant under deformations.  Indeed,  
$u_+(S_+)$ and $u_-(S_-)$ may have boundary on $L$, and  $u_\epsilon(S_\epsilon)$ necessarily does.  
Thus, the intersection points of $u$ and $C$  may travel to the boundary of the domain curve, and then disappear.  
We correct for this as follows (compare \cite{ekholmslk}). We demand that the 4-chain $C$ is smooth near the boundary and that there is a vector field $\xi$ on $L$ such that the inward normal of $C$ along $L$ equals $C = \pm J \xi$.  Since $u$ is $J$-holomorphic, this 
ensures that points of $\partial u \cap C \ne 0$ are precisely those points where $\xi$
is parallel to $\partial C$.  Thus if we use $\xi$ to frame $u(\partial S)$, the moments when $a^{u \cdot C}$ changes are precisely
those when the framing of $u(\partial S)$ changes.  This is the third HOMFLYPT skein relation in Figure \ref{fig:skeinrel}.

\subsection{Skein-valued curve counting} 
Let $\Sk(L)$ be the skein module of $L$, i.e., the formal $\Z[a^{\pm}, z^{\pm}]$-linear span of framed links
in $L$, modulo the HOMFLYPT skein relations.   
For a map $u\colon (S, \partial S) \to (X, L)$ such that $u|_{\partial S}$ is an embedding, 
there is a well defined class $\langle\partial u\rangle  \in \Sk(L)$.  We fix a vector field $\xi$ and 4-chain $C$ as above in order to give framings to curves so as to define skein classes, and to define the linking number $\OpenHopf$. We write $\widehat{\Sk}(L)$ for the completion in $z^{-1}$.  
The above arguments suggest that the following curve count is deformation invariant (although its individual factors are not): 
\begin{equation} \label{the invariant} 
Z_{X,L}=\sum_{(u,S)\in\mathcal{M}} z^{-\chi (S)} \cdot a^{u \OpenHopf L} \cdot \langle\partial u\rangle \ \in \ \widehat{\Sk}(L),
\end{equation} 
where $\mathcal{M}$ is a suitable moduli space of curves.

Let us note that the skein relation does not preserve the topological type of the curves, and 
thus in order to obtain an invariant we will be forced to sum over such topological types.  On the other hand,  we may and often will restrict attention to maps of a fixed class in $H_2(X, L)$. 

As usual for curve counting, we also need our moduli of maps to be suitably compact and and transversely cut out.  To set up our skein valued counting, we will need in addition the following properties of the maps themselves.  First, for the boundary of our maps to define links in $L$, we will want $u|_{\partial S}$ to be an embedding.  In addition, in order that
wall crossings in parameter space directly match the skein relations, we will want that in 1-parameter families, $u(\partial S)$ is a singular link with one double point.  These conditions are what is expected from naive dimension counting.  

When, as for knot conormals, the Lagrangian has topology $S^1\times\R^2$, then curves with boundary in a basic homology class, see Definition \ref{def : basic class}, are somewhere injective, see Lemmas \ref{l : somwhere injectivity exact}, \ref{l : somewhere injective non-exact} for details. It then follows that for generic almost complex structure (as we recall in Lemma \ref{l:basicsurjectivity}) that the desired general position holds away from a locus of maps of the expected codimension, and we then have these desired properties for such curves.

We recall that a $J$-holomorphic map fails to be somewhere injective only if some component either maps by a multiple cover, or to a constant.   
Let us first fix attention on some situation where multiple covers are excluded a priori, e.g., by fixing some class in $H_2(X, L)$ where multiple covers cannot occur for topological reasons. We consider the remaining difficulty: maps which collapse some components
(`ghost bubbles').  We term maps without ghost bubbles `bare'.  In \cite{ghost} we show that a limit of bare maps can only develop a ghost bubble if the image of the non-contracted component has a singularity worse than an 
ordinary (see \cite{ionel-genus1, zinger-sharp, niu2016refined, doan-walpulski-embedded} for similar results).  Since such singularities do not appear for generic $J$ or generic paths of $J$, the locus of bare curves is  compact.  

In the present article, we detail the above reasoning, and deduce in Theorem \ref{thm:invariance} that \eqref{the invariant} is well defined and invariant under deformations when multiple covers are excluded a priori.  As we will see presently, there are already striking applications of our setup in situations where no multiple covers can occur.

In the sequel \cite{bare}, we use the techniques of the abstract perturbation theory of \cite{HWZ, HWZ-GW} to construct a similar theory  the presence of multiple covers, thereby establishing the well-definedness and invariance of \eqref{the invariant} in full generality, which is required for various further applications \cite{ekholm-shende-unknot, ekholm-shende-colored, scharitzer-shende-mirror-cpgl, scharitzer-shende-cluster, hu-schrader-zaslow, ekholm-longhi-nakamura-hopf, ekholm-longhi-shende, ekholm-longhi-park-shende,  hu-shende-strips}.  

\begin{remark}
The skein-valued curve counting is a mathematical shadow
of Witten's proposal that Lagrangian branes in the topological A-model carry the Chern-Simons gauge 
theory \cite{Witten}.  Witten reasoned at the level of the action functional, by
the cubic vertex of the open string field theory to the Chern-Simons functional, and then further argued directly that open strings
contribute Wilson line operators.  By contrast, what we are doing (in more physical terms) is showing that if the open topological
$A$-model is to have the expected background independence, then the contributions of the open strings must satisfy the same relations
as do the Chern-Simons Wilson lines, without ever computing what the contribution of such a string actually is, or indeed discussing
to what it should contribute.   
\end{remark}

\subsection{The Ooguri-Vafa conjecture} 
Using our setup, we will establish a proposal from the string theory literature \cite{GV-geometry, GV-gravity, OV}:
the coefficients of the HOMFLYPT invariant are counts of certain holomorphic curves.  

Let us  recall some relevant notions to state this precisely. 
The cotangent bundle $T^{\ast} S^{3}$ and the total space $X$ of the bundle $\mathcal{O}(-1)\oplus\mathcal{O}(-1)\to\C\P^{1}$ are 
respectively a deformation and a resolution of a quadric cone in $\C^{4}$.  
The cone itself is understood as the limit where respectively the radius of $S^{3}$ or the area of $\C \P^{1}$ tends to zero.
The space $X$ is called the \emph{resolved conifold}.  Passing between $T^\ast S^3$ and $X$ is the \emph{conifold transition}.  


Let $K$ be a $m$-component link in $S^3$ and let $L_{K;0} \subset T^\ast S^3$ be the conormal of $K$. Shifting $L_{K;0}$ along a closed 1-form dual to the tangent vector of $K$, by some $\epsilon > 0$, gives a non-exact Lagrangian $L_{K;\epsilon}$ disjoint from the 0-section.  
Topologically, $L_{K;\epsilon}$ is $\bigsqcup_{1}^m S^{1}\times\R^{2}$. 
The shift of $L_{K;\epsilon}$ determines a positive generator $\beta$ of $H_{1}(L_{K;\epsilon})$. 
Using the conifold transition, we identify $L_{K;\epsilon}$ with a Lagrangian in $X$ and denote it $L_{K}\subset X$.
%

Let $J$ be a generic almost complex structure on $X$ that is $\R$-invariant at infinity and standard near $\C\P^1$, see Section \ref{sec : somewhere inj} for details. Assume that $L_{K}$ lies in the $\R$-invariant region and let $\Gamma= \bigsqcup_1^m S^{1}\times\{0\}\in \Sk(L_{K})$, the class in the skein of $L_{K}$ represented by the union of positively oriented central circles, one for each component. Fix a $4$-chain for $L_{K}$ such the intersection number with $\C\P^1$ equals zero, $\C\P^1\OpenHopf L_{K}=0$.

\begin{thm}\label{thm:basic}
    Let $\M$ be the moduli of bare connected $J$-holomorphic curves $(u,S)$ in $X$ with boundary in $L_K=L_{K;\epsilon}$ in a basic class $\beta$. If either $k=1$ or if $k>1$ and $\epsilon$ is sufficiently small then for an almost complex structure $J$ as above, $\M$ is a transversely cut out compact oriented 0-manifold. Curves in $\M$ are embeddings with interior disjoint from $L_{K}$, the skein valued count of curves in $\M$ is invariant and equals
    \begin{equation}\label{eq:basic}
    Z_{X,L_K,\beta}' := \sum_{(u,S)\in\M} (-1)^{o(u)} a^{2 \deg(u)} z^{-\chi(S)} a_L^{u \OpenHopf L_K}\, \langle\partial u\rangle = H_K(a,z)\cdot \Gamma \in \Sk(L_K), 
    \end{equation}
    where $(-1)^{o(u)}$ is the orientation sign of $[u] \in \M$, $\deg(u)$ is the homological degree of the curve $u$, and $\chi(S)$ is the topological Euler characteristic of the domain.
    
    Furthermore, there exists generic $J$ that are arbitrarily SFT-stretched around $L_{K;\epsilon}$, see Section \ref{ssec:SFTlimitnoncompactL}, and for any sufficiently stretched such $J$, the boundary $\partial u\subset L_K$ of each $u\in\mathcal{M}$ is arbitrarily $C^1$-close to the central circles representing $\Gamma$ and $u \OpenHopf L_K=0$.  
\end{thm}
In \eqref{eq:basic}, since $u$ has boundary, the definition of $\deg(u)$ requires certain choices, and correspondingly the definition of the 
HOMFLYPT polynomial also requires a choice of framing on the link.  We explain in Section \ref{sec:proofthmbasic} how compatible choices
are naturally induced by any choice of vector field on $S^3$ transverse to the link.

Our proof (given in Section \ref{sec:proofthmbasic}) of Theorem \ref{thm:basic} will be an application of existence and invariance of the skein-valued curve counting for curves in basic homology classes and generic almost complex structures, plus a neck stretching argument. 
The argument {\em does  not} require consideration of multiply covered curves, so our proof of Theorem \ref{thm:basic} does not depend on the further constructions of \cite{bare}.

\begin{remark}  
Ooguri and Vafa arrived at this formula via a `large $N$ duality' relating the topological string theory on $T^*S^3$ with
$N$ branes on the zero section, and topological string theory in the resolved conifold.  Here the number $N$ of branes in $X_\epsilon$ 
is supposed to be related by a change of variables to a parameter measuring the area of the $\C\P^1$.   
In our setup, we do not explicitly have a `number of branes'.  Instead we have the 4-chain (of $S^3$), and the resulting Lagrangian linking number 
it determines.  
\end{remark}

\vspace{2mm} \noindent {\bf Acknowledgements.} 
 We thank 
Mohammad Abouzaid, Luis Diogo, Aleksander Doan, Kenji Fukaya, Penka Georgieva, Vito Iacovino, Melissa Liu, Pietro Longhi, John Pardon, and Jake Solomon for helpful
discussions.  Many of the ideas of this article were developed
in discussions during 
the spring 2018 MSRI-semester \emph{Enumerative geometry beyond
	numbers}. We thank MSRI for the wonderful environment.

\section{Linking numbers of open curves and Lagrangians}\label{sec:linking}
In this section we will define a linking number between a 3-dimensional Lagrangian submanifold $L$ in a 6-dimensional symplectic manifold $X$ and an open holomorphic curve $u\colon (S,\partial S)\to (X,L)$.  While the dimensions are appropriate, the objects are not transverse and one of them is not closed.  Nevertheless, an almost complex structure in a neighborhood of $L$ and certain additional choices, will allow us to define this number. Similar constructions were used before in the context of real algebraic geometry, see \cite{viro,ekholmslk,bjorklund}.

Let us first recall the definition of linking number in the usual sense. 
Suppose $M$ is an oriented manifold of dimension $n$ and $A, B \subset M$ are disjoint oriented cycles of dimensions
$a, b$ with $a+b = n-1$.  Assuming $B$ bounds,  then one defines the linking number $A \OpenHopf B$ by choosing 
a chain $C$ with $\partial C = B$, and setting $A \OpenHopf B := A \cdot C$.  
A different choice $C'$ of chain will have $\partial (C' - C) = 0$, and so changing the chain affects the linking number 
by the intersection of $A$ with the appropriate element of $H_{b+1}(M)$.  In particular, if $H_{b+1}(M) = 0$, then the linking number is well defined. 
If $M$ is non-compact, we will allow the chains $B$ and $C$ to have closed but not necessarily compact support, hence the above homology groups should be replaced by Borel-Moore homology, but we will still take $A$ to be compact. (In case $A$ is also non-compact, also linking at infinity between the  boundaries of $A$ and $C$ at infinity must be taken into account.)   

We return to our situation of interest. Let $L$ be a smooth orientable $n$-dimensional manifold, $n$ odd. Consider the cotangent bundle $T^\ast L$ of $L$ with its standard symplectic form $d(pdq)$. Fix an almost complex structure $J$ on a neighborhood of $L\subset T^\ast L$ compatible with $d(pdq)$. The symplectic form $d(pdq)$ and $J$ induces a Riemannian metric on $T^\ast L$ in the neighborhood and in particular give associated metrics on the vector bundles $TL$ and $T^\ast L$ along $L$. Fix a tangent vector field $\xi$ along $L$ with transverse zeros and let $\xi^\ast$ be the dual covector field. 

Consider the $\epsilon$-disk neighborhood $D_{\epsilon}^{\ast}L$ of $L$ in $T^{\ast}L$. 
Let $Z(\xi)$ denote the zeros of $\xi$. For $q\notin Z(\xi)$, 
let $\widehat {\xi^\ast}(q)=\frac{1}{|\xi^\ast(q)|}\xi^\ast(q)$ denote the normalization of $\xi^\ast$. For $q\in Z(\xi)$, let $(-1)^{i(q)}D_{\epsilon}(q)$ denote the $\epsilon$-disk in $T_{q}^\ast L$, oriented according to the index of $\xi$ at $q$.
 
Define $\partial_\epsilon \Delta_{\xi}\subset D_\epsilon^{\ast}L$ as
\[ 
\partial_\epsilon\Delta_{\xi}=
\overline{\left\{ (q,p)\colon p=\epsilon\,\widehat{\xi^\ast}(q) \right\}}
\ \cup \ \bigcup_{q\in Z(\xi)} (-1)^{i(q)} D_{\epsilon}(q),
\]
Let $b^\xi\colon L\times [0,1]\to D^\ast_\epsilon L$ be a smooth map such that 
\begin{align*}
&b^\xi|_{L\times 0} =\mathrm{id} \quad \text{ and }\quad \partial_t b^\xi(q,0)=J\xi(q), \text{ for }q\notin Z(\xi),\\ 
&b^\xi((L\setminus Z(\xi))\times(0,1])\cap L  = \varnothing \quad \text{ and }\quad
b^\xi(L\times 1)=\partial_\epsilon\Delta_\xi.    
\end{align*}
Then 
\[ 
\partial \, b^\xi(L\times[0,1])= L - \Gamma'_{\xi} - \sum_{q\in Z(\xi)} (-1)^{i(q)}  D_{\epsilon}(q),
\]  
where $\Gamma'_{\xi}$ denotes the graph of $\epsilon\,\widehat{\xi^\ast}$ over $L-Z(\xi)$. Consider the chain
\begin{equation}\label{eq : def Delta chain}
\Delta_{\pm\xi}:=b^\xi(L\times[0,1]) + b^{-\xi}(L\times[0,1])
\end{equation}
and note that
\[ 
\partial \Delta_{\pm\xi}  = 2\cdot L - \Gamma'_{\xi} - \Gamma'_{-\xi},
\]
since the fibers over critical points cancel.

Now let $X$ be any symplectic manifold of dimension $2n$ and $L \subset X$ a Lagrangian submanifold. Let $J$ be an almost complex structure on $X$ and $\xi$ be a vector field tangent to $L$ along $L$ with transverse zeros. Identify some neighborhood of $L$ in $X$ symplectically with $D_{\epsilon}^{\ast}L$. 
\begin{definition}\label{def : 4-chain}
An \emph{$L$-bounding chain compatible with $\xi$} is a singular chain $C$ with $\partial C = 2 L$, such that 
\[
C=\Delta_{\pm\xi} + C',
\]
where $C'$ is a chain transverse to $L$. 
\end{definition}

\begin{remark}
One can produce such $C$ by taking any chain $C''$ with $\partial C'' = \partial\Delta_{\pm\xi}$, and perturb slightly to obtain $C'$. Such a chain exists whenever $2L$ is null-homologous in $X$. 
\end{remark}

\begin{definition} \label{brane} 
A Lagrangian brane is a tuple $(L,\xi,J,C)$ where $L$ is a Lagrangian, $J$ an almost complex structure on $X$, $\xi$ is a vector field with transverse zeros, and $C$ is an $L$-bounding chain compatible with $\xi$.  We often denote the tuple just by $L$. 
\end{definition}

In the situation of the definition we may consider the 1-chain $\gamma := (C - \Delta_{\pm\xi}) \cap L \subset L$.  Then $\gamma$ is a cycle.  If $\gamma = 0 \in H_1(L, \Z)$ then we say that $C$ is \emph{admissible}.  

\begin{definition} \label{admissiblebrane}
An admissible Lagrangian brane is a Lagrangian brane together with a choice of chain $\sigma$ with $\partial \sigma = \gamma$. 
\end{definition}

We now specialize to $\dim L = 3$.\footnote{In other dimensions, the same data should allow to define linking numbers for appropriate dimensional parametric families of holomorphic maps.}

\begin{definition}\label{holomorphiconboundary}
If $S$ is a Riemann surface with boundary $\partial S$ and complex structure $j$ and if $u\colon (S,\partial S)\to (X,L)$ is a smooth map then we say that $u$ is \emph{holomorphic along the boundary} if
\[
    (du + J\circ du \circ j)|_{\partial S}=0.
\]
\end{definition}

\begin{definition} \label{transversetolag}
Let $L = (L, \xi, J, C)$ be a 3-dimensional Lagrangian brane.  
Let $u\colon (S, \partial S) \to (X, L)$ be a smooth map.  
We say $u$ is \emph{transverse} to $L$ if: 
\begin{enumerate}
	\item $u_*(T\partial S)$ is everywhere linearly independent from $\xi$.
	\item $u(\inr(S))$ is disjoint from $L$ and transverse to $C \setminus L$. 
	\item $u(\partial S)$ is disjoint from $\gamma$.
\end{enumerate}
\end{definition}

\begin{remark}
Note that the hypothesis in Definition \ref{transversetolag} that $u_*(T\partial S)$ is everywhere linearly independent from $\xi$
implies in particular that
$u_*(T\partial S)$ is nowhere zero, i.e., $u$ is an immersion of the boundary, and that $u(\partial S)$ is disjoint from $Z(\xi)$. Then since $u$ is holomorphic on the boundary it follows that there exists a neighborhood $N$ of $\partial S$ in $S$ such that $u(N\setminus \partial S)\cap C=\emptyset$.
\end{remark}

Let $L$ be an admissible 3-dimensional Lagrangian brane, and let $u\colon(S, \partial S) \to (X, L)$ be transverse to $L$ and holomorphic along the boundary. Inside $u|_{\partial S}^*TL$, let $\nu$ be the (positive) unit normal to the plane spanned by the
ordered basis of the tangent vector to $\partial u$ and $\xi$. Consider a collar neighborhood of the boundary, $\partial S\times[0,1]\subset S$. Extend $J \nu$ over $S\times[0,1]$ by multiplying by a cut off function with values in $[0,1]$ to a section of $u^* TX$ supported in a small neighborhood of the boundary inside the collar neighborhood.  Shift $u$ slightly along $J \nu$ and denote the resulting map $u_{J\nu}$. Then, since $u$ is holomorphic on the boundary and condition $(1)$ in Definition \ref{transversetolag} holds, $u_{J\nu}(\partial S)$ and $C$ are disjoint. 
\begin{definition} \label{def:lagrangianlinking}
Define 
\begin{equation}  \label{eq:linking}
u \OpenHopf L := u_{J\nu} \cdot C + \partial u \OpenHopf \gamma,
\end{equation} 
where $\partial u \OpenHopf \gamma := \partial u \cdot \sigma$ is the ordinary linking number in $L$.
\end{definition}

\begin{lemma}\label{l : intersections without shifting}
Let $u\colon (S, \partial S) \to (X, L)$ be an embedding transverse to $L$ and holomorphic along the boundary. If $I(u,C)$ denotes the algebraic number of interior intersection points between $u(S)$ and $C$ then $u_{J\nu} \cdot C=I(u,C)$ and consequently
\begin{equation}\label{eq:linking without shift}
u \OpenHopf L = I(u,C) + \partial u \OpenHopf \gamma.
\end{equation}
\end{lemma}

\begin{proof}
This is immediate, since $\nu$ is independent of $\xi$ there are no intersections contributing to $u_{J\nu}\cdot C$ near the boundary.
\end{proof}

The term $\partial u \OpenHopf \gamma$ is a sort of self-linking correction, which is present to ensure
the following invariance:

\begin{lemma} \label{l:throughgamma}
Let $u_t\colon (S, \partial S) \to (X, L) $ be a 1-parameter family of maps that are holomorphic along the boundary and that
satisfies the hypotheses of Definition \ref{transversetolag} save with $(3)$ replaced by 
`$u_t(\partial S)$ intersects $\gamma$ in general position'. Then $u_t \OpenHopf L$, which is defined away from the discrete set of $t$ where $u_t(\partial S)$ intersects $\gamma$, is constant.  
\end{lemma}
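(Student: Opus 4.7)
The plan is to analyze $u_t \OpenHopf L = u_{t,J\nu} \cdot C + \partial u_t \OpenHopf \gamma$ in a neighborhood of each isolated moment $t_0$ at which $\partial u_{t_0}$ crosses $\gamma$ transversely at a single point $p_0$. Away from such moments both summands are well defined and integer valued, hence locally constant, so it suffices to check that at each $t_0$ the jumps in the two summands cancel exactly.

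The first summand is easy. Since $\partial \sigma = \gamma$ and $\partial u_t$ sweeps transversely across $\gamma$ at time $t_0$, the intersection count $\partial u_t \cdot \sigma$ acquires or loses the single point $p_0$; the jump is $\varepsilon_0 \in \{\pm 1\}$ equal to the signed transverse intersection inside $L$ of the 2-chain $\bigcup_t \partial u_t$ with $\gamma$ at the parameter corresponding to $(p_0, t_0)$.

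For the second summand we would decompose $C = \Delta_{\pm\xi} + C_{\text{transverse}}$ and treat the pieces separately. Because $\nu$ is the positive unit normal to $\operatorname{span}(T\partial u, \xi)$ inside $TL$, the shift vector $J\nu$ is orthogonal in $JTL$ to $\pm J\xi$; as $\Delta_{\pm\xi}$ is supported in the $\epsilon$-tube over $L$ in the $\pm J\xi$ directions, the shifted boundary $\partial u_{t,J\nu}$ stays uniformly away from $\Delta_{\pm\xi}$ for $t$ near $t_0$, and so $u_{t,J\nu} \cdot \Delta_{\pm\xi}$ has no jump at $t_0$. The discontinuity is therefore concentrated in $u_{t,J\nu} \cdot C_{\text{transverse}}$, which one computes by a Stokes argument applied to the 3-chain $W = \bigcup_{t \in [t_0-\delta, t_0+\delta]} u_{t,J\nu}$. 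Writing $\partial W = u_{t_0+\delta,J\nu} - u_{t_0-\delta,J\nu} + V$ with $V = \bigcup_t \partial u_{t,J\nu}$ gives
$$\bigl(u_{t_0+\delta,J\nu} - u_{t_0-\delta,J\nu}\bigr)\cdot C_{\text{transverse}} \ =\ -\,V \cdot C_{\text{transverse}} \ +\ W \cdot \partial C_{\text{transverse}}.$$
Since $\partial C_{\text{transverse}} = \Gamma'_\xi + \Gamma'_{-\xi}$ sits at distance exactly $\epsilon$ from $L$ in the $\pm J\xi$ directions, the term $W \cdot \partial C_{\text{transverse}}$ vanishes locally for small $\delta$. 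The intersection $V \cap C_{\text{transverse}}$ consists of a single transverse point at $p_0 + \epsilon J\nu$ (which lies on $C_{\text{transverse}}$ precisely because $p_0 \in \gamma$ and $C_{\text{transverse}}$ flows transversely out of $\gamma$ into $JTL$), contributing a sign $\varepsilon_0' \in \{\pm 1\}$; thus the jump of $u_{t,J\nu} \cdot C_{\text{transverse}}$ equals $-\varepsilon_0'$.

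The hard part will be the sign check $\varepsilon_0 = \varepsilon_0'$. We would set up adapted local coordinates at $p_0$ with $L = \{y_1=y_2=y_3=0\} \subset \R^6_{(x_1,y_1,x_2,y_2,x_3,y_3)}$, $\gamma$ tangent to $\partial_{x_1}$, $\xi = \partial_{x_3}$, and the tangent to $\partial u_{t_0}$ at $p_0$ equal to $\partial_{x_2}$; then $\nu = \partial_{x_1}$, $J\nu = \partial_{y_1}$, and near $p_0$ the chain $C_{\text{transverse}}$ is tangent to the 4-plane $\{x_2 = x_3 = 0\}$ with orientation fixed by $\partial(\Delta_{\pm\xi}+C_{\text{transverse}}) = 2L$. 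A direct orientation bookkeeping for the four relevant transverse intersections (tangent to $\sigma$ and to $\bigcup_t \partial u_t$ inside $L$, tangent to $V$ and to $C_{\text{transverse}}$ inside $X$) shows that the sign contributed by inserting $J$ in the shift $J\nu$, combined with the induced boundary orientation on $\sigma$, is exactly what is needed to force $\varepsilon_0 = \varepsilon_0'$. Hence the total jump $\varepsilon_0 - \varepsilon_0'$ vanishes and $u_t \OpenHopf L$ extends continuously across each $t_0$, so it is constant.
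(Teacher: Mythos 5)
Your overall strategy coincides with the paper's: away from the crossing moments both terms of $u_t \OpenHopf L$ are locally constant, and at a transverse crossing of $\partial u_{t_0}$ with $\gamma$ one must show that the jump of $\partial u_t\cdot\sigma$ is exactly cancelled by a jump of $u_{t,J\nu}\cdot C$ caused by the shifted boundary sweeping once across the part of $C$ transverse to $L$. Your reduction (the $\Delta_{\pm\xi}$ piece is irrelevant because $J\nu$ is orthogonal to $\pm J\xi$, so the jump sits in the transverse piece and is detected by crossings of $V=\bigcup_t\partial u_{t,J\nu}$ with it) is a correct reformulation of what happens.

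However, there is a genuine gap at the decisive step. First, your identification of the compensating intersection point -- ``at $p_0+\epsilon J\nu$, because $C_{\mathrm{transverse}}$ flows transversely out of $\gamma$ into $JTL$'' -- is not a consequence of the definition of an admissible bounding chain: nothing forces $T_{p_0}C$ to contain $J\nu$ along $\gamma$, and in the paper's own local model, where $C=\{x_1=y_2,\ x_2=y_1\}$, $\sigma=\{x_1\le 0,\ x_2=0\}$ and $u_t(\zeta)=(t,\zeta,0)$, the tangent space of $C$ at the crossing point does \emph{not} contain $J\nu=\partial_{y_1}$, and the compensating intersection appears elsewhere (at $(t,it,0)$, respectively on the shifted boundary at a point displaced along $\partial u$, not at $p_0+\epsilon J\nu$). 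Your normal form for $C_{\mathrm{transverse}}$ (the $4$-plane $\{x_2=x_3=0\}$) is therefore an extra, unjustified assumption on $C$, not a model one is entitled to. Second, and more importantly, the sign identity $\varepsilon_0=\varepsilon_0'$ is precisely the content of the lemma -- with the opposite correlation the two jumps would add to $\pm2$ rather than cancel -- and you only assert it (``a direct orientation bookkeeping \dots shows''), you do not perform it. The paper's proof consists exactly of carrying out this check: using that the curve is symplectic near its boundary to put the family in the holomorphic normal form $u_t(\zeta)=(t,\zeta,0)$ on the upper half plane, choosing an explicit admissible $C$ and $\sigma$ as above, and exhibiting that for $t<0$ there is one intersection contributing to $\partial u\cdot\sigma$ and none with $C$, while for $t>0$ it is the reverse. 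Until you either carry out an equivalent orientation computation or verify cancellation in an explicit admissible local model (also noting, in your Stokes step, that $W\cdot\partial C_{\mathrm{transverse}}=W\cdot(\Gamma'_\xi+\Gamma'_{-\xi})$ need not vanish for free and must be excluded by genericity/transversality near $t_0$), the proposal does not establish the lemma.
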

\begin{proof}
Evidently this quantity is locally constant where the conditions of  Definition \ref{transversetolag} hold.  It remains to check moments when $u_t(\partial S)$ crosses $\gamma$ in general position, i.e., at crossing moments the tangent vector of $u_t(\partial S)$ is linearly independent with the tangent vector of $\gamma$. 

Since the curve is holomorphic along the boundary we have the following local model up to first order. The ambient space is $\C^{3}$ with coordinates $(x_{1}+iy_{1},x_{2}+iy_{2},x_{3}+iy_{3})$. The Lagrangian $L$ is $\R^{3}\subset\C^{3}$ and the 4-chain $C$ is the subset given by $\{x_{1}=y_{2},x_{2}=y_{1}\}$. Then $\gamma$ is the subset of $L$ given by $x_{1}=x_{2}=0$ and we take the bounding 2-chain $\sigma$ to be $\{x_{1}\le 0, x_{2}=0\}$. 
The family of maps holomorphic on the boundary is $u_{t}(\xi+i\eta)=(t,\xi+i\eta,0)$, for $\xi+i\eta$ in the upper half plane. Then for $t< 0$ there is an intersection point contributing to $\partial u \cdot \sigma$ at $(t,0,0)$ and no intersection point in $u_{t} \cap C$, whereas for $t>0$ there is no intersection point in $\partial u\cap\sigma$ but the point $(t,it,0)$ contributes to $u_{t}\cdot C$. 

We conclude that the change in linking 
$u_t(\partial S) \OpenHopf \gamma$ is exactly compensated by the appearance or disappearance of 
an intersection point in $u_{J\nu} \cdot C$.  
\end{proof}

Another possible degeneration in a 1-parameter family is that at some moment, the tangent vector to $u(\partial S)$ becomes
a multiple of $\xi$ at one point of the boundary. Here, general position means that 
under the inverse of the exponential map, followed by orthogonal projection along $\xi$, the family of boundary curves $u_t(\partial S)$ in a neighborhood of the tangency gives a versal deformation of a semi-cubical cusp. Recall that we used the linear independence of $\xi$ and the boundary tangent to 
define the framing of $u(\partial S)$; consequently we term a degeneration of this type a {\em transverse framing degeneration}.
Indeed at such a moment the framing will change by $\pm 1$. \footnote{Recall that framing is a $\Z$-torsor, i.e.~it makes sense
to add integers to it, with the meaning that the ribbon of the knot undergoes more or less twists.}  

We used the same linear independence for another reason: to define the push off $u_{J\nu}$, which entered into our linking formula.  

\begin{lemma} \label{l:tangency} 
	In a 1-parameter family  $u_t\colon (S, \partial S) \to (X, L)$ of maps that are holomorphic along the boundary and satisfy the hypotheses of Definition \ref{transversetolag}, save at transverse framing degenerations, the framing plus $u_t \OpenHopf L$, which is defined away from a discrete set of $t$, is constant. 
\end{lemma}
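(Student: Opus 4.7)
The plan is to reduce to a local model around the tangency point at time $T_0$, compute both the framing jump and the jump of $u_T \OpenHopf L$ explicitly, and verify that they cancel; away from $T_0$, both quantities are locally constant by Lemma \ref{l:throughgamma} (together with genericity of $\xi$ on the moving boundary).

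First I would pick Darboux coordinates $z_j = x_j + i y_j$ near the tangency point $p_0 = u_{T_0}(s_0)$ so that $L = \R^3 \subset \C^3$, $p_0 = 0$, and $\xi = \partial_{x_1}$. Then the 4-chain $C$ coincides near $L$ with the slab $\Delta_{\pm\xi} = \{y_2 = y_3 = 0,\ |y_1| \le \epsilon\}$, and the 1-cycle $\gamma$ can be chosen to miss $p_0$, so the self-linking correction $\partial u \OpenHopf \gamma$ does not contribute locally. By versality, I can further reparameterize the boundary and change coordinates on $\xi^\perp \subset L$ to put the family in the normal form
\[
\partial u_T(s) = \bigl(s,\ s^2,\ s^3 + c(T - T_0)\,s\bigr) + \text{higher order},\qquad c \ne 0,
\]
with the cusp at $(s, T) = (0, T_0)$.

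With this normal form in hand, the framing jump is a winding computation: the projected tangent in $\xi^\perp$ is $(2s,\ 3s^2 + c(T - T_0))$, and unwrapping its argument as $s$ sweeps through a neighborhood of $0$ shows that the winding around the origin changes by $-\sgn(c)$ as $T$ increases through $T_0$, so $\mathrm{fr}(\partial u_T)$ jumps by $-\sgn(c)$. For the linking number, since $u_T$ is $J$-holomorphic the Cauchy--Riemann equations fix the interior extension to leading order,
\[
u_T(s+it) = \bigl(s,\ t,\ s^2 - t^2,\ 2st,\ s^3 - 3st^2 + c(T-T_0)\,s,\ 3s^2 t - t^3 + c(T-T_0)\,t\bigr) + \text{h.o.t.},
\]
and the intersection equations $y_2 = y_3 = 0$ with $t > 0$ reduce to $s = 0$, $t^2 = c(T - T_0)$. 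Thus a single interior intersection with $C$ is born from (or absorbed into) the boundary as $T$ crosses $T_0$ in the direction $c(T-T_0) > 0$, contributing $+\sgn(c)$ to the jump of $u_{T, J\nu}\cdot C$, exactly cancelling the framing jump.

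The main obstacle I expect is not the cancellation itself, which is forced by the local model above, but keeping all sign conventions coherent: the orientation of $\Delta_{\pm\xi}$, the direction of the positive unit normal $\nu$ (whose value swings across the tangency in a specific way tied to the versal cusp), the sign of the nascent interior intersection with $C$, and the sign of the framing change must all be tracked under a single convention (e.g.\ $(\tau, \xi, \nu)$ positively oriented in $TL$), after which they are forced to fit together as above.
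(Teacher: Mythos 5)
Your proposal is correct and follows essentially the same route as the paper: reduce to the standard local model of a transverse framing degeneration, where your holomorphic family $(z,\,z^{2},\,z^{3}+c(T-T_0)z)$ is, up to permuting coordinates, exactly the paper's model $u_{\pm}(z)=(z^{2},\,z(z^{2}+s),\,\pm z)$, and verify that the crossing lost in the $\xi$-projection (framing jump) is compensated by the interior intersection with $C$ born at $z=i\sqrt{c(T-T_0)}$, matching the paper's intersection at $u(i\sqrt{s})$. The only delicate point, as you note, is fixing orientation conventions consistently, which the paper handles at the same level of detail.
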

\begin{proof}
We will show that the framing change is canceled by a change in the term $u_{J\nu} \cdot C$. Since the maps $u_t$ are holomorphic along the boundary we have the following local model up to first order. Consider local coordinates 
\[ 
(z_{1},z_{2},z_{3})=(x_{1}+iy_{1},x_{2}+iy_{2},x_{3}+iy_{3})\in\C^{3}
\]
on $X$ with $L$ corresponding to $\R^{3}$. Assume that the vector field $\xi$ is $\xi=\partial_{x_{3}}$. Then the 4-chain $C$ is locally given by $C=C^{+}+C^{-}$, where 
\[ 
C^{\pm}=\{y_{2}=y_{3}=0,\pm y_{3}\ge 0\}.
\]
The family of maps holomorphic on the boundary with general position tangency with $\partial_{x_{3}}$ is given by the maps $u_{\pm;t}\colon H\to\C^{3}$, where $H$ is the upper half plane and $t\in(-\epsilon,\epsilon)$,
\[ 
u_{\pm;t}(z)=(z^{2},z(z^{2}+t),\pm z).
\]
For $t<0$ the projection of $u|_{\partial H}$ to the $(x_{1},x_{2})$ has a double point at $z=\pm\sqrt{-t}$ that contributes to linking according to the sign of $u_{\pm;t}$. At $t=0$ the boundary has a tangency with $\partial_{x_{3}}$ and at $t>0$, $u_{\pm}(H)$ intersects $C^{\pm}$ at $u(i\sqrt{t})$ with the sign that agrees with the liking sign before the tangency.
\end{proof}

\begin{lemma}\label{l : indep of vector field}
For a map $u\colon (S, \partial S) \to (X,L)$ which is holomorphic along the boundary and transverse to $L$, the framing of $u(\partial S)$ plus 
$u \OpenHopf L$ is independent of the choice of $\xi$ and metric $g$. 
\end{lemma} 
\begin{proof}
Any two vector fields can be connected by a family of vector fields such that the zeros of the vector fields
never meet $u(\partial S)$.  The remaining possible failures of the conditions of Definition \ref{transversetolag} 
look exactly like those already considered in Lemma \ref{l:throughgamma} and Lemma \ref{l:tangency}, save that 
we are moving the vector field rather than moving the map. 
\end{proof}

\begin{corollary}
If $u\colon (S,\partial S)\to (X,L)$ is a map transverse to $L$ and holomorphic on the boundary then 
\begin{equation}\label{eq: rep ws skein}
a^{u\OpenHopf L}\cdot \langle \partial u \rangle\in \Sk(L),
\end{equation}
where $\langle \partial u \rangle$ is the element in the skein represented by the oriented link $u(\partial S)$ framed by $\xi$, is invariant under isotopies of $u$ through maps that are holomorphic on the boundary and transverse to $L$ and is independent of $\xi$.
\end{corollary}

\begin{proof}
    Follows from Lemmas \ref{l : intersections without shifting}, \ref{l:throughgamma}, \ref{l:tangency}
\end{proof}

\begin{remark}
    Note that neither factor in the left hand side of \eqref{eq: rep ws skein} alone is deformation invariant but their product is. 
\end{remark}

In case $L$ is disconnected, we can sometimes separate out the self-linking correction  
$\partial u \OpenHopf \gamma$ into individual contributions from the separate branes. 
Indeed if $L = \bigcup L_i$ and correspondingly $C = \bigcup C_i$, we may 
consider the 1-chains $\gamma_{ij} = (C_i - \Delta_{\pm\xi_i}) \cap L_j$.  As the Lagrangians $L_i$ 
are disjoint, each $\gamma_{ij}$ is a cycle.  Admissibility means that $\gamma = \sum_{i,j} \gamma_{ij}$ is a boundary; 
separating out into components, it means that $\sum_j \gamma_{ij}$ is a boundary.  However we could ask
that in fact each $\gamma_{ij}$ was already a boundary.  In this case it makes sense to write

\begin{equation} \label{eq:multilinking} 
u \OpenHopf L_i  := u_{J\nu} \cdot C_i +  \sum_j \partial u \OpenHopf \gamma_{ij} 
\end{equation}

The virtue of this convention is that $u \OpenHopf L = \sum u \OpenHopf L_i$.  (A defect of it is that 
if one viewed $(L_i, f|_{L_i}, g|_{L_i}, C_i)$ itself as a Lagrangian brane, 
then one would have the incompatible formula $u \OpenHopf L_i =  u_{J\nu} \cdot C_i + \partial u \OpenHopf \gamma_{ii}$.  
Nevertheless no confusion should arise.)  
The argument of Lemma \ref{l:throughgamma} applies also to these numbers. 

\begin{rmk}
We will consider both compact and non-compact Lagrangians and $4$-chains. 
In the non-compact cases we consider, outside a compact set our geometric objects are products with a half line.  We take our 4-chains and bounding chains for $\gamma$ as cycles with closed but possibly non-compact support. 

The question of whether $L$ and $\gamma$ bound then concerns Borel-Moore, rather than ordinary, homology.  
Recall that Borel-Moore homology can be calculated by the sequence 
$$
0 \to \mathrm{Ext}^1(H^{i+1}_c(X, \Z)) \to H_i^{BM}(X, \Z) \to \Hom(H^i_c(X, \Z), \Z) \to 0.
$$
In particular if $L$ is a handlebody, then $H_1^{BM}(L, \Z) = 0$, so a bounding chain for $\gamma$ can always
be found.  The difference between two choices of bounding chains lives in $H_2^{BM}(L, \Z) \cong \Z^{g}$, which is 
generated e.g.~by disks filling an appropriate disjoint collection of circles on $\partial L$. 
\end{rmk}

\begin{remark}
    In \cite{scharitzer-shende-mirror-cpgl}, it is explained how the theory developed in the present article can be generalized to Lagrangians not satisfying the admissibility condition by incorporating `linking lines' into the skein. 
\end{remark}

\section{Local models for crossings and nodal curves}
\label{sec:models}

In this section we present the relevant local models for
the wall-crossing instances in generic 1-parameter families.  

We use coordinates $(z_{1},z_{2},z_{3})=(x_{1}+iy_{1},x_{2}+iy_{2},x_{3}+iy_{3})$ on $\C^{3}$. The Lagrangian $L$ corresponds to $\R^{3}=\{y_{1}=y_{2}=y_{3}=0\}$, the vector field $\xi$ is $\xi=\partial_{x_{3}}$, and the 4-chain $C$ is 
\[ 
C= \{(z_{1},z_{2},z_{3})\colon y_{1}=y_{2}=0,y_{3}\ge 0\}\cup
\{(z_{1},z_{2}, z_{3})\colon y_{1}=y_{2}=0,y_{3}\le 0\},
\] 
where the two pieces are both oriented so that they induce the positive orientation on $\R^{3}$.

\subsection{Hyperbolic boundary}\label{ssec: hyp boundary}
We will consider a 1-parameter family of maps from a disjoint union of two half-disks. Let $D_+=\{z\in\C\colon |z|>1,\;\mathrm{Im}(z)\ge 0\}$. Consider the $1$-parameter family of maps $u_t^k\colon (D_+,\partial D_+)\to (\C^3,\R^3)$,
\[
u_t^1(z)=(z,z,t),\quad u_t^{2}(-z,z,0),\qquad -\delta\le t\le\delta.
\]

We write $u_t^{\,\mathrm{hyp}}=u_t^1(D_+)\cup u_t^2(D_+)$ for the image of these two maps. Note that if we give the boundary of the half-disks the boundary orientation then $\partial u_{\pm t}^{\,\mathrm{hyp}}$, $t>0$ gives two curves isotopic to those depicted on the left hand side of the skein relation in the top equation of Figure \ref{fig:skeinrel}.

We consider an additional family of maps. Let $r\in [0,\delta)$ and consider the half
\begin{equation}\label{eq : domain Da hyp}
D_r\subset \{x^2-y^2=r\}\cap\{|x|^2+|y|^2<1\}\subset \C^2,
\end{equation}
of the curve that induces the orientation of the boundary with positive $y$-component. Note that $D_r$ is a strip for $r>0$ and a strip with a hyperbolic boundary node for $r=0$ and that $\partial D_r\subset\R^2$.
Consider then the following additional family of maps from a strip, 
\begin{equation}\label{eq:realnode}
v_{r} \colon (D_r,\partial D_r) \to  (\C^{3},\R^3)=(\C^2\times\C,\R^2\times\R),\quad 
v_r(z)=(z,0).
\end{equation}
This is a model family for Gromov convergence to a map $v_0$ whose domain is a nodal curve, and whose normalization is given by ${u}_0^1\sqcup u_0^2$. Let $v_r^{\mathrm{hyp}}=v_r(D_r)$ denote the image. 
Then the boundary $\partial v_r^{\mathrm{hyp}}$ is a curve in the $x_1x_2$-plane in $\R^3$, isotopic to the curve giving the right hand side of the skein relation in the top equation of Figure \ref{fig:skeinrel}. 

\begin{definition} \label{standard hyperbolic degeneration}
    Let $L \subset X$ be a Lagrangian in a symplectic 6-manifold, and fix $p \in L$.  Suppose given families of subsets $A_\alpha \subset X$ for $\alpha \in (-\epsilon, \epsilon)$ and $B_\beta \subset X$ for $\beta \in [0, \epsilon')$.  We say that this pair of families is a standard boundary crossing/hyperbolic degeneration near $p$ if there are strictly increasing functions $r, s\colon \R \to \R$ with $r(0) = 0 = s(0)$  
    and $C^1$-paths in the Banach manifold of $C^1$ diffeomorphisms:
    $$\phi_\alpha,\psi_\beta\colon \mathrm{Nbhd}(p) \to  \mathrm{Nbhd}(0) \subset \C^3,\qquad \phi_0=\psi_0,$$
    such that the following holds:
    \begin{align*}
    \phi_\alpha(L) = \R^3,\; (d\phi_\alpha\circ J - J_0\circ d\phi_\alpha)|_L=0,&
    \quad \psi_\beta(L) = \R^3,\; (d\psi_\beta\circ J - J_0\circ d\psi_\beta)|_L=0,\\
    \phi_\alpha(A_\alpha) = u_{r(\alpha)}^{\,\mathrm{hyp}}, &\quad \psi_\beta(B_\beta) = v_{s(\beta)}^{\mathrm{hyp}}.
    \end{align*}
\end{definition}

\begin{remark}\label{r : hyp node as node}
   If the family $v_r\colon (D_r,\partial D_r)\to (\C^3,\R^3)$ is doubled (by Schwarz reflection) we obtain a family $w_r\colon C_r\to\C^3$ which is nodal at $r=0$ with a 1-dimensional fixed point locus.
\end{remark}

\subsection{Elliptic boundary}\label{ssec: ell boundary}
Consider the family of maps from a disk $D=\{z\in \C\colon |z|<1\}$
\[
u_{t}\colon D  \to  \C^{3},\quad u_t(z)=(z,iz,it),\qquad -\delta\le t\le \delta.  
\]
We write $u_t^{\,\mathrm{ell}}=u_{t}(D)$ for the image of the map.
The intersection $u_{t}^{\,\mathrm{ell}}\cap C$ is the point $(0,0,it)$. Note that  $\partial u_t^{\,\mathrm{ell}}$ gives a curve (without boundary) that intersects $C$ as in the left hand side of the skein relation in the second equation of Figure \ref{fig:skeinrel}.

We consider an additional family of maps. Let $r\in [0,\delta)$ and consider the half
\begin{equation}\label{eq : domain Da ell}
D_r\subset \{x^2+y^2=r\}\cap\{|x|^2+|y|^2<1\}\subset \C^2,
\end{equation}
of the curve that induces the positive orientation of the boundary circle in $\R^2$. Note that $D_r$ is a cylinder for $r>0$ and a cylinder with an elliptic boundary node (boundary circle of radius zero) for $r=0$.
Consider then the following additional family of maps from a cylinder, 
\begin{equation}\label{eq:imagnode}
v_{r} \colon (D_r,\partial D_r) \to  (\C^{3},\R^3)=(\C^2\times\C,\R^2\times\R),\quad 
v_r(z)=(z,0).
\end{equation}
This is a model family for Gromov convergence to a map $v_0$ whose domain is a nodal curve, and whose normalization is given by ${u}_0$. Let $v_r^{\mathrm{ell}}=v_r(D_r)$ denote the image. 
Then the boundary $\partial v_r^{\mathrm{ell}}$ is a curve in the $x_1x_2$-plane in $\R^3$, isotopic to the curve giving the right hand side of the skein relation in the second equation of Figure \ref{fig:skeinrel}.

\begin{definition}\label{standard elliptic degeneration}
    Let $L \subset X$ be a Lagrangian in a symplectic 6-manifold, and fix $p \in L$.  Suppose given families of subsets $A_\alpha \subset X$ for $\alpha \in (-\epsilon, \epsilon)$ and $B_\beta \subset X$ for $\beta \in [0, \epsilon')$.  We say that this pair of families is a standard Lagrangian crossing/elliptic degeneration near $p$ if there are strictly increasing functions $r, s\colon \R \to \R$ with $r(0) = 0 = s(0)$ and $C^1$-paths in the Banach manifold of $C^1$ diffeomorphisms:
    $$\phi_\alpha,\psi_\beta\colon \mathrm{Nbhd}(p) \to  \mathrm{Nbhd}(0) \subset \C^3,\qquad \phi_0=\psi_0,$$
    such that the following holds:
    \begin{align*}
    \phi_\alpha(L) = \R^3,\; (d\phi_\alpha\circ J - J_0\circ d\phi_\alpha)|_L=0,&
    \quad \psi_\beta(L) = \R^3,\; (d\psi_\beta\circ J - J_0\circ d\psi_\beta)|_L=0,\\
    \phi_\alpha(A_\alpha) = u_{r(\alpha)}^{\,\mathrm{ell}}, &\quad \psi_\beta(B_\beta) = v_{s(\beta)}^{\,\mathrm{ell}}.
    \end{align*}
\end{definition}

\begin{remark}\label{r : ell node as node}
   If the family $v_r\colon (D_r,\partial D_r)\to (\C^3,\R^3)$ is doubled (by Schwarz reflection) we obtain a family $w_r\colon C_r\to\C^3$ which is nodal at $r=0$ with a 0-dimensional fixed point locus.  
\end{remark}

\section{Boundary geometry of Floer gluing} \label{sec: gluing}

In this section we show that, under appropriate transversality hypotheses, moduli of $J$-holomorphic curves are standard near crossings/nodal moments. 

\subsection{Families of almost complex structures and transverse crossings}
We recall the expression for the linearization of the Cauchy-Riemann operator. 
Fix a symplectic manifold $X$, a Lagrangian $L$, and a metric $g$ on $X$ for which $L$ is totally geodesic. 

Consider a 1-parameter family $J_t$ of almost  complex structures on $X$. Note that the $t$-derivative of $J_t$ is an endomorphism $\dot J_t\colon TX\to TX$ such that $\dot J_t\circ J_t + J_t\circ\dot J_t=0$.  

Let $(S, \partial S)$ be a Riemann surface with area form $\omega_S$ and complex structure $j$. We write $\End^{0,1}(TS) \subset \End(TS)$ for the bundle  of  endomorphisms $\gamma\colon TS\to TS$ such that $j\circ\gamma+\gamma\circ j=0$. 
These give the first-order variations of the complex structure on $S$, as $(j + \epsilon \gamma)^2 = O(\epsilon^2)$. 

Consider the connection $\nabla^S$ of the Riemannian metric $\omega_S(\cdot,j\cdot)$ and fix a minimal number of such variations $\gamma$ that span the cokernel of the operator $\zeta \mapsto \nabla^S\zeta+j\circ\nabla^S \zeta \circ j$ acting on vector fields $\zeta$ along $S$ that are tangent to the boundary. Let $\mathrm{Def}(S)\subset \End^{0,1}(TS)$ denote the space spanned by the variations $\gamma$.

Let $u\colon (S,\partial S)\to (X,L)$ be a solution to the Cauchy-Riemann equations $\bar\partial_J u=0$. 
Let $\nabla$ denote the connection on the bundle $u^\ast TX$ pulled back from the metric connection on $X$. 

The linearization of the $J_0$-Cauchy-Riemann operator at $(u,S)$ is the map 
$$
 L_{(u,S)}\bar\partial_{J_0}: 
    H^3(S, u^* TX)  \times \mathrm{Def}(S) \times \R \to H^2(S, \Hom^{0,1}(TS, u^* TX))
$$
sending 
\begin{equation}
    \label{eq : linearization C-R operator}
    (v, \gamma, \epsilon) \ \mapsto \ (\nabla v + J_0\circ\nabla v\circ j)
- \tfrac12\left(\nabla_v J_0 +\epsilon \dot J_0\right)\circ\partial_{J_0} u\circ j - \tfrac12\partial_{J_0} u\circ j\gamma.
\end{equation}

\begin{definition} \label{Jt transverse crossing}
    Let $X$ be a symplectic 6-manifold with $c_1(X)=0$, $L \subset X$ a Maslov zero Lagrangian submanifold, and $J_t$ a 1-parameter family of almost complex structures.  Let $u\colon(S, \partial S) \to (X, L)$ be a $J_0$-holomorphic map with smooth domain curve. 
    Assume the (Fredholm index $1$) linearized operator $L_{(u,S)}\bar\partial_J$ (of the $t$-parameterized operator $\bar\partial_{J_t}$) has a $1$-dimensional kernel generated by a solution $(v,\gamma,\epsilon)$ with $\epsilon\ne 0$.      
    We say that $(u,S)$ is a $J_t$-transverse crossing if either: 
    \begin{itemize}
    \item (Hyperbolic crossing.) The map $u$ is an immersion and fails to be an embedding solely at the points $s_1, s_2 \in \partial S$, $u(s_1) = p = u(s_2)$.  Moreover, the three vectors $\partial_\tau u(s_1)$, $\partial_\tau u(s_2)$, and $v(s_1)-v(s_2)$ are linearly independent in $T_p L$. 

    In this case we write $(u^{\bullet},S^\bullet)$ for the map whose domain is the nodal curve obtained from $S$ by identifying $s_1$ and $s_2$.
    
    \item (Elliptic crossing.) The map $u$ is an embedding with interior disjoint from $L$ except at a single interior point $s\in S$, $u(s) = p$, such that $du(T_s S)\cap T_p L=0$.  Moreover, $du(T_s S)+T_p L+ \R v(s)= T_p X$.

    In this case we write $(u^{\bullet},S^\bullet)$ for the map whose domain is the nodal curve obtained from $u$ by viewing $s$ as an elliptic node. 
\end{itemize}
\end{definition}

\begin{theorem}\label{t : wall crossing gluing} 
Let $L \subset X$ be a Maslov zero Lagrangian in a symplectic $6$-manifold with $c_1(X)=0$, and let $J_t$ be a 1-parameter family of almost complex structures.  Let $u\colon (S, \partial S) \to (X, L)$ be a $J_0$ holomorphic map which is a $J_t$-transverse crossing in the sense of Definition \ref{Jt transverse crossing}, and let  $u^\bullet$ be the associated map from the corresponding nodal domain.  We write $\mathcal{M}(X, L, J) = \bigsqcup_t \mathcal{M}(X, L, J_t)$ for the set of all $J_t$ holomorphic maps; we endow this set with the topology of Gromov convergence. Then the following hold. 
\begin{itemize}
\item[$(a)$] There is a neighborhood $u \in \Omega(u) \subset \mathcal{M}(X, L, J)$ so that, for all $t$ with $|t|$ sufficiently small, $\Omega(u) \cap \mathcal{M}(X, L, J_t)$ consists of a single map $u_t$, which moreover is an embedding for $t\ne 0$. 
\item[$(b)$]    
There is a neighborhood
$v \in \Omega(v) \subset \mathcal{M}(X, L, J)$ so that, for all $t \ne 0$ with $|t|$ sufficiently small, 
there is at most one $J_t$-holomorphic map in $\Omega(v) \cap \mathcal{M}(X, L, J_t)$, which is moreover an embedding.  
\item[$(c)$] Let $A_t$ denote the family of images of the maps $u_t$ in $(a)$. Let $B_t$ be the image of map $v_t$ in $(b)$ if there exists such a map at $t$, or, otherwise, the empty set. 
Then $(X, L, p, A_t, B_t)$ is a standard boundary crossing/hyperbolic degeneration or a standard Lagrangian crossing/elliptic degeneration, in the sense of Definition \ref{standard hyperbolic degeneration} or Definition \ref{standard elliptic degeneration}, respectively. 
\end{itemize}
\end{theorem}

Theorem \ref{t : wall crossing gluing} will be proved in the following sections.  
The steps are as follows. In Section \ref{ssec : coord nbh of maps} we fix a standard family of maps which will include all holomorphic maps near the $J_t$-transverse solution $(u,S)$ and its associated nodal curve $(u^\bullet,S^\bullet)$, and on which we can run the usual arguments around Floer gluing. 
We also describe the kernel of the linearized Cauchy-Riemann operator at $(u,S)$ and $(u^\bullet,S^\bullet)$ on these spaces. In Section \ref{ssec : familes of sols} we study 1-parameter families of solutions. In Section \ref{sssec : sols hc and ec} we find unique 1-parameter family of solutions $(u_t,S_t)$ near $u=(u_0,S_0)$. In Sections \ref{sssec : pre glue hyp} and \ref{sssec : pre glue ell}, we define ``pre-glued'' families of maps $(u^\bullet_r,S^\bullet_r)$, such that $(u^\bullet_0,S^\bullet_0)=(u^\bullet,S^\bullet)$, and for which the pair of families $(u_t,S_t)$ and $(u_r^\bullet, S_r^\bullet)$ are manifestly standard in the sense of Definitions \ref{standard hyperbolic degeneration} or \ref{standard elliptic degeneration}.  We check that this remains true if  $(u^\bullet_r,S^\bullet_r)$ is replaced by any family sufficiently close to it in an appropriate norm. Finally, in Section \ref{sssec : sols hn en} we show that if we apply Floer gluing to the pre-glued family we indeed produce a family of solutions sufficiently close to $(u^\bullet_t,S^\bullet_t)$, which then implies Theorem \ref{t : wall crossing gluing}. 

In Section \ref{sssec : fine points}, we describe the second order germ of the 1-parameter families of solutions near nodal curves.

\subsection{Coordinates on neighborhoods of maps}\label{ssec : coord nbh of maps}

\subsubsection{Metric and coordinate choices}\label{ssec : exp map and J}

\begin{lemma} \label{lagrangian adapted metric} \cite[Lemma 5.6]{EESLCHRn}  
      Let $(X, \omega)$ be a symplectic manifold with compatible almost complex structure $J$.  Let $L\subset X$ be a Lagrangian.  
      Then there is a metric $g$ on $X$ such that $g= \omega(J\cdot, \cdot)$ on $L$, 
      
    such that $L$ is totally geodesic for $g$, and with the property that if $V$ is a Jacobi field along a geodesic in $L$, then so is $J\cdot V$. 
\end{lemma}

Note that we previously asked only that $L$ was totally geodesic; the condition on Jacobi fields is newly added.
We choose such metric in order that our boundary conditions will take a standard Dirichlet/Neumann form in local coordinates.  

For a metric $g$ on $X$, we write 
$\nabla$ for the Levi-Civita connection of $g$ and $\exp$ for its exponential map.

\begin{lemma}[Lemma 3.2 in \cite{EESPxR}]\label{l : Jacobi field metric} 
    In the situation of Lemma \ref{lagrangian adapted metric}, if $w\colon(S,\partial S)\to (X,L)$ is $J$-holomorphic on the boundary, if $v$ is a section of $w^\ast(TX)$ which is tangent to $L$ along $\partial S$ and such that $(\nabla v - J \circ \nabla v \circ j)|_{\partial S}=0$, and if $\exp$ denotes the exponential map in the metric of Lemma \ref{lagrangian adapted metric}, then also  $\exp_w(v)\colon (S,\partial S)\to (X,L)$, $\exp_w(v)(z)=\exp_{w(z)}(v(z))$ is $J$-holomorphic along the boundary.
\end{lemma}

We will also later want local coordinates with the following property: 
\begin{lemma} \label{local coordinates}
    Let $(X, \omega)$ be a symplectic manifold, $J$ a compatible almost complex structure, $L \subset X$ a Lagrangian,  and $p \in L$ a point.  
Consider also $\C^n$ with the standard symplectic form $\omega_{\mathrm{std}}$ and complex structure $I$.  Then there is a local symplectomorphism
    $\psi\colon (\C^n, 0) \to (X, p)$
    such that $\psi(\R^n) = L$ and 
    \begin{equation}\label{eq: J close to standard}
\psi^\ast(J)(0)=I,\quad |\psi^\ast(J)(x)- I|=\Ordo(|x|^2),\quad x\in\R^6.
\end{equation}
\end{lemma}
\begin{proof}
Identifying $(\C^n,\R^n)$ with $(T_p X,T_pL)$ we find $\alpha\colon (\C^n,\R^n)\to (X,L)$ such that $\alpha^\ast J(0)=I$. Write $(u,v)\in\R^n\times\R^n$ for $u+Iv\in\C^n$. Taylor expanding $J(u,v)=\alpha^\ast J$ we find
\[
J(u,v) = I + A_1 u+ A_2 v + \mathcal{O}(|u|^2+|v|^2),
\]
where $A_k I=I A_k$, $k=1,2$, and hence
\[
A_k=\left(\begin{matrix}a_k  & b_k\\ b_k & -a_k\end{matrix}\right).
\]
Define new coordinates $(u,v)=\beta(x,y)$ by
\begin{align*}
u_s &= x_s - \tfrac12\sum_t (a_1^{st}+b_2^{st})y_t^2\\
v_s &= y_s +\sum_t a_1^{st}x_ty_t + \tfrac12\sum_t (a_1^{st}x_t^2 + b_1^{st} y_t^2). 
\end{align*}
One verifies $d\beta^{-1}J(u,v)d\beta=I+\mathcal{O}(|x|^2+|y|^2)$ and $\beta(\R^n)=\R^n$. Taking 
$\psi=\alpha\circ\beta$ the lemma follows.
\end{proof}

Below we will consider $1$-parameter families $J_t$ of almost complex structures around $J_0$ and we will fix a family of diffeomorphisms as in Lemma \ref{local coordinates} for $J_0$, that will use throughout Sections \ref{sssec : hc}, \ref{sssec : hn}, \ref{sssec : ec}, and \ref{sssec : en}:

\begin{lemma}\label{l : diffeomorphisms for J}
Consider a cotangent bundle $T^\ast L$ and let $J_t$, $t\in[0,1]$, be a $1$-parameter family of almost complex structures compatible with the standard symplectic form on $T^\ast L$. Then there is a family of diffeomorphisms $\psi_t$ of $T^\ast L$ such that
\begin{equation}\label{eq : Jcomplex along L}
\psi_t|_L=\mathrm{id},\quad (J_t\circ d\psi_t- d\psi_t\circ J_0)|_L=0,
\end{equation}
and such that $\phi_t=\mathrm{id}$ outside a neighborhood of the zero section.
\end{lemma}

\begin{proof}
A Riemannian metric on $L$ gives an isomorphism between the tangent and cotangent bundles of $L$. The tangent bundle $TL$ carries a canonical complex structure $J_{\mathrm{can}}\colon T(TL)|_L\to T(TL)|_L$ along the zero section that takes a tangent vector $v$ to $L$ to itself viewed as a vector tangent to the fiber. If $J$ is an almost complex structure on $TL$, define the map $\phi_J\colon TL\to TL$, $\phi_J(x,v)=(x,Jv)$ in a neighborhood of the zero section and interpolate to the identity in a slightly larger neighborhood. Then along $L$, $J\circ d\phi_J=d\phi_J J_{\mathrm{can}}$. To see that lemma holds take $\psi_t=\phi_{J_t}\circ\phi_{J_0}^{-1}$.
\end{proof}

Consider now a Lagrangian $L\subset X$ and a family of almost complex structures $J_t$ on $X$. Let $N\subset X$ be a neighborhood of $L$ identified with a neighborhood of $L$ in $T^\ast L$. Then Lemma \ref{l : diffeomorphisms for J} gives a diffeomorphism 
\begin{equation}\label{eq : diffeo psi_t}
\psi_t\colon X\to X, \quad \psi_t=\mathrm{id}|_{X\setminus N},
\end{equation}
such that \eqref{eq : Jcomplex along L} holds. It follows in particular that if $w\colon (T,\partial T)\to (X,L)$ is $J_0$-holomorphic along the boundary then $\psi_t\circ w$ is $J_t$-holomorphic along the boundary.

\subsubsection{Neighborhoods of hyperbolic crossing curves}\label{sssec : hc}
Let $u\colon (S, \partial S) \to (X, L)$ be any $J$-holomorphic map  from a Riemann surface. 
We write $H^3(S,u^\ast(TX))$ for 
the Sobolev space of vector fields with 3 derivatives in $L^2$ that satisfy the boundary condition 
\begin{equation}\label{eq : boundary conditions sections}
v|_{\partial S}\in u^\ast(TL),\quad (\nabla v-J\circ\nabla\circ j)|_{\partial S}=0.
\end{equation}
We write $V_{\mathrm{con}}$ for the space of conformal variations of $S$, i.e., a neighborhood of $S$ in moduli (after stabilizing, if the original curve has unstable domain). 

Given 
$(v, \kappa, t) \in H^3(S,u^\ast(TX))\times V_{\mathrm{con}}\times (-\delta,\delta)$, 
we define a map $u_{v, \kappa, t}\colon (S,\partial S)  \to  (X,L)$,
\begin{equation}\label{hc before puncturing parameterization}
 z \mapsto  \psi_t(\exp_{u(z)}(v(z))),
\end{equation}
where $\exp$ is the exponential map of the Riemannian metric of Lemma \ref{l : Jacobi field metric} and $\psi_t\colon X\to X$ is the diffeomorphism in \eqref{eq : diffeo psi_t}.

It is clear that $v\mapsto \psi_t(\exp_u(v))$ gives a diffeomorphism between the maps in $H^3(S,u^\ast(TX))$ that satisfy the boundary conditions and a neighborhood of the map $u$ in the subset of $H^3(S,X)$ that satisfy the boundary condition. It follows in particular that any $J_t$-holomorphic map near $u$ is of the form $u_{v, \kappa, t}$ for a unique $(v, \kappa, t)$.   

Recall that we formulated the notion of `transverse crossing' (Definition \ref{Jt transverse crossing}) assuming the linearization $L_{(u,S)}(\bar\partial_J)$ (given by the formula in \eqref{eq : linearization C-R operator}) has one dimensional kernel and asking that if $(v,\kappa,k\partial_t)$ is an element in the kernel, then 
\[
du (T_{s_1}\partial S) + du (T_{s_1}\partial S)+ \R\cdot (v(s_1)-v(s_2))= T_pL.
\] 

We will now translate this hypothesis into a parameterization of maps suitable to study gluing. As a first step we add boundary marked points $\zeta_1$ and $\zeta_2$ to the map near the intersection point. Let $S^\star$ denote the corresponding domain. The resulting maps are then parameterized by 
\[
H^3(S,u^\ast(TX))\times V_{\mathrm{con}}\times V_{\mathrm{con}}^\star \times (-\delta,\delta),
\]
where  $V_{\mathrm{con}}^\star=a_1\times a_2$, where $a_j$ are arcs in the boundary centered at $s_j$ and $\zeta_j\in a_j$, $j=1,2$, gives the locations of the marked points. The local coordinate map is the same as before, \eqref{hc before puncturing parameterization} and there are natural evaluation maps $\ev_k(u_{\kappa,v,t})=u_{\kappa,v,t}(\zeta_k)\in L$, $k=1,2$. The solution space now gains two dimensions since we may move the marked points along the boundary and the kernel of the linarization $L_{(u,S^\star)}\bar\partial_J$ at the solution with $u(\zeta_1)=u(\zeta_2)$ is the three-dimensional space spanned by:
\begin{equation}\label{eq : kernel marked hc}
(v,\kappa,\partial_t),\quad  m_1\in T_{s_1}a_1\subset TV^\star_{\mathrm{con}},\quad m_2\in T_{s_2}a_2\subset TV^\star_{\mathrm{con}}, 
\end{equation}
where the first tangent vector is the previous kernel element.

We next replace the marked points by punctures in $\partial S$ at $\zeta_1$ and $\zeta_2$ in a neighborhood of $\zeta_j=s_j$, with strip neighborhoods $[0,\infty)\times[0,1]\subset S$ where the puncture is at $\infty$. Let $S^\circ$ denote the corresponding punctured domain. 
Assume that the punctures are sufficiently close to $s_j$ and that the strip neighborhoods are sufficiently small so that their images lie in the local coordinate system of Lemma \ref{local coordinates}. In these coordinates, the boundary condition along $[0,\infty)\times\{k\}$, $k=0,1$ is $\R^3\subset\C^3$ (for both boundary components). 

We write 
\[
\mathbf{V}_{\mathrm{hc}} \ := \ H^{3,\delta}(S^\circ, u^\ast(T^\ast X))
\] 
for the weighted Sobolev space of vector fields with three derivatives in $L^2$, which satisfy the boundary condition \eqref{eq : boundary conditions sections}, with norm given by
\begin{equation}
\|v\|^2 \ = \ \sum_{k=0}^{3}\int_{S^\circ} |d^{(k)}v|^2 w_{\delta} \,dA
\end{equation}
Here  $w_\delta\colon S^\circ\to [1,\infty)$ is some fixed function which takes the value $1$ in the complement of the strip neighborhoods of the punctures and is $e^{\delta|\sigma|}$ in coordinates $(\sigma,\tau)\in[0,\infty)\times[0,1]$ near the punctures, where $0<\delta<\pi$. 

Since the weight function is exponentially growing, vector fields $v\in \mathbf{V}_{\mathrm{hc}}$ decay at the puncture, and hence $\exp_u(v)$ takes punctures to the origin of $\R^3$. We vary the image of punctures by using cut-off constant solutions (sometimes called asymptotic constants). These are defined as follows. 

Let $\hat c\in\R^3$, and let $\beta\colon [0,\infty)\times[0,1]\to [0,1]$ be a cut off function where $\beta=1$ in $[3,\infty)\times [0,1]$, $\beta=0$ outside $[2,\infty)\times [0,1]$. Consider $[0,\infty)\times[0,1]\subset S^\circ$ as the strip neighborhood of $\zeta_k$, $k=1,2$, and
extend $\beta$ to all of $S^\circ$ by $0$. Assume that $\hat c$ is sufficiently small that it is contained in the coordinate chart of Lemma \ref{local coordinates} and define $c_k\colon (S^\circ,\partial S^\circ)\to (\C^3,\R^3)\subset (X,L)$, $k=1,2$:
\begin{equation}\label{eq : local solution hyp}
c_k(z)= 
\begin{cases}
0 & z\in S^\circ\setminus[0,\infty)\times[0,1],\\
\beta(z)\hat c & z\in [0,\infty)\times[0,1].
\end{cases}
\end{equation}

We define  $V_{\mathrm{sol}}$ as the space of maps $(c_1,c_2)$ as in \eqref{eq : local solution hyp} constructed from vectors $(\hat c_1,\hat c_2)$ in $U\times U$, where $U$ is a small neighborhood of the origin in $\R^3$. Then $V_{\mathrm{sol}}=U\times U$, where the isomorphism takes $(c_1,c_2)$ to its pair of asymptotic constants, and $TV_{\mathrm{sol}}= \R^3\times \R^3$.  


Also the conformal structure on $S^\circ$ varies. We think of these as changes of the complex structure $j$ on $S^\circ$. These are of two types, variations on $S$ itself and variations corresponding to moving the punctures. To keep the boundary conditions we take variations that are conformal along the boundary. We pick local coordinates
\[
V_{\mathrm{con; hc}} = V_{\mathrm{con}} \times V^\circ_{\mathrm{con}}. 
\]
Here $V^\circ_{\mathrm{con}}$ corresponds to moving the location of the punctures. 

From this data we may describe a family of maps near $(u,S)$: 
\begin{equation}\label{eq : loc coord bc}
\mathbf{V}_{\mathrm{hc}} \times V_{\mathrm{sol}} \times V_{\mathrm{con; hc}}\times (-\delta,\delta),\quad (v,c,\kappa,t)\mapsto (\psi_t(\exp_{u}(v)+c), S^\circ_\kappa).
\end{equation}
Here if $c=(c_1,c_2)$ and if $([0,\infty)\times[0,1])_k$ denotes the strip neighborhood of $\zeta_k$ then
\begin{equation}\label{eq : local solution}
\exp_{u(z)}(v(z))+c(z)= 
\begin{cases}
\exp_{u(z)}(v(z)) & z\in S^\circ\setminus\bigcup_{k=1,2}([0,\infty)\times[0,1])_k,\\
\exp_{u(z)}(v(z))+ c_k(z) & z\in ([0,\infty)\times[0,1])_k, k=1,2,
\end{cases}
\end{equation}
where `$+$' in the second row refers to addition in the local coordinates $(\C^3,\R^3)$.

We denote the linearization of the Cauchy-Riemann operator in these coordinates by:
\[
L_{(u,S^\circ)}\bar\partial_J\colon \ \mathbf{V}_{\mathrm{hc}} \times V_{\mathrm{sol}} \times T_{[S^\circ]} V_{\mathrm{con; hc}}\times \R  \ \to \ 
H^{2,\delta}(S^\circ; \mathrm{Hom}^{0,1}(TS^\circ, u^\ast(TX))).
\]

\begin{remark}
   As usual, the reason to use the weighted Sobolev space $H^{3, \delta}$ is that $\bar \partial_J$ would not be Fredholm on the corresponding $H^3$.  Imposing the weight forces the elements of $H^{3, \delta}$ to vanish at infinity, which we must correct for by re-introducing the translations along the Lagrangian, i.e., $V_{\mathrm{sol}}$.  The space $V_{\mathrm{con}; \mathrm{hc}}^\circ$ plays a similar role: the vector fields on $S$ which infinitesimally move the punctures can be choosen as cut-off vector fields of the form $\bar\partial (\beta\zeta)$, where $\beta$ are cut-off functions as above and where $\zeta$ is a holomorphic vector field in the strip region with exponential growth, $\zeta(\sigma,\tau)= b e^{\pi(\sigma+i\tau)}$, where $b\in\R$.  These do not satisfy the $\delta$ decay condition, and so it is natural to discuss them separately.  
\end{remark}

There is a continuous linear map $H^3(S^\star,u^\ast(TX))\to H^{3,\delta}(S^\circ,u^\ast(TX))$ which pre-composes a vector field $v$ with the change of coordinates $[0,1]\times[0,\infty]\to H$, $(\sigma,\tau)\to e^{-\pi(\sigma+i\tau)}$, see Lemma \ref{l:H3toH3delta}.
The map in particular takes the 3-dimensonal kernel of $L_{(u,S^\circ)}\bar\partial_J$ into the 3-dimensional kernel of $L_{(u,S^\star)}\bar\partial_J$ which allows us to determine the latter from \eqref{eq : kernel marked hc} as follows. First, the image of $(v,\kappa,\partial_t)$ is
\[
(v,\kappa,\partial_t)+b_1+b_2,
\]
where $b_k\in TV_{\mathrm{sol}}^\circ\approx \R^3$ is given by $\partial_t\ev_k\in \R^3$, $k=1,2$. Second the image of $m_k$ is $\gamma_k+c_k$, where $\gamma_k\in TV^\circ_{\mathrm{con}}$ corresponds to the linearized movement of the puncture determined by $m_k$ and where $c_k\in TV_{\mathrm{sol}}^\circ$ is given by $\partial_{m_k}\ev_k\in\R^3$. 
By the transverse crossing assumption, $c_1$, $c_2$, and $b_1+b_2$ in $\R^3\approx T_pL$ are then linearly independent.

Thus, the subspace of $\mathbf{V}_{\mathrm{hc}}\oplus V_{\mathrm{con; hc}}$ spanned by 
\begin{equation}\label{eq : approx kernel bc}
\gamma_1+c_1 \ , \ \gamma_2 + c_2 \ , \ \partial_t + b_1 + b_2,     
\end{equation}
is an approximate kernel in the sense that $L_{(u,S^\circ)}\bar\partial_J$ is invertible on its $L^2$-complement. 

\begin{remark}
For $k\ge 4$, the above assertion regarding kernel elements follows by commutativity of the diagram
\[
\begin{CD}
H^k(S^\star,u^\ast(TX)) @>>>  H^{k,\delta}(S^\circ,u^\ast(TX))\\
@V{L\bar\partial_J}VV          @VV{L\bar\partial_J}V \\
H^{k-1}(S^\star,\mathrm{End}(TS^\star,u^\ast(TX))) @>>>  H^{k-1}(S^\circ,\mathrm{End}(TS^\circ,u^\ast(TX)))
\end{CD}.
\]
The requirement $k \ge 4$ is because the map in the bottom row requires regularity $k-1\ge 3$, see Lemma \ref{l:H3toH3delta}. To treat $k=3$ as we do in our argument above, we pass to elements in the kernel of $L\bar\partial_J$, which are smooth by elliptic regularity, and then transport to the punctured domain.  
\end{remark}


\subsubsection{Neighborhoods of hyperbolic node curves}\label{sssec : hn}
We describe local coordinates for maps near the nodal map $(u^\bullet,S^\bullet)$ associated to the map $(u,S)$ in Section \ref{sssec : hc}. 

We consider first the variations of the conformal structure of $S^\bullet$. We have the same data as for $S^\circ$: the variation of the conformal structure of $S$ together with the location of the boundary punctures. In addition, we have a new conformal parameter, the gluing parameter for the neck near the node, we write $V_{\mathrm{neck}}=[0,\epsilon)$ and take the corresponding neck to have length $\frac{2}{r}$, $r\in V_{\mathrm{neck}}$. We define
\[
V_{\mathrm{con; hn}} = V_{\mathrm{con}} \times V^\circ_{\mathrm{con}}\times V_{\mathrm{neck}}.
\]
We write $\rho=\frac{1}{r}$ and let $S^\bullet(\rho)$, $\rho\in[\rho_0,\infty)$, denote the domain obtained from $S^\bullet$ by removing $(\rho,\infty)\times[0,1]$ from the neighborhoods of the punctures and gluing the two remaining strip regions along $\rho\times[0,1]$. We call the region resulting from gluing the cut off puncture neighborhoods, $[-\rho,\rho]\times[0,1]\subset S^\bullet(\rho)$ the \emph{gluing neck} and identify $[-\rho,0]\times[0,1]$ and $[0,\rho]\times[0,1]$ as subsets of the strip neighborhood of $s_1$ and $s_2$, respectively, in the natural way.

Let $\beta_\rho\colon [-\rho,\rho]\times[0,1]\to(\C,\R)$ be a cut-off function that is equal to $1$ outside $ [-2,2]\times[0,1]$ and equal $0$ on $[-1,1]\times [0,1]$. Define $u^\bullet_\rho\colon S^\bullet(\rho)\to (X,L)$ by
\[
u^\bullet_{\rho,t}(z) = 
\begin{cases}
\psi_t(u^\bullet(z)), & z\in S^\ast(\rho)\setminus [-\rho,\rho]\times[0,1],\\
\psi_t(\beta(\sigma,\tau)\cdot u^\bullet(\sigma,\tau)), & z=(\sigma,\tau)\in [-\rho,\rho]\times[0,1],
\end{cases}
\]
where the multiplication $\beta\cdot u^\bullet$ refers the local coordinates near the node. Then $u^\bullet_{\rho,t}$ is $J_t$-holomorphic along the boundary. We will center our coordinates at $u^\bullet_{\rho,t}$.

We introduce a weight function $w_{\rho,\delta}\colon S^\bullet(\rho)\to[1,\infty)$,
\[
w_{\rho,\delta}(z)=
\begin{cases}
1, & z\in S^\bullet(\rho)\setminus [-\rho,\rho]\times[0,1],\\
e^{\delta|\sigma|}, & z=(\sigma,\tau)\in [-\rho,\rho]\times[0,1].
\end{cases}
\]

Let $H^{3,\delta}(S^\bullet(\rho),u^\ast TX)$ be the Sobolev space
with norm
\[
\|v\|^2 \ = \ \sum_{k=0}^{3}\int_{S^\bullet(\rho)} |d^{(k)}v|^2 w_{\rho,\delta} \,dA.
\]
Let $H_0^{3,\delta}(S^\bullet(\rho),u^\ast TX)\subset H^{3,\delta}(S^\bullet(\rho),u^\ast TX)$ be the codimension $3$ subspace of sections $v$ that satisfy the condition that the real part (in $\R^3\subset\C^3$) of the average over the middle line in the gluing neck vanishes:
\[
\mathrm{Re}\int_0^1 v(0,t) dt = 0. 
\]

Let $\alpha_\rho\colon [-\rho,\rho]\times[0,1]\to [0,1]$ be a cut off function where $\alpha_\rho=1$ in $[2-\rho,\rho-2]\times [0,1]$, $\alpha_\rho=0$ outside $[1-\rho,\rho-1]\times [0,1]$.
Extend $\alpha_\rho$ to all of $S^\bullet(\rho)$ by $0$. Assume that $\hat c\in\R^3$ is sufficiently small that it is contained in the coordinate chart of Lemma \ref{local coordinates} and define $c\colon (S^\bullet(\rho),\partial S^\bullet(\rho))\to (\C^3,\R^3)\subset (X,L)$:
\begin{equation}\label{eq : local solution hn}
c(z)= 
\begin{cases}
0 & z\in S^\bullet(\rho)\setminus[-\rho,\rho]\times[0,1],\\
\alpha(z)\hat c & z\in [-\rho,\rho]\times[0,1].
\end{cases}
\end{equation}

Let $V_{\mathrm{sol}}$ denote the 3-dimensional space of cut-off constants $c\colon S^\bullet(\rho)\to(\C^3\R^3)\subset(X,L)$ and define
\[
\mathbf{V}_{\mathrm{hn},\rho} \ = \ H^{3,\delta}_0(S^\bullet(\rho);u_{\rho}^\ast TX)\oplus V_{\mathrm{sol}}. 
\]
Let $\mathbf{V_{\mathrm{hn}}}=\bigsqcup_{r\in [0,\epsilon)} \mathbf{V}_{\mathrm{hn},\,1/r}$, where for $r=0$ we let $\mathbf{V}_{\mathrm{hn},\,\infty}=\mathbf{V}_{\mathrm{hc}}$.

The neighborhood of $(u^\bullet, S^\bullet)$ is then
\begin{equation}\label{eq : loc coord hn}
\mathbf{V}_{\mathrm{hn}}\times V_{\mathrm{con;\, hn}}\times (-\delta,\delta),\quad ((v,c),(\kappa,r),t)\mapsto (\psi_t(\exp_{u^\bullet_{1/r,0}}(v)+c), S^\bullet(1/r)),
\end{equation}
where the addition  `$+$' is understood in the local chart as before.

Let $\hat u^\bullet_{\rho,t}$ be any map that agrees with $u^\bullet_{\rho,t}$ on $S^\bullet(\rho)\setminus [-\frac12\rho,\frac12 \rho]\times[0,1]$, that takes $[-\frac12\rho,\frac12 \rho]\times[0,1]$ into the $(\C^3,\R^3)$ coordinates and such that 
\[
\left|\hat u^\bullet_{\rho,t}|_{[-\tfrac12\rho,\tfrac12 \rho]\times[0,1]}\right|_{C^2}=\mathcal{O}(|e^{-2\delta}\rho|). 
\]
Consider the linearization $L_{(\hat u^\bullet_{\rho,t},S^\bullet(\rho))}\bar\partial_{J}$, for fixed $\rho=1/r$ :
\[
L_{(\hat u^\bullet_{\rho,t},S^\bullet(\rho))}\bar\partial_{J}\colon \mathbf{V}_{\mathrm{hn},\rho}\times TV_{\mathrm{con}}\times TV^\circ_{\mathrm{con}}\times\R \ \to \ 
H^{2,\delta}(S^\bullet(\rho),
\mathrm{Hom}^{0,1}
(T S^\bullet(\rho),
(\hat u^\bullet_{\rho,t})^\ast(TX)))
\]
(fixed $\rho$ means that there is no component along $TV_{\mathrm{neck}}$).
\begin{lemma}\label{l : uniform invertiblity of differential}
There exists $\rho_0>0$, $t_0>0$, and $C>0$ such that for all $\rho>\rho_0$ and $t<t_0$
\[
\|((v,c),\kappa,k\partial_t)\|\le C\|L_{(\hat u^\bullet_{\rho,t},S^\bullet(\rho))}\bar\partial_{J}((v,c),\kappa,k\partial_t)\|,
\]
in particular the Fredholm index zero map $L_{(\hat u^\bullet_{\rho,t},S^\bullet(\rho))}\bar\partial_{J}$ is a uniformly invertible isomorphism.
\end{lemma}

\begin{proof}
The argument follows standard lines for Floer gluing.
Consider first $t=0$.
Assume the assertion is false then there exists a sequence $w_\rho=((v_\rho,c_\rho),\kappa_\rho)$ such that 
$\|w_\rho\|=1$ and $\|L_{(\hat u^\bullet_{\rho,t},S^\bullet(\rho))}\bar\partial_{J} w_\rho\|\to 0$, as $\rho\to\infty$. Let $\beta\colon S^\bullet(\rho)\to[0,1]$ be a cut off function equal to $1$ outside $[-2,2]\times[0,1]$ and $0$ inside $[-1,1]\times[0,1]$ and consider the sequences $(\beta v_\rho,c,\kappa_\rho)$.  

Then $((\beta v_\rho, c_\rho),\kappa_\rho)\in \mathbf{V}_{\mathrm{bc}}$ and $L_{(u^\circ,S^\circ)}\bar\partial_J((\beta v_\rho, c_\rho),\kappa_\rho)\to 0$ and it follows from \eqref{eq : approx kernel bc} and that a subsequnece of $(\beta v_\rho, c_\rho),\kappa_\rho)$ converges to an element of the kernel of $L_{(u^\circ,S^\circ)}\bar\partial_J$. We claim that this element must equal to zero. To see this note that $c_\rho$ gives the same asymptotic constants at $s_1$ and $s_2$ in $V_{\rm sol}$. However, as the asymptotic constants of the elements in the kernel are vectors in the direction of the tangent vectors of $u$ at $s_1$ and $s_2$ which are linearly independent we conclude that the limit must have trivial component along the first two kernel elements in \eqref{eq : approx kernel bc}. A similar argument shows that also the component along the last kernel vector vanishes: the difference of asymptotic constants give the $\partial_t$ component, but the difference induced by $c_\rho$ equals $0$. 

We finally check that $(1-\beta)v_\rho\to 0$ also in the middle. This follows from the elliptic estimate for the usual $\bar\partial$-operator on the strip $\R\times[0,1]$ with $\R^3$ boundary condition and negative exponential weight $e^{-\delta|\sigma|}$. We conclude that the result holds for $t=0$. 

Noting that $|L_{(\hat u^\bullet_{\rho,t},S^\bullet(\rho))}\bar\partial_{J}-L_{(\hat u^\bullet_{\rho,0},S^\bullet(\rho))}\bar\partial_{J}|=\mathcal{O}(|t|)$, where the norm is the operator norm the lemma follows.  
\end{proof}

\subsubsection{Neighborhoods of elliptic crossing curves}\label{sssec : ec}
We construct a neighborhood of maps and domains for elliptic crossings in direct analogy with Section \ref{sssec : hc}.
Consider a $J_t$-transverse elliptic crossing curve $(u,S)$ as in Definition \ref{Jt transverse crossing}. 

We again parameterize a neighborhood of the map $(u,S)$ by $H^3(S,u^\ast(TX))\times V_{\mathrm{con}}\times (-\delta,\delta)$, given
$(v, \kappa, t) \in H^3(S,u^\ast(TX))\times V_{\mathrm{con}}\times (-\delta,\delta)$, 
we define $u_{v, \kappa, t}\colon (S,\partial S)  \to  (X,L)$,
\begin{equation}\label{ec before puncturing parameterization}
 z \mapsto  \psi_t(\exp_{u(z)}(v(z))).
\end{equation}
As before, any $J_t$-holomorphic map near $u$ is of the form $u_{v, \kappa, t}$ for a unique $(v, \kappa, t)$.  Here, `transverse crossing' (Definition \ref{Jt transverse crossing}) means that the linearization $L_{(u,S)}\bar\partial_J$ has one dimensional kernel and if $(v,\kappa,k\partial_t)$ is an element in the kernel, then 
\[
du (T_{s} S) + \R\cdot v(s) + T_{u(s)}L = T_{u(s)} X.
\] 

Add an interior marked point $\zeta$ to the map near the intersection point. Let $S^\star$ denote the corresponding domain. The resulting maps are then parameterized by 
\[
H^3(S,u^\ast(TX))\times V_{\mathrm{con}}\times V_{\mathrm{con}}^\star \times (-\delta,\delta),
\]
where $V_{\mathrm{con}}^\star\approx U$, where $U$ is a small disk around $s$. The local coordinate map is the same as before, \eqref{ec before puncturing parameterization}, and there is a natural evaluation map $\ev(u_{\kappa,v,t})=u_{\kappa,v,t}(\zeta)\in X$. The solution space gains two dimensions since the marked point can move and the kernel of the linarization $L_{(u,S^\star)}\bar\partial_J$ at the solution with $u(\zeta)\in L$ is the three-dimensional space spanned by:
\begin{equation}\label{eq : kernel marked ec}
(v,\kappa,\partial_t),\quad  m_1,m_2\in T_{s}U\subset TV^\star_{\mathrm{con}}, 
\end{equation}
where the first tangent vector is the previous kernel element.

We replace the marked point by an interior puncture in $S$ at $\zeta$, with cylinder neighborhood $[0,\infty)\times S^1\subset S$ where the puncture is at $\infty$. Let $S^\circ$ denote the corresponding punctured domain. 
Assume that the puncture is sufficiently close to $s_j$ and that the strip neighborhoods are sufficiently small so that their images lie in the local coordinate system of Lemma \ref{local coordinates}. 

We write 
\[
\mathbf{V}_{\mathrm{ec}} \ := \ H^{3,\delta}(S^\circ, u^\ast(T^\ast X))
\] 
for the weighted Sobolev space of vector fields with three derivatives in $L^2$, which satisfy the boundary condition \eqref{eq : boundary conditions sections}, with norm given by
\begin{equation}
\|v\|^2 \ = \ \sum_{k=0}^{3}\int_{S^\circ} |d^{(k)}v|^2 w_{\delta} \,dA
\end{equation}
Here  $w_\delta\colon S^\circ\to [1,\infty)$ is some fixed function which takes the value $1$ in the complement of the cylinde neighborhood of the puncture and is $e^{\delta|\sigma|}$ in coordinates $(\sigma,\tau)\in[0,\infty)\times S^1$ near the puncture, where $0<\delta<\pi$. 

Since the weight function is exponentially growing, vector fields $v\in \mathbf{V}_{\mathrm{ec}}$ decay at the puncture, and hence $\exp_u(v)$ takes punctures to the origin of $\R^3$. Again, we vary the image of punctures by using cut-off constant solutions (sometimes called asymptotic constants) defined as follows. 

Let $\hat c\in\C^3$, and let $\beta\colon [0,\infty)\times S^1\to [0,1]$ be a cut off function where $\beta=1$ in $[3,\infty)\times S^1$, $\beta=0$ outside $[2,\infty)\times S^1$. Consider $[0,\infty)\times S^1\subset S^\circ$ as the strip neighborhood of $\zeta$, and
extend $\beta$ to all of $S^\circ$ by $0$. Assume that $\hat c$ is sufficiently small that it is contained in the coordinate chart of Lemma \ref{local coordinates} and define $c\colon (S^\circ,\partial S^\circ)\to (\C^3,\R^3)\subset (X,L)$:
\begin{equation}\label{eq : local solution ec}
c(z)= 
\begin{cases}
0 & z\in S^\circ\setminus[0,\infty)\times S^1,\\
\beta(z)\hat c & z\in [0,\infty)\times[0,1].
\end{cases}
\end{equation}

We define $V_{\mathrm{sol}}\approx U\times U$, where $U$ is a neighborhood of the origin in $\R^3$ as the space of maps as in \eqref{eq : local solution ec}. Then $TV_{\mathrm{sol}}\approx \C^3$.  


Also the conformal structure on $S^\circ$ varies. We think of these as changes of the complex structure $j$ on $S^\circ$. These are of two types, variations on $S$ itself and variations corresponding to moving the punctures. To keep the boundary conditions we take variations that are conformal along the boundary. We pick local coordinates
\[
V_{\mathrm{con;\, ec}} = V_{\mathrm{con}} \times V^\circ_{\mathrm{con}}. 
\]
Here $V^\circ_{\mathrm{con}}$ corresponds to moving the location of the puncture. 

From this data we may describe a family of maps near $(u,S)$: 
\begin{equation}\label{eq : loc coord ec}
\mathbf{V}_{\mathrm{ec}} \times V_{\mathrm{sol}} \times V_{\mathrm{con;\, ec}}\times (-\delta,\delta),\quad (v,c,\kappa,t)\mapsto (\psi_t(\exp_{u}(v)+c), S^\circ).
\end{equation}
Here 
\begin{equation}\label{eq : exponetial map ec}
\exp_{u(z)}(v(z))+c(z)= 
\begin{cases}
\exp_{u(z)}(v(z)) & z\in S^\circ\setminus[0,\infty)\times S^1,\\
\exp_{u(z)}(v(z))+ c_k(z) & z\in [0,\infty)\times S^1,
\end{cases}
\end{equation}
where `$+$' in the second row refers to addition in the local coordinates $\C^3$.

We denote the linearization of the Cauchy-Riemann operator in these coordinates by:
\[
L_{(u,S^\circ)}\bar\partial_J\colon \ \mathbf{V}_{\mathrm{ec}} \times TV_{\mathrm{sol}} \times T_{[S^\circ]} V_{\mathrm{con;\, ec}}\times \R  \ \to \ 
H^{2,\delta}(S^\circ; \mathrm{Hom}^{0,1}(TS^\circ, u^\ast(TX))).
\]

There is a continuous linear map $H^3(S^\star,u^\ast(TX))\to H^{3,\delta}(S^\circ,u^\ast(TX))$ which pre-composes a vector field $v$ with the change of coordinates $S^1\times[0,\infty]\to \C$, $(\sigma,\tau)\to e^{-2\pi(\sigma+i\tau)}$, see \cite[Lemma 3.14]{bare}. The map in particular takes the 3-dimensonal kernel of $L_{(u,S^\circ)}\bar\partial_J$ into the 3-dimensional kernel of $L_{(u,S^\star)}\bar\partial_J$ which allows us to determine the latter from \eqref{eq : kernel marked ec} as follows. First, the image of $(v,\kappa,\partial_t)$ is
\[
(v,\kappa,\partial_t)+b,
\]
where $b\in TV_{\mathrm{sol}}^\circ\approx \C^3$ is given by $\partial_t\ev\in \C^3$. Second the image of $m_k$ is $\gamma_k+c_k$, where $\gamma_k\in TV^\circ_{\mathrm{con}}$ corresponds to the linearized movement of the puncture determined by $m_k$ and where $c_k\in TV_{\mathrm{sol}}^\circ$ is given by $\partial_{m_k}\ev\in\C^3$. 
By the transverse crossing assumption, $c_1$, $c_2$, $b$, and $\R^3\approx T_pL$ then generate $\C^3$ as a real vector space.

Thus, the subspace of $\mathbf{V}_{\mathrm{ec}}\oplus V_{\mathrm{con; ec}}$ spanned by 
\begin{equation}\label{eq : approx kernel ec}
\gamma_1+c_1 \ , \ \gamma_2 + c_2 \ , \ \partial_t + b,     
\end{equation}
is an approximate kernel in the sense that $L_{(u,S^\circ)}\bar\partial_J$ is invertible on its $L^2$-complement.

\subsubsection{Neighborhoods of elliptic node curves}\label{sssec : en}
We describe local coordinates for maps near the nodal map $(u^\bullet,S^\bullet)$ associated to map $(u,S)$ in Section \ref{sssec : ec}. The construction is directly analogous to Section \ref{sssec : hn}.

We consider first the variations of the conformal structure of $S^\bullet$. We have the same data as for $S^\circ$: the variation of the conformal structure of $S$ together with the location of the interior puncture. In addition, we have a new conformal parameter, the gluing parameter for the neck near the node, we write $V_{\mathrm{neck}}=[0,\epsilon)$ and take the corresponding neck to have length $\frac{2}{r}$, $r\in V_{\mathrm{neck}}$. As before, we define
\[
V_{\mathrm{con;\, en}} = V_{\mathrm{con}} \times V^\circ_{\mathrm{con}}\times V_{\mathrm{neck}}.
\]
We write $\rho=\frac{1}{r}$ and let $S^\bullet(\rho)$, $\rho\in[\rho_0,\infty)$, denote the domain obtained from $S^\bullet$ by removing $(\rho,\infty)\times S^1$ from the neighborhood of the puncture. We call the remaining region of $[0,\infty)\times S^1$, i.e. $[0,\rho]\times S^1\subset S^\bullet(\rho)$, the \emph{gluing neck}. 

Let $\beta_\rho\colon [0,2\rho]\times[0,1]\to [0,1]$ be a cut-off function that is equal to $1$ in $ [0,\rho-1]\times[0,1]$, equal to $0$ on $[\rho+1,2\rho]\times S^1$, and such that $\beta(\sigma,\tau)+\beta(2\rho-\sigma,\tau)=1$. Define $u^\bullet_\rho\colon S^\bullet(\rho)\to (X,L)$ by
\[
u^\bullet_{\rho,t}(z) = 
\begin{cases}
\psi_t(u^\bullet(z)), & z\in S^\ast(\rho)\setminus [0,\rho]\times S^1,\\
\psi_t(\beta(\sigma,\tau)\cdot u^\bullet(\sigma,\tau)+(1-\beta(\sigma))\cdot u^\dagger(2\rho-\sigma,\tau)), & z=(\sigma,\tau)\in [0,\rho]\times S^1,
\end{cases}
\]
where $u^\dagger$ denotes complex conjugation of $u$ in $\C^3$.
Then $u^\bullet_{\rho,t}(\rho\times S^1)\subset U\subset \R^3$ where $U$ is a small neighborhood of the origin in the local coordinates that take $U$ to $L$,   and $u^\bullet_{\rho,t}$ is $J_t$-holomorphic along the boundary. We will center our coordinates at $u^\bullet_{\rho,t}$.

We introduce a weight function $w_{\rho,\delta}\colon S^\bullet(\rho)\to[1,\infty)$,
\[
w_{\rho,\delta}(z)=
\begin{cases}
1, & z\in S^\bullet(\rho)\setminus [0,\rho]\times S^1,\\
e^{\delta|\sigma|}, & z=(\sigma,\tau)\in [0,\rho]\times S^1.
\end{cases}
\]

Let $H^{3,\delta}(S^\bullet(\rho),u^\ast TX)$ be the Sobolev space
with norm
\[
\|v\|^2 \ = \ \sum_{k=0}^{3}\int_{S^\bullet(\rho)} |d^{(k)}v|^2 w_{\rho,\delta} \,dA.
\]
Let $H_0^{3,\delta}(S^\bullet(\rho),u^\ast TX)\subset H^{3,\delta}(S^\bullet(\rho),u^\ast TX)$ be the codimension $3$ subspace of sections $v$ that satisfy the condition that the average over the boundary circle vanishes:
\[
\int_{S^1} v(\rho,t) dt = 0\in\R^3. 
\]

Let $\alpha_\rho\colon [0,\rho]\times S^1\to [0,1]$ be a cut off function where $\alpha_\rho=1$ in $[2,\rho]\times S^1$, $\alpha_\rho=0$ outside $[1,\rho]\times S^1$.
Extend $\alpha_\rho$ to all of $S^\bullet(\rho)$ by $0$. Assume that $\hat c\in\R^3$ is sufficiently small that it is contained in the coordinate chart of Lemma \ref{local coordinates} and define $c\colon (S^\bullet(\rho),\partial S^\bullet(\rho))\to (\C^3,\R^3)\subset (X,L)$:
\begin{equation}\label{eq : local solution en}
c(z)= 
\begin{cases}
0, & z\in S^\bullet(\rho)\setminus[0,\rho]\times S^1,\\
\alpha(z)\hat c, & z\in [0,\rho]\times S^1.
\end{cases}
\end{equation}

Let $V_{\mathrm{sol}}$ denote the 3-dimensional space of cut-off constants $c\colon S^\bullet(\rho)\to(\C^3\R^3)\subset(X,L)$ and define
\[
\mathbf{V}_{\mathrm{en},\rho} \ = \ H^{3,\delta}_0(S^\bullet(\rho);u_{\rho}^\ast TX)\oplus V_{\mathrm{sol}}. 
\]
Let $\mathbf{V_{\mathrm{en}}}=\bigsqcup_{r\in [0,\epsilon)} \mathbf{V}_{\mathrm{en},\,1/r}$, where for $r=0$ we let $\mathbf{V}_{\mathrm{en},\,\infty}=\mathbf{V}_{\mathrm{ec}}$.
The neighborhood of $(u^\bullet, S^\bullet)$ is then
\begin{equation}\label{eq : loc coord en}
\mathbf{V}_{\mathrm{en}}\times V_{\mathrm{con;\, en}}\times (-\delta,\delta),\quad ((v,c),(\kappa,r),t)\mapsto (\psi_t(\exp_{u^\bullet_{1/r,0}}(v)+c), S^\bullet(1/r)),
\end{equation}
where the addition  `$+$' is understood in the local chart as before.

Let $\hat u^\bullet_{\rho,t}$ be any map that agrees with $u^\bullet_{\rho,t}$ on $S^\bullet(\rho)\setminus [\frac12\rho,\rho]\times S^1$, that takes $[\frac12\rho,\rho]\times S^1$ into the $(\C^3,\R^3)$ coordinates and such that 
\[
\left|\hat u^\bullet_{\rho,t}|_{[\tfrac12\rho,\rho]\times S^1}\right|_{C^2}=\mathcal{O}(|e^{-2\delta}\rho|). 
\]
Consider the linearization $L_{(\hat u^\bullet_{\rho,t},S^\bullet(\rho))}\bar\partial_{J}$, for fixed $\rho=1/r$ :
\[
L_{(\hat u^\bullet_{\rho,t},S^\bullet(\rho))}\bar\partial_{J}
\colon \mathbf{V}_{\mathrm{en},\rho}\times TV_{\mathrm{con}}\times TV^\circ_{\mathrm{con}}\times\R \ \to \ 
H^{2,\delta}(S^\bullet(\rho),
\mathrm{Hom}^{0,1}
(T S^\bullet(\rho),
(\hat u^\bullet_{\rho,t})^\ast(TX))),
\]
(again fixed $\rho$ means that there is no component along $TV_{\mathrm{neck}}$.) 
\begin{lemma}\label{l : uniform invertiblity of differential e}
There exists $\rho_0>0$, $t_0>0$, and $C>0$ such that for all $\rho>\rho_0$ and $t<t_0$
\[
\|((v,c)\kappa,k\partial_t)\|\le C\|L_{(\hat u^\bullet_{\rho,t},S^\bullet(\rho))}\bar\partial_{J}((v,c),\kappa,k\partial_t)\|,
\]
in particular the Fredholm index zero map 
$L_{(\hat u^\bullet_{\rho,t},S^\bullet(\rho))}\bar\partial_{J}$ is a uniformly invertible isomorphism.
\end{lemma}

\begin{proof}
The proof is directly analogous to the proof of Lemma \ref{l : uniform invertiblity of differential} and differs only in the argument where it is shown that the limit in the kernel of $L_{(\hat u^\circ,S^\circ)}\bar\partial_{J}$, see \eqref{eq : approx kernel ec}, equals zero. In this case the components along $\gamma_k+c_k$ and $\partial_t+b$ equals zero since the asymptotic constants $c\in V_{\mathrm{sol}}$ for the nodal curve takes values in $\R^3$ whereas $c_k$, $k=1,2$ and $b$ span the complement of $\R^3$. 
\end{proof}

\subsection{1-parameter families of solutions}\label{ssec : familes of sols}
In this section we construct families of solutions to the Cauchy Riemann equations near $J_t$-transverse crossings and associated nodal curves.

\subsubsection{Solutions near hyperbolic and elliptic crossings}\label{sssec : sols hc and ec}
The following result follows directly from the definition.
\begin{lemma}\label{l : sols hc and ec}
    If $(u,S)$ is a $J_t$-transverse hyperbolic (or elliptic) crossing then the space of solutions 
    \[
    \bar\partial_{J_t}^{-1}(0)\subset \mathbf{V}_{\mathrm{hc}}\times V_{\mathrm{con}}\times (-\delta,\delta)
    \quad (\text{or}\quad \bar\partial_{J_t}^{-1}(0)\subset \mathbf{V}_{\mathrm{ec}}\times V_{\mathrm{con}}\times (-\delta,\delta))
    \]
    is a 1-manifold, the projection $\bar\partial_{J_t}^{-1}(0)\to(-\delta,\delta)$ is a $C^1$-diffeomorphism and solutions that project to $t\ne 0$ are embeddings.
\end{lemma}
\begin{proof}
This is an immediate consequence of the implicit function theorem for Fredholm operators and the description of the kernel of  $L_{(\hat u^\circ,S^\circ)}\bar\partial_{J}$ in Section \ref{sssec : hc} (or in Section \ref{sssec : ec}). 
\end{proof}

We write $A_t=u_t(S_t)$ for the images of the family of solutions in Lemma \ref{l : sols hc and ec}.

\subsubsection{Pre-gluing for hyperbolic nodes}\label{sssec : pre glue hyp}
We construct approximate solutions to the $\bar\partial_{J_t}$-equation in a neighborhood of $(u^\bullet, S^\bullet)$ in $\mathbf{V}_{\mathrm{hc}}\times V_{\mathrm{hc, con}}\times (-\delta,\delta)$ for each sufficuiently small $r\in V_{\mathrm{neck}}=[0,\epsilon)$.

In coordinates that satisfy \eqref{eq: J close to standard}, since $u$ is $J_0$-holomorphic with non-zero derivative at $s_k$, $k=1,2$, we have  Fourier expansion in the strip neighborhoods $[0,\infty)\times[0,1]$ of the nodes of the form
\[
u(\sigma,\tau)= c_{k;1} e^{-\pi(\sigma+i\tau)} + \Ordo(e^{-2\pi \sigma}),\quad c_{k,1}\in\R^3, \quad k=1,2.
\]
Consider map $u_{\mathrm{hol};\rho}\colon[-\frac12\rho,\frac12\rho]\times[0,1]\to(\C^3,\R^3)$ given by
\[
u_{\mathrm{hol};\rho}(\sigma,\tau)=
c_{1,1} e^{-\pi((\sigma+\frac12\rho)+i\tau)} + c_{2,1} e^{\pi((\sigma-\frac12\rho)+i\tau))}.
\]

\begin{remark}\label{r : preglu = model hyp}
Note that there is a real linear map $B\colon\C^3\to\C^3$ such that takes the image $u_{\mathrm{hol};\rho}([-\frac12\rho,\frac12\rho]\times[0,1])$ to the strip $D_{1/\rho}$ in the standard family in \eqref{eq:realnode}, $B(u_{\mathrm{hol};\rho}([-\frac12\rho,\frac12\rho]\times[0,1]))=D_{1/\rho}$. 
\end{remark}

Let $\beta\colon [-\rho,\rho]\times [0,1]\to [0,1]$ be a cut-off function such that $\beta=0$ on $S^\bullet(\rho)\setminus [-\frac12\rho-2,\frac12\rho+2]\times[0,1]$ and equal to $1$ on $[-\frac12\rho,\frac12\rho]\times[0,1]$. Define the pre-glued map $u_{\mathrm{pre};\rho,t}\colon (S^\bullet(\rho),\partial S^{\bullet}_\rho)\to (X,L)$
\begin{equation}\label{eq : preglue hn}
u_{\mathrm{pre};\rho,t}^\bullet(z)=
\begin{cases}
u^\bullet_{\rho,t}(z), & z\in S^\bullet(\rho)\setminus [-\rho,\rho]\times[0,1],\\
\psi_t((1-\beta(\sigma,\tau))\cdot u^\bullet(\sigma,\tau)+\beta(\sigma,\tau)\cdot u_{\mathrm{hol};\rho}(\sigma,\tau)), &
z=(\sigma,\tau)\in[-\rho,\rho]\times[0,1].
\end{cases}
\end{equation}

We next show that maps sufficiently close to $u_{\mathrm{pre};\rho,t}$ together with the hyperbolic crossing solution family of Lemma \ref{l : sols hc and ec} give standard families. Let $\|\cdot \|$ denote the norm induced by the norm in $\mathbf{V}_{\mathrm{hn},\rho}$.

\begin{lemma}\label{l : standard family hn}
    If $w_\rho\colon (S^\bullet(\rho),\partial S^\bullet(\rho))\to (X,L)$ is any family of maps with
    \[
    \|w_\rho-u_{\mathrm{pre};\rho,t}^\bullet\|=\mathcal{O}(e^{-(\pi-\eta)\rho}),
    \]
    where $\eta<\delta/2$ then there exists $r_0=\frac{1}{\rho_0}$ such that the family $B_r=w_{1/r}(S^\bullet(1/r))$ together with the family $A_t$, see Section \ref{sssec : sols hc and ec}, form a standard family.
\end{lemma}
\begin{proof}
The minimal distance between the two arcs in $u_{\mathrm{pre};\rho,t}(\partial S^\bullet(\rho))$ near $p\in L$ is bounded below by $K e^{-\pi\rho}$ for some constant $K>0$. By Remark \ref{r : preglu = model hyp}, it follows that any family of maps of $C^1$-distance $\mathcal{O}(e^{-(\pi+\gamma)\rho})$-distance from $u_{\mathrm{pre};\rho,t}$ satisfies the lemma. Since the $H^{3,\delta}$-norm controls the $C^1$-norm and since the weight function is larger than $e^{\frac12\delta\rho}$ where $u_{\mathrm{pre};\rho,t}$ differs from $u^{\bullet}_{\rho,t}$ the lemma follows.  
\end{proof}

We next show that $u^\bullet_{\rho;t}$ is almost holomorphic. Norms refer to the norm induced from 
$H^{2,\delta}(S^\bullet(\rho),\mathrm{Hom}^{0,1}(T S^\bullet(\rho),(\hat u^\bullet_{\rho,t})^\ast(TX)))$ by the local coordinates
\begin{lemma}\label{l : estimate pre glue hn}
    We have
    \[
    \|\bar\partial_{J_t}u_{\mathrm{pre};\rho,t^\bullet}\|=\mathcal{O}(|t|+e^{-(\frac32\pi-\delta)\rho}).
    \]
\end{lemma}
\begin{proof}
Consider first the case $t=0$, i.e., we estimate $\bar\partial_Ju_{\mathrm{pre};\rho,0}^\bullet$. By definition, $\bar\partial_Ju_{\mathrm{pre};\rho,0}^\bullet$ is holomorphic outside the region $[-\frac12\rho,\frac12\rho]\times[0,1]$ in the neck. Inside this region we have $|u_{\mathrm{pre};\rho,0}(\sigma,\tau)|=\mathcal{O}(e^{-\pi\sigma})$, along the map $|(J_0-I)|_{u_{\mathrm{pre};\rho,0}^\bullet(\sigma,\tau)}|=\mathcal{O}(e^{-2\pi\sigma})$ by \eqref{eq: J close to standard}. Since the glued in map is $I$-holomorphic and the weight of the exponential weight of the norm is $\delta$, it follows that
\[
\|\bar\partial_{J_0}u_{\mathrm{pre};\rho,0}^\bullet\|=\mathcal{O}(e^{-(\frac32\pi-\delta)\rho}).
\]

Taylor expanding $J_t=J_0+t\Gamma + \mathcal{O}(t^2)$ we see that $\|\bar\partial_{J_t}w-\bar\partial_J w\|=\mathcal{O}(|t|\|dw\|)$. The lemma follows. 
\end{proof}

\subsubsection{Pre-gluing for elliptic nodes}\label{sssec : pre glue ell}
We construct approximate solutions to the $\bar\partial_{J_t}$-equation in a neighborhood of $(u^\bullet, S^\bullet)$ in $\mathbf{V}_{\mathrm{ec}}\times V_{\mathrm{ec, con}}\times (-\delta,\delta)$ for each sufficiently small $r\in V_{\mathrm{neck}}=[0,\epsilon)$, in direct analogy with Section \ref{sssec : pre glue hyp}.

As before, we have the Fourier expansion in coordinates that satisfy \eqref{eq: J close to standard}
\[
u(\sigma,\tau) = c_{1} e^{-2\pi(s+it)} + \Ordo(e^{-4\pi s}),\quad c_{1}\in\C^3.
\]

Consider map $u_{\mathrm{hol};\rho}\colon[\frac12\rho,\rho]\times S^1\to(\C^3,\R^3)$ given by 
\[
u_{\mathrm{hol};\rho}(\sigma,\tau)=
c_{1} e^{-2\pi(\sigma+i\tau)} + c_{1}^\dagger e^{2\pi(2\rho-\sigma+i\tau))}.
\]

\begin{remark}\label{r : preglu = model ell}
Note that there is a real linear map $B\colon\C^3\to\C^3$ that takes the image $u_{\mathrm{hol};\rho}([\frac12\rho,\rho]\times S^1)$ to the cylinder $D_{1/\rho}$ in the standard family in \eqref{eq:imagnode}, $B(u_{\mathrm{hol};\rho}([\frac12\rho,\rho]\times S^1))=D_{1/\rho}$. 
\end{remark}

Let $\beta\colon [0,\rho]\times S^1\to [0,1]$ be a cut-off function such that $\beta=0$ on $S^\bullet(\rho)\setminus [\frac12\rho-2,\rho]\times S^1$ and equal to $1$ on $[\frac12\rho,\rho]\times[0,1]$. Define the pre-glued map $u_{\mathrm{pre};\rho,t}\colon (S^\bullet(\rho),\partial S^{\bullet}_\rho)\to (X,L)$
\begin{equation}\label{eq : preglue en}
u_{\mathrm{pre};\rho,t}^\bullet(z)=
\begin{cases}
u^\bullet_{\rho,t}(z), & z\in S^\bullet(\rho)\setminus [0,\rho]\times S^1,\\
\psi_t((1-\beta(\sigma,\tau))\cdot u^\bullet(\sigma,\tau)+\beta(\sigma,\tau)\cdot u_{\mathrm{hol};\rho}(\sigma,\tau)), &
z=(\sigma,\tau)\in[0,\rho]\times S^1.
\end{cases}
\end{equation}

We show that maps sufficiently close to $u_{\mathrm{pre};\rho,t}$ together with the elliptic crossing solution family of Lemma \ref{l : sols hc and ec} give standard families. Let $\|\cdot \|$ denote the norm induced by the norm in $\mathbf{V}_{\mathrm{en},\rho}$.

\begin{lemma}\label{l : standard family en}
    If $w_\rho\colon (S^\bullet(\rho),\partial S^\bullet(\rho))\to (X,L)$ is any family of maps with
    \[
    \|w_\rho-u_{\mathrm{pre};\rho,t}^\bullet\|=\mathcal{O}(e^{-(2\pi-\eta)\rho}),
    \]
    where $\eta<\delta/2$ then there exists $a_0=\frac{1}{\rho_0}$ such that the family $B_a=w_{1/a}(S^\bullet(1/a))$ together with the family $A_t$ see Section \ref{sssec : sols hc and ec}, form a standard family.
\end{lemma}
\begin{proof}
The smallest axis of the ellipse $u_{\mathrm{pre};\rho,t}(\partial S^\bullet(\rho))$ near $p\in L$ is bounded below by $Ke^{-2\pi\rho}$ for some constant $K>0$. By Remark \ref{r : preglu = model ell}, it follows that any family of maps of $C^1$-distance $\mathcal{O}(e^{-(2\pi+\gamma)\rho})$-distance from $u_{\mathrm{pre};\rho,t}$ satisfies the lemma. Since the $H^{3,\delta}$-norm controls the $C^1$-norm and since the weight function is larger than $e^{\frac12\delta\rho}$ where $u_{\mathrm{pre};\rho,t}$ differs from $u^{\bullet}_{\rho,t}$ the lemma follows.  
\end{proof}

We next show that $u^\bullet_{\rho;t}$ is almost holomorphic. Norms refer to the norm induced from 
$H^{2,\delta}(S^\bullet(\rho),\mathrm{Hom}^{0,1}(T S^\bullet(\rho),(\hat u^\bullet_{\rho,t})^\ast(TX)))$ by the local coordinates
\begin{lemma}\label{l : estimate pre glue en}
    We have
    \[
    \|\bar\partial_{J_t}u_{\mathrm{pre};\rho,t}^\bullet\|=\mathcal{O}(|t|+e^{-(3\pi-\delta)\rho}).
    \]
\end{lemma}
\begin{proof}
Directly analogous to the proof of Lemma \ref{l : estimate pre glue hn}. The difference in the size of the exponent in the right hand side comes from the difference in decay rates, the analogue of the argument in the proof of Lemma \ref{l : estimate pre glue hn} is as follows:

In the region where $u_{\mathrm{pre};\rho,0}^\bullet$ differs from $u^\bullet$, $|u_{\mathrm{pre};\rho,0}^\bullet(\sigma,\tau)|=\mathcal{O}(e^{-2\pi\sigma})$, along the map $|(J_0-I)|_{u_{\mathrm{pre};\rho,0}(\sigma,\tau)}|=\mathcal{O}(e^{-4\pi\sigma})$ by \eqref{eq: J close to standard}. Since the glued in map is $I$-holomorphic and the weight of the exponential weight of the norm is $\delta$, it follows that
\[
\|\bar\partial_{J_0}u_{\mathrm{pre};\rho,0}^\bullet\|=\mathcal{O}(e^{-(3\pi-\delta)\rho}).
\]
\end{proof}

\subsubsection{Floer gluing and solutions near hyperbolic and elliptic nodes}\label{sssec : sols hn en}
Floer gluing is a version of Newton iteration for Fredholm maps. We will apply it to the maps 
\begin{equation}\label{eq : dbar map hn}
\bar\partial_{J_t}\colon \mathbf{V_{\mathrm{hc},\rho}}\times V_{\mathrm{con}}\times V_{\mathrm{con}}^\circ\times(-\delta,\delta)\to H^{2,\delta}(S^\bullet(\rho), \mathrm{Hom}^{0,1}(TS,(u^\bullet_{\rho,t})^\ast(TX))).
\end{equation}
near the map with hyperbolic node and to
\begin{equation}\label{eq : dbar map en}
\bar\partial_{J_t}\colon \mathbf{V_{\mathrm{ec},\rho}}\times V_{\mathrm{con}}\times V_{\mathrm{con}}^\circ\times(-\delta,\delta)\to H^{2,\delta}(S^\bullet(\rho), \mathrm{Hom}^{0,1}(TS,(u^\bullet_{\rho,t})^\ast(TX))).
\end{equation}
near the map with elliptic node. 

A general version of the Floer-Picard lemma is given in \cite[Lemma 6.1]{ES}. To use this lemma, we denote the maps in \eqref{eq : dbar map hn} or \eqref{eq : dbar map en} for $\rho=1/r$ varying in $(0,\epsilon)$ simply as $f\colon X\to B$ and think of $X\to (0,\epsilon)$ and $B\to (0,\epsilon)$ as fibrations of Banach spaces. Then \cite[Lemma 6.1]{ES} says the following. Assume that there is a Taylor expansion at $0\in X_a$ in the fiber direction
\[
f_r(x) = f_r(0) + df_r(0)x + N_r(x)
\]
such that $df_r(0)$ is surjective and admits a smooth family of uniformly bounded right inverses $Q_a\colon B_r\to X_r$. Assume also that there exists $C>0$ such that the non-linear term satisfies 
\[
\|N_r(y)-N_r(x)\|_{B_r} = C\|x-y\|_{X_r}(\|x\|_{X_r}+\|y\|_{X_r}).
\]
Then if $\|Q_r f_r(0)\|<\tfrac{1}{8C}$, for $\delta<\frac{1}{4C}$, $f^{-1}_r(0_B)\cap \{x\colon \|x-0\|_{X}<\delta\}$ is a smooth submanifold diffeomorphic to a bundle over $(0,\epsilon)$ with fiber $\mathrm{ker}(df_r(0))$. 

\begin{lemma}\label{l : sols hn and en}
The set of solutions $\bar\partial_{J_t}^{-1}(0)$ in \eqref{eq : dbar map hn} and \eqref{eq : dbar map en} is a smooth $1$-manifold that projects diffeomorphically to $(0,\epsilon_1)\subset (0,\epsilon)$ for all sufficiently small $\epsilon_1>0$. Furthermore, if $u^\bullet_{\mathrm{glue};\rho,t}$ is the solution at $\rho=1/r$, $\bar\partial_{J_t}(u^\bullet_{\mathrm{glu};\rho,t})$ then
\begin{equation}\label{eq : initial value}
\|u^\bullet_{\mathrm{glu};\rho,t}-u^\bullet_{\mathrm{pre};\rho,t}\|=\mathcal{O}(\|\bar\partial_{J_t}(u^\bullet_{\mathrm{pre};\rho,t})\|).
\end{equation}
\end{lemma}

\begin{proof}
We apply the Floer-Picard lemma as described above. Invertibility of the differential follows from Lemmas \ref{l : uniform invertiblity of differential} and \ref{l : uniform invertiblity of differential e}. The estimate for the non-linear term follows from standard arguments estimating the norm of the exponential map acting on a vector field, see \cite[Lemma 8.16]{EESLCHRn} and \cite[Equation (4.17)]{EESPxR}. The statement on the solution space follows.

The estimate \eqref{eq : initial value} follows from \cite[Equation (6.3)]{ES} in the proof of \cite[Lemma 6.1]{ES}. (It holds since the solution is produced by iteration of a contraction.)  
\end{proof}

\subsubsection{Proof of Theorem \ref{t : wall crossing gluing}}
Let $(u,S)$ be a $J_t$-transverse hyperbolic or elliptic crossing. Lemma \ref{l : sols hc and ec} give the $1$-parameter family of solutions with images $A_t$. 

We now apply Lemma \ref{l : sols hn and en} to $u_{\mathrm{pre};\rho,t}$ where we take $|t|\le e^{-(2\pi-2\delta)\rho}$ in the elliptic case and $|t|<  e^{-(\pi-2\delta)\rho}$ in the hyperbolic case. Then it follows from Lemmas \ref{l : estimate pre glue en} and \ref{l : estimate pre glue hn} in combination with \eqref{eq : initial value} that
\[
\|u^\bullet_{\mathrm{glu};\rho,t}-u^\bullet_{\mathrm{pre};\rho,t}\|=\mathcal{O}(e^{e^{-(2\pi-\delta)\rho}}),\quad
\|u^\bullet_{\mathrm{glu};\rho,t}-u^\bullet_{\mathrm{pre};\rho,t}\|=\mathcal{O}(e^{e^{-(\pi-\delta)\rho}}),
\]
in the elliptic and hyperbolic cases. If $B_a$ are the images of the solutions then Lemmas \ref{l : standard family en} and \ref{l : standard family hn} implies the theorem.\qed

\subsubsection{Detailed boundary information}\label{sssec : fine points}
For skein valued curve counting, we will require only a weak form of control of moduli spaces as in the definitions of standard families in Definitions \ref{standard hyperbolic degeneration} and \ref{standard elliptic degeneration}. 

The Floer gluing arguments above that control boundaries of moduli spaces near nodal curves give more precise information. Although not necessary for the applications in this paper, we include the following stronger result. 
Let $\mathcal{M}(X,L,J)$ denote the space of solutions $\bar\partial_{J_t}^{-1}(0)$ of Lemma \ref{l : sols hn and en} compactified by adding the nodal solution $(u^\bullet,S^\bullet)$, $\bar\partial_{J_0}(u^\bullet)=0$. 

\begin{proposition}\label{p : fine points}
For generic path $J_t$ of almost complex structures, there exists a constant $c\ne 0$ such that
the germ of the projection $P\colon\mathcal{M}(X,L,J)\to [0,\epsilon)\times (-\delta,\delta)$, $P(u_{\mathrm{glue};\rho,t})=(\frac{1}{\rho},t)$ and $P(u^\bullet)=(0,0)$ at $(0,0)$ agrees up to second order with the curve $r\mapsto (r,c r^2)$, $r\in[0,\epsilon)$. 
\end{proposition}

\begin{proof}
Lemmas \ref{l : uniform invertiblity of differential} (respectively \ref{l : uniform invertiblity of differential e}) implies that a unit vector in the kernel of $D_{\mathrm{en};\rho,t}$ (respectively $D_{\mathrm{en};\rho,t}$) has a non-zero component $\nu_\rho\in TV_{\mathrm{neck}}$, and that $|\nu_\rho|>\nu_0>0$ for all $\rho\in[\rho_0,\infty)$.
At $(u^\bullet,S^\bullet)$, $TV_{\mathrm{neck}}$ equals the kernel of the differential, and the cokernel of the linearized operator $L_{(u^\bullet,S^\bullet)}\bar\partial_{J_0}$ (with $t$ fixed at $t=0$) has cokernel of dimension $1$ spanned by $\dot J_0$. Recall that the second derivative of a quadratic map from the kernel of the differential to the cokernel, where then can be identified with a quadratic map $T[0,\epsilon)\to T(-\delta,\delta)$, $r\mapsto c r^2$. Imposing the codimension one condition $c=0$ would reduce the index of the correspondingly constrained $\bar\partial_{J_t}$-map. Hence if the family $J_t$ is generic, $c\ne 0$. The lemma follows.  
\end{proof}

\subsection{Orientations at hyperbolic and elliptic crossings/nodes}
In this section we show that relative orientations between crossing and nodal curves are compatible between the hyperbolic and elliptic cases. For generalities about orientations of moduli spaces, see Appendix \ref{sec : basic orientations}. 

We first show that relations between orientations of moduli spaces at elliptic and hyperbolic nodal curves and respective normalizations are locally determined. Let $J_t$, $t\in(-\epsilon,\epsilon)$ be a generic path of almost complex structures and let $\Omega(u)\subset\mathcal{M}(X,L)$ be a $1$-manifold around the crossing curve $u_0$ at $t=0$ and $\Omega(v)\subset\mathcal{M}(X,L)$ be a $1$-manifold with boundary the nodal curve $v$ as in Theorem \ref{t : wall crossing gluing}, see Proposition \ref{p : fine points} for parameterizations.  

\begin{lemma}\label{l : boundary orientation}
The orientation of moduli space $\Omega(u)$ at $u$ together with the local crossing/node model determines the orientation of the moduli space $\Omega(v)$ at $v$ (and vice versa). 
\end{lemma}

\begin{proof}
To see this we consider determinant bundles of the $\bar\partial_J$ operator. We start in the less complicated elliptic case. Here we degenerate the linearized operator over the curve with an extra boundary to the closed curve with a sphere attached which further has a disk attach to it. Here the sphere and the disk constitutes a local model glued to the original closed curve. Marked points and gluing constraints are all even dimensional and the rotation parameters in the disk and sphere are coupled. It follows that the relevant index bundle is expressed as a product of the index bundle over the original curve and an index bundle in the model.

In the case of the hyperbolic node we argue similarly. Introduce two boundary marked points where we attach two disks each with a further marked point where the maps are required to agree. Now the boundary marked points shifts the index by $1$ each and the orientation depends on the order. The gluing locus is the inverse image of the diagonal under the product of evaluation maps that also depends on the order, switching the order switches orientation twice. Hence the result is independent of order and the relevant index bundle is expressed as a product of the index bundle of the original curve and the index bundle in the model.      
\end{proof}

\begin{remark} 
In the language of Proposition \ref{p : fine points}, Lemma \ref{l : boundary orientation} says that the orientation of $\Omega(u)$ at $t=0$, $\pm\partial_t$ determines the orientation $\pm\partial_{r}$ at $\Omega(v)$ at $(r,t)=(0,0)$. 

We consider also the effect of the sign of $c$ in Proposition \ref{p : fine points}: if $\Omega(v)$ is oriented by $\partial_r$ at $(t,r)=0$ and if $c>0$ then $\Omega(v)\ne \emptyset$ over $t>0$ and the oriented tangent vector of $\Omega(v)$ has positive component along $+\partial_t$, whereas if $c<0$ then $\Omega(v)\ne \emptyset$ over $t<0$ and the oriented tangent vector of $\Omega(v)$ has positive component along $-\partial_t$.    
\end{remark}

We next compare the relative signs between the crossing and nodal families of Lemma \ref{l : boundary orientation} in the elliptic and hyperbolic cases. Consider an oriented standard hyperbolic or elliptic crossing family over $t\in(-\delta,\delta)$, with crossing at $t=0$. Then for $t\ne 0$ the maps in the family has a crossing sign: the crossing sign of the oriented nearby boundary strands in the hyperbolic case and the crossing sign between the curve and the Lagrangian in the elliptic case. We orient $(-\delta,\delta)$ so that the positive crossing maps appear in the positive half interval $(0,\delta)$. The orientation of the crossing family is then either positive or negative as compared to the orientation of $(-\delta,\delta)$. Consider an oriented standard nodal family, at its nodal instance the family is oriented by the normal to the nodal locus we say that the orientation is either outward or inward according to whether the orientations points out of or into the space of smooth domains. Lemma \ref{l : boundary orientation} then gives two functions
\[
F_{\mathrm{e}}, F_{\mathrm{h}}\colon \{\text{positive, negative}\} \to \{\text{inward, outward}\},
\]
where $F_{\mathrm{e}}$ and $F_{\mathrm{h}}$ give the orientations induced from $\Omega(u)$ on $\Omega(v)$ in the elliptic and hyperbolic cases, respectively.

\begin{lemma}\label{l : same orientations elliptic and hyperbolic}
The functions $F_{\mathrm{e}}$ and $F_{\mathrm{h}}$ agree.
\end{lemma}

\begin{proof}
\begin{figure}[htp]
	\centering
	\includegraphics[width=.55\linewidth]{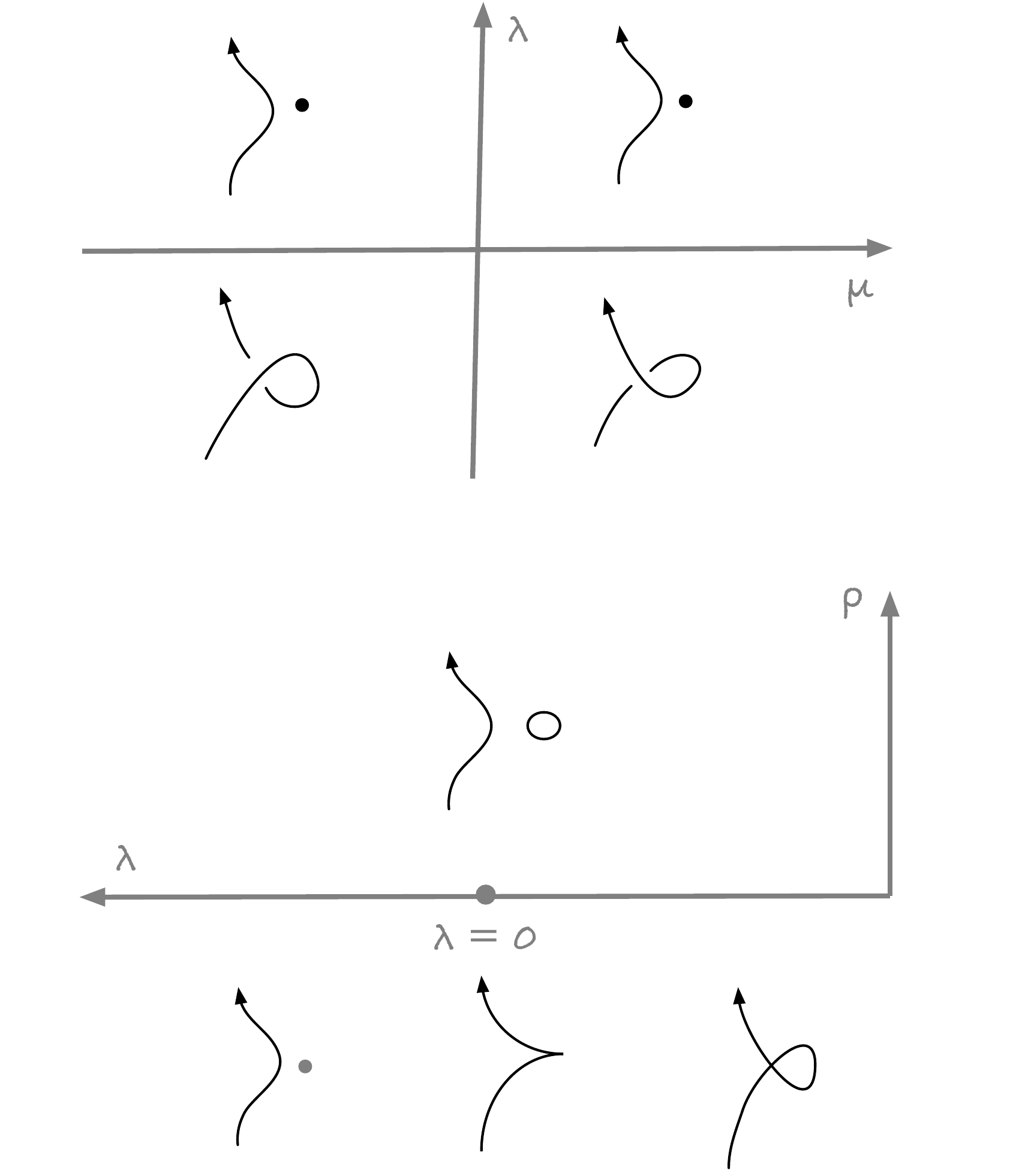}
	\caption{Relation between hyperbolic and elliptic nodes near a cusp.}
	\label{fig:cusp}
\end{figure}

Consider the 2-parameter family of holomorphic maps defined for $\zeta$ in a neighborhood of the origin in the upper half plane $\{z=s+it\in\C\colon y\ge 0 \}$, see Figure \ref{fig:cusp}, upper part:
\[ 
z\mapsto \left(\zeta^{2}, \zeta(\zeta^{2}+\lambda), \mu \zeta\right),
\]   
where $(\lambda,\mu)$ lies in a neighborhood of the origin in $\R^{2}$. For $\mu=0$, $\lambda<0$ the holomorphic map has a hyperbolic boundary crossing and for $\mu=0$, $\lambda>0$ the curve has an elliptic boundary crossing.

Just as in the 1-parameter case in Lemma \ref{l : boundary orientation}, the orientation of this 2-dimensional space of curves is determined by the index bundle over the central curve with a cusp together with an orientation of the 2-dimensional $(\lambda,\mu)$-parameter space.  Consider the 1-parameter family of curves with $\mu\ne 0$ fixed and $\lambda$ changing sign. Such a family corresponds to a framing change and we know that the curve count in the framed skein module does not change. 

Consider next the lower part of Figure \ref{fig:cusp}. Here we depict the $\lambda$-axis as the boundary of another family of holomorphic curves of Euler characterstic one smaller. For generic $\lambda$ and $\rho$ the boundary is as depicted. For $\lambda<0$ the distance between the components shrinks faster than the radius of the circle, for $\lambda>0$ the circle shrinks faster, and at $\lambda=0$ they shrink at the same rate and we limit to a cuspidal curve. 

We conclude from these pictures and the orientation on the 2-parameter family that the orientation of curve at $(\lambda,\rho)$, $\rho>0$, is induced by either the inward or outward orientation in both the elliptic and the hyperbolic cases, i.e., $F_{\mathrm{e}}=F_{\mathrm{h}}$.  
\end{proof}

\section{Bare compactness and somewhere injectivity in basic classes}
Deformation invariance of skein valued curve counts depend crucially on the results of Section \ref{sec: gluing} that control wall crossings in generic 1-parameter families of holomorphic maps. 
Here we discuss two additional ingredients. 
In Section \ref{ssec : ghost} we recall a compactness result for holomorphic curves without ghost components. In Section \ref{sec : somewhere inj} we show that holomorphic curves in basic homology classes are somewhere injective, which gives transversality by standard arguments (in turn recalled in Appendix \ref{ssec : gen Fredholm injective}).

\subsection{Bare compactness}\label{ssec : ghost}
As mentioned in Section \ref{Intro}, skein valued curve counts depend on controlling the wall crossings in generic 1-parameter families of bare holomorphic curves (i.e., curves with only positive area components) and on compactness for such curves. Note that compactness for bare curves is not a consequence of Gromov compactness, a sequnece of bare curves could converge to a curve with a ghost component attached. The following result shows that generically such ghost bubbling can be avoided in Gromov limits. 

\begin{theorem} \label{lem:compactness} \cite[Theorem 1.1]{ghost}
	Let $u_\alpha\colon(S_\alpha, \partial S_\alpha) \to (X, L)$ be a sequence of immersed $J_{\alpha}$-holomorphic curves.  Suppose the sequence converges to a $J$-holomorphic $u\colon (S, \partial S) \to (X, L)$, 
	with a collapsed component $S_0$, i.e., $u(S_0)$ is a point.  
Let the bare (positive symplectic area) part of $u$ be $u_+\colon (S_+, \partial S_+) \to (X, L)$.

Then the map $u_{+}$ has a triple point, or a double point with linearly dependent tangents, or there is a point $\zeta \in S_+$ where $S_0$ is attached and where the differential of $u_{+}$ at $\zeta$ vanishes, $du_{+}(\zeta)=\partial_J u_+(\zeta) = 0$.  
\end{theorem}

Let us mention that the above result has many related antecedents: 
\cite{Zinger-reduced, Zinger-reduced-CY, Hu-Li-reduced, RMMP}.

\subsection{Somewhere injectivity for curves in basic homology classes}\label{sec : somewhere inj}
In this section we establish injectivity properties for holomorphic curves. Recall that we call a holomorphic curve $u\colon (S,\partial S)\to (X,L)$ bare provided all its irreducible components have positive area. We say that $(u,S)$ is somewhere injective if in any irreducible component of $S$ there is a non-empty open subset $U$ such that $u|_U$ is an embedding and $u^{-1}(u(U))=U$.    

We start with the exact case. Consider an exact symplectic $2n$-manifold $X$ with symplectic form $\omega=d\lambda$, which outside a compact subset has the form $[0,\infty)\times Y$, where $Y$ is a contact $(2n-1)$-manifold with contact form $\alpha$ and where $\lambda=e^t\alpha$ for $t\in[0,\infty)$. Let $L=L_0\cup L_1\cup\dots\cup L_m$ be an asymptotically cylindrical embedded Lagrangian in $X$ where $L_0\subset X$ is a compact exact Lagrangian and each component $L_j$, $1\le j\le m$, of $L$ has topology $S^1\times\R^{n-1}$ and ideal boundary a Legendrian $S^1\times S^{n-2}$. The condition that $L$ is asymptotically cylindrical is the following. Fix a complex structure $J_\alpha$ on the contact plane and consider the $\R$-invariant complex structure $J_{\R}$on $[0,\infty]\times Y$ given by $J_\alpha$ on the contact planes and such that $J\partial_t=R_\alpha$, where $R_\alpha$ is the Reeb vector field of $\alpha$ on $Y$. Together with the symplectic form $d(e^t\alpha)$, $J_{\R}$ induces a Riemannian metric $d(e^t\alpha)(\cdot,J\cdot)$ and we say that $L$ is asymptotically cylindrical with ideal boundary a Legendrian $\Lambda\subset Y$ if there exists $T>0$ such that the distance between $L\cap [T,\infty)\times Y$ and $[T,\infty)\times \Lambda$ is bounded. Consider a homology class $\beta\in H_2(X, L\cup S)$. 

\begin{definition}\label{def : basic class}
The class $\beta\in H_2(X,L\cup S)$ is a \emph{basic class} if its image under the connecting homomorphism $\partial\colon H_2(X,L\cup S)\to H_1(L)$ projects to a generator $\gamma_j\in H_1(L_j)$ for $1\le j\le m$ (note $H_1(L_j)=\Z$ here) and if
\[
    \int_{\gamma_j}\lambda>0.
\]
\end{definition}

Consider the case when $L=L_0\cup L_1$ has only one non-compact component.
\begin{lemma}\label{l : somwhere injectivity exact}
Let $L\subset X$ be as above. If $J$ is an almost complex structure on $X$ that is uniformly compatible with $\omega$ then any bare $J$-holomorphic curve with boundary on $L$ in a basic homology class is injective in the complement of finitely many points and in particular somewhere injective. 
\end{lemma} 

\begin{proof}
The proof is similar to \cite[Theorem 2.9]{D-RET}. Let $u\colon (S,\partial S)\to (X,L)$ be a $J$-holomorphic map in a basic homology class $\beta$ with $\partial\beta=\gamma_0+\gamma_1\in H_1(L_0)\oplus H_1(L_1)$. Then $u$ has area $a=\int_{\gamma_1}\lambda>0$ and is absolutely area minimizing (with respect to the metric $\omega(\cdot,J\cdot)$) in its homology class.   

Consider the image $C=u(S)$ as a rectifiable current. By unique continuation for holomorphic maps, $C$ can be written as
\begin{equation}\label{eq:integralcurrentdecomp} 
	C= C_{1}+ 2C_{2} +\dots + mC_{m},
\end{equation}  
where $C_{j}$ is the subset of $C$ where the multiplicity equals $j$. It is clear that each $C_{j}$ is a holomorphic rectifiable current. 
	
Assume that $u$ is not somewhere injective. Then $C_{1}=\emptyset$ in \eqref{eq:integralcurrentdecomp}. We show below that $C_{m}$ is an integral current, i.e.~that $\partial C_{m}$ is rectifiable and contained in $L$. But then the area of $C_m$ is at least $a$ which contradicts \eqref{eq:integralcurrentdecomp}. It follows that the maximal multiplicity $m$ equals $1$, hence $C=C_1$ as claimed.

We check that $C_m$ is an integral current.
We first show that the boundary of $C_{m}$ is contained in $L$. Assume that $p\notin L$ lies in the support of $C_{m}$. By Federer's refinement of Sard's theorem \cite[Theorem 3.4.3]{Federer} we may assume that $C_{m}$ is a smooth 2-dimensional submanifold around $p$. By unique continuation it follows that there is a disk $\Delta$ centered at a point in $u^{-1}(p)$ such $u(\Delta)$ lies in the support of $C_{m}$. Hence $p$ does not lie on the boundary of $C_{m}$ and its boundary is contained in $L$. 
	
Consider a point $q\in L$ on the boundary of $C_{m}$. Again by the refinement of Sard's theorem,  one may assume that $q=u(z)$ is a regular value of $u|_{\partial S}$. By unique continuation, the boundary of $C_{m}$ consists of an arc in $L$ around $q$. It follows by monotonicity for minimal surfaces that the total length of the boundary arcs in the boundary of $C_m$ is finite. Thus $\partial C_{m}$ is a rectifiable curve in $L$ and hence $C_{m}$ is an integral current. 
\end{proof}

We next consider a generalization of Lemma \ref{l : somwhere injectivity exact} where $X$ is allowed to be non-exact. We take $X$ to be asymptotically cylindrical at infinity and assume that in the cylindrical end $[0,\infty)\times Y$, the symplectic form $\omega$ on $X$ has the form $\omega=d(e^t\alpha)+\omega_0$, where $\alpha$ is a contact form on $Y$ and $\omega_0$ is bounded. We consider Lagrangians $L=L_1\cup\dots L_m$ in the cylindrical end, $L\subset [0,\infty)\times Y$, as before with components of topology $S^1\times\R^{n-1}$ and asymptotically cylindrical at infinity. (The previously considered compact Lagrangian $L_0$ is now empty). 

\begin{lemma}\label{l : omega0 small}
If $\omega_0$ is sufficiently small then there exists an isotopy $L^s$ of $L=L^0$ in a cotangent neighborhood of $L$ such that $L^s$ is Lagrangian with respect to $d(e^t\alpha)+(1-s)\omega_0$.   
\end{lemma}

\begin{proof}
This is a consequence of Moser's trick. Fix a cotangent neighborhood of $L$ and let $\omega(s)=d(e^t\alpha)+(1-s)\omega_0$. Then $\frac{d}{ds}\omega(s)=-\omega_0$. Since $\omega_0$ is closed in $T^\ast L$ is also exact, $-\omega_0=d\beta$. Then the flow of the time dependent vector field $X(s)$ that satisfies $\beta=\omega(s)(X(s),\cdot)$ gives the desired isotopy.
\end{proof}

We consider basic classes in the non-exact setting. Let $\beta\in H_2(X,L)$ and $\partial\beta\in H_1(L)$ with components $\gamma_j\in H_1(L_j)$ that are generators. Assume that $\omega_0$ is sufficiently small so that Lemma \ref{l : omega0 small} holds and let $L^s$, $0\le s\le 1$, be an isotopy as there so that $L^1$ is Lagrangian with respect to $d(e^t\alpha)$. Let $\gamma_j^1$ be the image of $\gamma_j$ in $H_1(L^1_j)$ under the isotopy. We say that $\beta$ is a \emph{basic class} if 
\begin{equation}\label{eq : basic non-exact}
\int_{\gamma_j^1} \alpha>0,
\end{equation}
for all $j$.

Assume that $L$ lies in the cylindrical end $L\subset [2,\infty)\times Y$ and that the almost complex structure $J=J_{\R}$ in $[0,1]\times Y$ is an $\R$-invariant almost complex structure compatible with $d(e^t\alpha)$. Assume that $L=L^0$ admits an isotopy $L^s$ as in Lemma \ref{l : omega0 small}. If $L$ has more than one component we furthermore assume that there are translation invariant nested subsets $N_j\subset N'_j\subset [2-\delta,\infty)\times Y$, such that $L_j\subset N_j$, where $L_j$, $j=1,\dots,k$ are the components of $L$, and such that no holomorphic curve intersects $N'_j\setminus N_j$.   

\begin{lemma}\label{l : somewhere injective non-exact}
If $L$ and $J$ is as above then any bare $J$-holomorphic curve in a basic class is injective on the boundary in the complement of finitely many points and in particular somewhere injective. 
\end{lemma}

\begin{proof}
Consider first the case of one component. Let $\psi_s\colon X\to X$ be a 1-parameter family of diffeomorphisms equal to the identity outside a neighborhood of $L$ and such that $\psi_s(L)=L_s$. Write $J_+$ for the almost complex structure $d\psi_1^{-1}\circ J\circ d\psi$ in $[\frac32,\infty)\times Y$. Note that if $\omega_0$ is sufficiently small compared to $d(e^t\alpha)$ then $\psi_1$ is close to the identity and $J_+$ is compatible with $d(e^t\alpha)$ in $[1,\infty)\times Y$.

Let $u\colon (S,\partial S)\to (X,L)$ be a $J$-holomorphic map. Then $\psi_1\circ u\colon (S,\partial S)\to (X,L_1)$ is a $J_+$-holomorphic map. Let $\phi\colon[0,\infty)\to[0,1]$ be a smooth function equal to $0$ in $[0,1]$ and equal to $1$ in $[2,\infty)$. 

For $T>0$, consider the map 
\begin{equation}\label{eq: shift vertically}
\phi_T\colon [0,\infty)\times Y\to [0,\infty)\times Y,\quad (t,y)\mapsto (t+T\phi(t),y).
\end{equation}
Consider the almost complex structure $J_T$ that is $\R$-invariant in the region $[0,T]\times Y$ and equal to $J_+$ in $[T,\infty)\times Y$. It is easy to see that the $J_T$ is compatible with $d(e^t\alpha)$. 

Consider the $J_T$-holomorphic map $\phi_T\circ u$ and apply the argument of the proof of Lemma \ref{l : somwhere injectivity exact} to get the decomposition of the image current $C=\phi_T\circ u(S)$:
\begin{equation}\label{eq : multiplicity decomposition}
C=C_1+ 2C_2 +\dots +m C_m.
\end{equation}
The area of any closed component of $C_j$ is unaffected by $\phi_T$ and the total area of $C$ grows linearly with $T$. At least one of the integral currents $C_k$ must have boundary $\partial C_k$ in a homology class that is a multiple of the generator $\gamma$ of $H_1(L)$ on which the flux $\mathfrak{a}_T$ of $\phi_T$ is positive, but then, since $k\ge 2$ we would find that for $T$ sufficiently large, the area of $C$ is at least $\frac74\cdot \int_{\gamma} (e^t\alpha)$. However, the curve $\phi_T\circ u$ lies in a basic homology class and the area is $\le \frac54\int_{\gamma} (e^t\alpha)$. We conclude that only $C_1$ has boundary on $L$. The lemma follows.  

In the case of many components, we show that the boundary of each $C_k$ has only positive components. We argue by contradiction. Assume that $C_k$ has boundary component in $L_j$ that is negative. Apply \eqref{eq: shift vertically} with $\phi_T$ as there and in addition cut off from $1$ to zero in $N_j'\setminus N_j$. Since no curve passes the cut off region the resulting curve is $J_T$-holomorphic but will have negative area for sufficiently large shift. We conclude that all $C_k$ have only positive boundary components. It is then immediate from the decomposition \eqref{eq : multiplicity decomposition} that $C=C_1$. The lemma follows.
\end{proof}

We finally show that neighborhoods as required in Lemma \ref{l : somewhere injective non-exact} for many component Lagrangians exists for link conormals provided the shift is suffficietly small. 

\begin{lemma}\label{l : nested nbhds for links}
Let $K$ be a $m$-component link $m\ge 2$. Then there exists $\epsilon_0>0$ such that for all $\epsilon<\epsilon_0$, $L_{K;\epsilon}$ admits neighborhoods $N_{j}\subset N_j'\subset [\epsilon/2,\infty)\times ST^\ast S^3$ such that no holomorphic curve in a basic homology class intersects $N_j'\setminus N_j$. 
\end{lemma}

\begin{proof}
Take $\epsilon$ so that $\epsilon^{1/8}$ is smaller that the minimal distance between the links. Let $N_j\subset N'_j$ be $\epsilon^{1/4}$-neighborhoods of the lift of the knot shifted by its tangent vector to $S_\epsilon T^\ast S^3$. By monotonicity, a curve in $N'_j\setminus N_j$ has area $\ge c \epsilon^{1/2}$ for some $c>0$. Since the area of a curve in the basic homology class equals $\mathcal{O}(\epsilon)$ the lemma follows.  
\end{proof}

\section{Skein valued curve counts}
In this section we collect properties of holomorphic curves and show how they lead to deformation invariant skein valued curve counts.

By stable map, we mean as usual a holomorphic map with finitely many automorphisms.\footnote{Some authors also allow maps with compact automorphism groups, which means they consider the disk with one marked point stable. For those authors, the limit of the elliptic degeneration is a curve with a ghost such disk attached.  For us, the corresponding limit is instead a curve with an `elliptic node' as modeled in Remark \ref{r : ell node as node}, and as used in the gluing analysis in Sections \ref{sssec : en} and \ref{sssec : pre glue ell}.} 

\begin{theorem}\label{thm:somewhereinjective}
Let $(X,L)$ be as in Section \ref{sec : somewhere inj} and let $A\in H_{2}(X,L)$ be a basic homology class.
Suppose $\mathcal{J}$ is a subset of the space of almost complex structures that gives transversality for holomorphic curves in class $A$.  
Let $\M(J)$ for $J\in\mathcal{J}$ be the moduli space of bare stable $J$-holomorphic maps.  Then the following hold. 

\begin{enumerate}
\item 
\label{prop:coherence} 
	{\bf Coherence.} For any $J \in \Jj$, we have $u \in \M(J)$ if and only if $\tilde{u} \in \M(J)$. 
\item 
\label{prop:codimzero}
{\bf Codimension zero transversality.}  
	There is a nonempty subset $\Jj^\circ \subset \Jj$ such that for $\lambda \in \Jj^\circ$, 
	\begin{enumerate}
	\item \label{prop:codimzerocompact} 
		$\M_{g,h}(J)$ is compact. 
	\item \label{prop:isolated} 
		$\M(J)$ is an oriented zero-manifold.
	\item \label{prop:embedded} 
	        Any map $u$ corresponding to a point in $\M(J)$ is an embedding of 
	        a smooth curve
	        and is transverse to $L$ in the sense of Definition \ref{transversetolag}.  In particular, $u$ is bare.  
	\item \label{prop:trivialcobordism}
		For any path $J\colon[0,1]\to\Jj^\circ$, the map $\M(J) \to [0,1]$ is a proper cover 
		of  manifolds, and $\M(J)$ can be oriented so that the orientations of $\M(J_0), \M(J_1)$ are recovered from the boundary orientation.  
	\end{enumerate}
\item \label{prop:codimone} {\bf Codimension one transversality.}
	Any two points $J_0, J_1 \in \Jj^\circ$ can be connected 
	by a path $J\colon[0,1]\to \Jj$ such that: 
	\begin{enumerate}
	\item \label{prop:codimonecompact} 
		$\M_{g,h}(J)$ is compact.  		
	\item \label{prop:boundarycobordism} 
        $\M(J)$ is an  1-manifold with boundary, to which  
        the orientation 
        on $\M(J)|_{\mathcal{J}^0}$  from \eqref{prop:trivialcobordism}
        extends.  Boundary  points over the interior $(0,1)$ are precisely the maps in $\mathcal{M}(J)$ with nodal domain.
	\item \label{prop:onecrossingatatime}
        The locus of $t \in [0,1]$ so where \eqref{prop:trivialcobordism} or \eqref{prop:embedded} fails is discrete, and the failure over some $t_0$ occurs at only point of $\mathcal{M}(J_{t_0})$. If \eqref{prop:trivialcobordism} fails at $[u]$, we term it a \emph{critical point}; if \eqref{prop:embedded} fails, 
		  we term it 
		  a \emph{crossing}. 
	\item \label{prop:crossingtypes} 
		The universal map over a neighborhood of a crossing $[u] \in U \subset
		\Mm[0,1]$ takes one of the following forms: 
		\begin{enumerate}
		\item \emph{Hyperbolic crossing}.  
			The map $u$ is an immersion everywhere and an 
			embedding save at two points $s_1\ne s_2$ in the boundary such that $u(s_1)=u(s_2)$. We require that the images of the two boundary 
			tangent vectors at $s_1$ and $s_2$ are linearly independent from each other and from $\xi$, and 
			that they together with the first order variation of the $1$-parameter family at the double point, $\partial_t u(s_1)-\partial_t u(s_2)$, span the tangent space of $L$.  
		\item \emph{Elliptic crossing}.  
			The map $u$ is an embedding, but some interior point $s$ of the domain is 
			mapped to $L$.  The map at this point is transverse to the 4-chain $C$. 
			At the crossing moment the tangent space of the curve and the tangent 
			space of $L$ together with the first order variation of $1$-parameter 
			family $\partial_t u(s)$ span the tangent space of $X$. 
		\item \emph{Framing change}: 
			$u$ is an embedding, but 
			$\partial u$ becomes tangent to $\xi$ or intersects $\gamma$ generically (as 
			in Lemmas \ref{l:throughgamma} or \ref{l:tangency}). 
		\end{enumerate}
	\end{enumerate}
\item \label{prop:standard} {\bf Gluing.} 
	Let $[u] \in \M(t_0)$ be an elliptic or hyperbolic 
	crossing as in $\mathrm{(3d)}$ above.  Let $v = u^\bullet$ be the map with the same image which is an embedding of a nodal curve.  Let $\M_u$ and $\M_v$ be small neighborhoods of $[u]$ and $[v]$ in $\M(t_0 - \epsilon, t_0+\epsilon)$,  
	and let $u_t,v_t$ be the corresponding families of maps. Then there are neighborhoods $D, D^\bullet$ of the crossing point and node such that $(u_t(D), v_t(D^\bullet))$ is a standard
	hyperbolic or elliptic degeneration, in the sense of 
	Definition \ref{standard hyperbolic degeneration} or \ref{standard elliptic degeneration}, respectively.  
    
    Moreover, the orientations of $\M_u$ and $\M_v$, see Section \ref{sec : basic orientations}, are related as follows: the moduli space $\M_v$ is non-empty over a half-interval $(t_0-\epsilon,t_0]$ or $[t_0,t_0+\epsilon)$, over which we may compare its orientation with $\M_u$ using the projections of both to $(t_0-\epsilon, t_0+\epsilon)$. 
    We require that there is a global sign $\sigma=\pm 1$ such that the orientations of $\mathcal{M}_u$ and $\mathcal{M}_v$ differ by the product $\sigma\cdot\nu(u_t)$ where $\nu(u_t)$ is the local crossing sign of $u_t$, 
    i.e., $\nu(u_t)$ is the crossing sign between boundary arcs in the hyperbolic case and between $L$ and the curve
    in the elliptic case.  
\end{enumerate}
\end{theorem}
\begin{proof}

\begin{enumerate}
\item Holds trivially at all $J\in\mathcal{J}$. 
\item Define $\mathcal{J}^{\circ}$ as the subset of generic $J\in\mathcal{J}$ where every bare solution is transversely cut out, is embedded and in particular has boundary an embedded link, is transverse to the 4-chain $C$, and has interior disjoint from $L$. Then $\mathcal{J}^{\circ}$ is open and dense by Lemmas 
\ref{l:genframing}, \ref{l:genemb}, and \ref{l:gen4chain}.  By transversality our solution space is an 
oriented 0-manifold. 
Evidently Property \eqref{prop:isolated} is satisfied.  Property \eqref{prop:embedded} holds by request, and Property \eqref{prop:trivialcobordism} by transversality.

We turn to property \eqref{prop:codimzerocompact}.  
It is {\em not} simply a consequence of Gromov compactness, since we have taken 
$\M(J)$ the moduli space of bare curves, and not all curves.  That is, by Gromov compactness we know that 
a sequence of bare curves must converge to some stable map; it remains to exclude the possibility 
that this limit is a non-bare curve.  But by Theorem \ref{lem:compactness}, such a limit would have 
underlying bare curve a singular $J$-holomorphic curve, which contradicts  property \eqref{prop:embedded}.  

\item Lemmas \ref{l:genframing}, \ref{l:genemb},  and \ref{l:gen4chain} classify the degenerations possible in a 
generic one parameter family.  Properties (3b) - (3e) follow immediately.

We turn to Property \eqref{prop:codimonecompact}.  It is {\em not} simply a consequence of Gromov compactness, since we have taken $\M(J)$ the moduli space of bare curves, and not all curves.  By Gromov compactness, a 1-parameter family will have a limit, which
may however be a non-bare curve.  However, from Theorem \ref{lem:compactness}, a limiting non-bare curve will 
either have a non-immersed point or a triple point in the image of its bare component.  By coherence (property \eqref{prop:coherence}), this means that at this $J$, we have a map from a smooth domain with either a triple point or a non-immersed point.  This contradicts property \eqref{prop:crossingtypes}. 

\item This follows from Theorem \ref{t : wall crossing gluing} and Lemma \ref{l : same orientations elliptic and hyperbolic}. 
\end{enumerate}
\end{proof}

We are now in a position to define the invariant.

\begin{dfn}\label{eq:invtdef}
Let $X$ be a 3-dimensional Calabi-Yau manifold and let $L=L_0\cup L_1\cup\dots\cup L_m$ be as in Section \ref{sec : somewhere inj} with Maslov index zero and a brane structure in the sense of Definition \ref{brane}.  Fix a basic class $d \in H_2(X, L)$. Let $\Sk(L)$ be the skein of $L$, in variables $(a,z)$, $a=(a_{1},\dots,a_{k})$.  
For $J \in \Jj^{\circ}(X, L)$, we define 
\begin{equation*}  
Z_{X,L,d;J} \ = \
\sum_{(u,S) \in \M(X,L, d, J)} \!\!  w(u)   \cdot z^{-\chi(S)} \cdot a^{u \OpenHopf L} \cdot
\langle\partial u\rangle \ \in \ \Sk(L) 
\end{equation*} 
\end{dfn}

The sum is over a  finite set of points for each coefficient of $z$ by Properties 
\eqref{prop:codimzerocompact}, \eqref{prop:isolated}, with the second of these providing the function $w$. 
By Property \eqref{prop:embedded}, $\partial u$ is a framed link in $L$, so we may regard it as an element
of the skein.  Note that in any given homology class, the Euler characteristic of (even a disconnected) representative
is bounded above, so the invariant is a Laurent series in $z$. 

We emphasize that we count {\em only bare maps} from possibly disconnected curves.  That is, the 
image of each irreducible component has nonzero symplectic area.

\begin{theorem} \label{thm:invariance}
Under the hypotheses of Theorem \ref{thm:somewhereinjective},
$Z_{X,L,d;J}$
is independent of $J \in \Jj^\circ(X,L)$. 
\end{theorem}

\begin{proof} 
We will use only the properties enumerated in Theorem \ref{thm:somewhereinjective}.

It follows from Properties \eqref{prop:trivialcobordism} and  \eqref{prop:boundarycobordism} that
$Z_{X,L,d;\lambda}$
is locally constant in $\Jj^\circ(X,L)$.  Consider now any two points $J_0, J_1 \in \Jj^\circ$, 
and a path $\lambda_t$ connecting them satisfying Property \eqref{prop:codimone}.  
We want to study the $t$ with $J_t \notin \Jj^\circ$, in order to show that also
the count 
$Z_{X,L,d;J}$
does not change when passing these. 

Note that to prove the desired 
constancy of 
$Z_{X,L,d;J}$, it is enough to work order by order in $z$. 
Bounding the order of $z$ bounds the topological type, thus the set of walls $t_0$ we must consider is finite by 
Property \eqref{prop:onecrossingatatime}.  Fix one such $t_0$. 

As $t \to t_0$, by \eqref{prop:trivialcobordism}, the curve counts are locally constant, and the moduli space
is just a disjoint union of intervals.  By compactness \eqref{prop:codimonecompact}, each such interval
has a limit at $t_0$.   Consider all components for which the limiting curve remains smooth, 
embedded, transverse to $L$; i.e., \eqref{prop:embedded} remains true at the limit.  Then, even if 
\eqref{prop:trivialcobordism} fails, nevertheless
\eqref{prop:boundarycobordism} suffices to ensure that these components of moduli have the same contribution
on both sides of $t_0$. 

Let us consider the remaining components, i.e., those for which the limiting curve fails \eqref{prop:embedded}. 
Taking the normalization gives a map from a smooth domain, which is strictly greater Euler characteristic, so 
among those we are considering at the current order in $z$.   Recall there is a unique such map, as per
\eqref{prop:onecrossingatatime}, and that it must take one of the forms classified by Property \eqref{prop:crossingtypes}. 
Note that since \eqref{prop:embedded} fails, we have demanded that \eqref{prop:trivialcobordism} does not; 
i.e. the component of moduli containing this map from a smooth domain locally projects to $[0,1]$ by an isomorphism.

For crossings of `framing change' type, by applying Lemmas \ref{l:throughgamma} and \ref{l:tangency}, and 
comparing to the framing change skein relation, we see that the term 
$a^{u \OpenHopf L} \cdot \langle\partial u\rangle$ is itself invariant. 

We now turn to hyperbolic and elliptic crossings.
First, the hyperbolic case.  
Let $u$ be the map with smooth domain and nodal image, and 
$u_\times$ the corresponding embedding of a nodal curve. 
We will write $u_{\pm \epsilon}$ for the curves immediately
before and after $u$ in its family, and
$u_{\smoothing}$ for the deformation of $u_\times$ which
appears on one side (either before or after).  We write 
$w(u_\bullet)$ for the degree of the 0-chain on 
the corresponding moduli space. 

According to Property \eqref{prop:onecrossingatatime}, curves unrelated to $u$
undergo no critical moments.  The 4-chain intersection and 
$H_2(X, L)$ of all the $u_\bullet$ are the same.  Thus 
the total change in 
$Z_{X,L,d;\lambda}$
is a multiple of 
$$
w(u_\epsilon) z^{-\chi(u_\epsilon)}\langle \partial u_\epsilon \rangle \
-  \ w(u_{-\epsilon})z^{-\chi(u_{-\epsilon})} \langle \partial u_{-\epsilon} \rangle 
\pm \ w(u_\smoothing)z^{-\chi(u_\smoothing)}  \langle \partial u_{\smoothing} \rangle.
$$
By Properties \eqref{prop:boundarycobordism} and \eqref{prop:trivialcobordism},
$w(u_\epsilon) = w(u_{-\epsilon})$.  By Property \eqref{prop:standard} 
this quantity is also equal to $w(u_\smoothing)$.  
In addition, note that
$\chi(u_\epsilon) = \chi(u_{-\epsilon}) = \chi(u_\smoothing) + 1$. 
Thus the above discrepancy is a multiple of
\begin{equation} \label{hypskein}
\langle \partial u_\epsilon \rangle  \ - \ \langle \partial u_{-\epsilon} \rangle \ \pm \ 
z \langle \partial u_{\smoothing} \rangle,
\end{equation}
Recall that Property \eqref{prop:standard} asks that 
the crossing moment is locally given by the model 
in Section \ref{sec:models}, in the sense of Definition
\ref{standard hyperbolic degeneration}.  
As explained in Section \ref{ssec: hyp boundary}, 
the images of the boundary under 
$u_{\pm \epsilon}$ differ precisely by a crossing change
$\overcrossing  \leftrightarrow  \undercrossing$, whereas
the boundary of $u_\smoothing$ is correspondingly the 
$\smoothing$.  

Now let us consider the elliptic crossing.  We use similar notations
as above, save writing $u_\bigcirc$ instead of $u_\smoothing$. As before, Property \eqref{prop:standard} asks that the crossing moment is locally given by the model 
in Section \ref{sec:models}, in the sense of Definition
\ref{standard elliptic degeneration} and the same considerations apply, with one exception: 
$u_{\pm \epsilon}$ have one more or one less intersection 
with the 4-chain than $u_\circ$ does.  Thus now the relation is
$$ 
a^{\pm 1} \langle \partial u_\epsilon \rangle  \ - \ a^{\mp 1} \langle \partial u_{-\epsilon} \rangle \ \pm \ z \langle \partial u_{\bigcirc} \rangle.
$$
As explained in Section \ref{ssec: ell boundary},
the difference between the boundaries is that 
$\partial u_\bigcirc$ has an extra unknot as compared to
$\partial u_{\pm \epsilon}$, the latter two being isotopic to each other. 
Thus, the above expression 
is a multiple of 
\begin{equation} \label{ellskein}
a \ - \ a^{-1} \ \pm \ z \langle \bigcirc \rangle.
\end{equation}

Finally, by Property \eqref{prop:standard} the signs in \eqref{hypskein} and \eqref{ellskein} agree.
Thus, changing $z$ to $-z$ in our original conventions if necessary, 
we may ensure that the sign in formula \eqref{hypskein} is $+$ and in formula \eqref{ellskein} is $-$. 
With this choice, formulas \eqref{hypskein} and \eqref{ellskein} are simply the skein relations, hence zero in the skein. 
\end{proof}

\section{SFT and the conifold transition} \label{sec:conifold} 

In this section we compare curve counts in $T^* S^3$, in $T^*S^3 \setminus S^3$, and in the 
resolved conifold $X$.  The key ingredient is SFT  compactness and the stretching arguments it makes possible.   

\subsection{The conifold transition}

Let us review the conifold transition.  Consider the locus $X_0 = \{w^2 + x^2 + y^2 + z^2 = 0\}$ in $\C^4$.  On the 
one hard, this locus is the image of the total space of the bundle $X = \Oo(-1) \oplus \Oo(-1)$ over $\C\P^1$, under a map which collapses 
the $\C\P^1$ and is elsewhere an embedding.   On the other hand, there is the deformation $X_\epsilon = \{w^2 + x^2 + y^2 + z^2 = \epsilon\}$, which
is symplectomorphic to $T^* S^3$.  The conifold transition means we replace one with the other. 

From a symplectic point of view, $X_\epsilon$ provides an exact symplectic filling of the contact cosphere bundle $S^* S^3$:  outside of a compact set, $X_{\epsilon}$ agrees with the positive symplectization $[0,\infty)\times ST^{\ast}S^{3}$ with its standard symplectic form $d(e^{t}\,pdq)$, where $pdq$ is the action form restricted to $ST^{\ast}S^{3}$ and $t$ a coordinate on $[0,\infty)$. The resolved conifold is not quite a symplectic filling of $ST^{\ast} S^{3}$. However, far from the central $\C\P^{1}$, also $\tilde X$ looks topologically like $[0,\infty)\times ST^{\ast}S^{3}$ and the symplectic form is deformation equivalent to $d(e^{t}\,pdq)+\omega$, where $\omega$ is independent of $t$. For large $t$ this can be viewed as a small perturbation that vanishes in the limit $t\to\infty$. 

We say that $X$ is an asymptotic symplectic filling of $ST^{\ast}S^{3}$.  More generally, a symplectic manifold with positive end which is analogously asymptotic to a symplectization of a contact manifold is said to be asymptotically convex at infinity. In this case,  we say that a Lagrangian $L\subset X$ is asymptotically Legendrian at infinity if it is, near infinity,  obtained by fixed non-exact deformation of the Lagrangian cylinder $\R\times\Lambda$.  The fixed amount by which we deform is exponentially small with respect to the symplectic form $d(e^t pdq)$ as $t\to\infty$. 
Note then that, in sufficiently large disk bundles there is an arbitrarily small change of the $\R$-invariant symplectization almost complex structure (for instance, given by conjugation by a map taking a Lagrangian to its shift) making the original $\R$-invariant curves holomorphic with the non-exact boundary condition, see also \cite{Koshkin} for basic results about holomorphic curves in this setting.

\subsection{SFT stretching}\label{ssec : SFT stretch}
In this section we review SFT-stretching in the context we will require it. A complete account can be found in \cite{BEHWZ}.

Consider a symplectic manifold $(X,\omega)$ with a Lagrangian submanifold $L$.  Let $Y\subset X$ be a closed co-oriented codimension $1$ contact hyper-surface. This means that there is a conformally symplectic vector field $Z$ (i.e., $L_Z\omega=\omega$) defined in a neighborhood of $Y$ such that $Z$ is everywhere transverse to $Y$. Then $\alpha=\iota_Z\omega$ is a contact form on $Y$. If $Y\times(-\delta,\delta)$ is the neighborhood of $Y$ obtained by flowing $t\in(-\delta,\delta)$ units along $Z$ with initial condition in $Y$ then the symplectic form has the form 
\[
\omega=d(e^t\alpha)=e^t(dt\wedge\alpha + d\alpha)
\]
in $Y\times(-\delta,\delta)$. We assume that the intersection $\Lambda=L\cap Y$ is a Legendrian submanifold (i.e., $\alpha|_\Lambda=0$) and that $L$ is invariant under the flow of $Z$ in some flow neighborhood of $Y$. 

We assume that the conact form $\alpha$ on $Y$ is Morse (or Morse-Bott). This means that its Reeb orbits (flow loops of $R_\alpha$) and Reeb chords (flow lines of $R_\alpha$ connecting $\Lambda$ to itself) are isolated and transversely cut out (or come in smooth families that are Bott-transverse). This condition holds after small perturbation of $Y$ or $Z$. We recall that the action of a Reeb orbit or chord $\gamma$ is $\int_\gamma\alpha$.

Assume that $Y$ separates $X$. Then $X\setminus Y= X^{\circ,+}\cup X^{\circ,-}$ where $X^{\circ,+}$ ($X^{\circ,-}$) is the component where the vector field $Z$ point inward (outward) along the boundary. Let $X^\pm$ be the closure of $X^{\circ,\pm}$. Fix $s>0$ and $0<\epsilon<\delta$ and consider the three symplectic manifolds with symplectic forms as indicated:
\begin{align*} 
X^{+}_{s} &= X^+\cup_Y  ((-s-\epsilon,0]\times Y);\quad \omega \text{ on } X^+,\; d(e^t\alpha) \text{ on } (-s,0]\times Y,\\
X^{0}_{s} &= (-s-\epsilon,s+\epsilon)\times Y;\quad d(e^t\alpha) \text{ on } (-s,s)\times Y, \\
X^{-}_{s} &= X^-\cup_{Y} ([0,s+\epsilon)\times Y);\quad \omega \text{ on } X^-,\; d(e^t\alpha) \text{ on } [0,s)\times Y.
\end{align*}

Define 
\[ 
X_{s}= X^{-}_{s}\cup_{\tau_{2s}} X^{+}_{s},   
\]
where
\[
\tau_{2s}\colon (s,s+\epsilon)\times Y\to (-s-\epsilon,s)\times Y,\quad 
\tau_{2s}(\sigma,y)=(\sigma-(2s+\epsilon),y).
\]
Note that $\tau_{2s}$ is the conformally symplectic map that intertwines the symplectic structure 
$d (e^{\sigma}\alpha)$ with $d (e^{\sigma-(2s+\epsilon)}\alpha)$. 

Let $J^\pm$ be the following almost complex structure on $X_s^\pm$. Consider the cylindrical pieces in $X_s^{\pm}$ as subsets of $\R\times Y$ and take $J^\pm$ on these cylindrical pieces to be invariant under translation in the $\R$-direction, such that $J^\pm(\mathrm{ker}(\alpha)\to \mathrm{ker}(\alpha)$ and $J^\pm$ both agree with some complex structure $J_{\mathrm{ker}(\alpha)}$ on $\mathrm{ker}(\alpha)$ that is compatible with $d\alpha$, and
$J\partial_\sigma=R_\alpha$, where $\alpha$ is the contact form on $Y$ and $R_\alpha$ its Reeb vector field (uniqely defined by $\iota_{R_\alpha}d\alpha=0$, $\alpha(R)=1$). On $X^\pm$, $J$ is a fixed (independent of $s$) almost complex structure compatible with $\omega$. Let $J_s'$ be the almost complex structure on $X_s$ that agrees with $J^\pm$ on $X^\pm_s$.  

We identify $X_0$ with $X$ and define diffeomorphisms $\phi_{s}\colon X_{s}\to X$ by shrinking $X^{0}_{s}$ to $X^{0}_{0}$. We transport $J'_s$ to a family $J_{s}$ of almost complex structures on $X$, all compatible with the symplectic structure $\omega$: $J_{s}=d\phi_{s}\circ J'_{s}\circ d\phi_{s}^{-1}$.

We state the specialization of the SFT-compactness theorem to our situation.    
\begin{theorem}\label{thm:SFTlimit}\cite[Theorem 10.3, Section 11.3]{BEHWZ}
Let $u_{s}$, $s\to\infty$ be a sequence of $J_{s}$-holomorphic curves with boundary on $L$ and of uniformly bounded area. Then there exists a subsequence $u_{s'}$ that converges to a holomorphic building in $X^{-}_{\infty}$, $X^{0}_{\infty}$, and $X^{+}_{\infty}$, where the levels are joined at Reeb orbits and Reeb chords of $\Lambda$ in $Y$. Furthermore, the total action of the Reeb orbits and chords at the negative ends of the curves in $X_{\infty}^{+}$ is bounded in terms of the symplectic area of the curves. \qed 
\end{theorem}

We will use SFT-stretching Theorem \ref{thm:SFTlimit} to deform complex structures near Lagrangian submanifolds that are either compact or asymptotically cylindrical. 

\subsubsection{SFT-stretching near compact Lagrangians}\label{ssec:SFTlimitcompactL}
If $L\subset X$ is a compact Lagrangian submanifold then $L$ has a neighborhood symplectomorphic to a $\delta$-neighborhood of the $0$-section in its cotangent bundle $T^\ast L$ with the standard symplectic structure $d(pdq)$. Consider a Riemannian metric on $L$. The $\epsilon$-cosphere bundle $S_\epsilon T^\ast L=\{p^2=\epsilon^2\}$ is then a contact hypersurface with conformally symplectic vector field $p\cdot \partial_p$ and with Reeb flow given by the natural lift of the geodesic flow on $L$. Here the action of a Reeb orbit is equal to ($\epsilon$ times) its length and the Conley-Zehender index of a Reeb orbit equals the Morse index of the underlying geodesic. 

The three symplectic manifolds that results form stretching around $L$ are $X^+_\infty= X\setminus L$, with a negative end $(-\infty,0]\times ST^\ast L$ near $L$, $X^0_\infty=\R\times ST^\ast L$, and $X^-_\infty=T^\ast L$.  


\subsubsection{SFT-stretching near cylindrical Lagrangians}\label{ssec:SFTlimitnoncompactL}
Let $X$ be an asymptotically cylindrical symplectic manifold with asymptotic cylindrical end $[0,\infty)\times Y$ and let $L\subset X$ be an asymptotically cylindrical Lagrangian with cylindrical end $[0,\infty)\times\Lambda$.
Then $L=\overline{L}\,\cup_\Lambda\, ([0,\infty)\times\Lambda)$, where $\overline{L}$ is a compact manifold with boundary $\partial\overline{L}=\Lambda$. We consider a Riemannian metric on $L$ of the form
\[
g=
\begin{cases}
\overline{g} &\text{ on }\overline{L},\\
dt^2\oplus \phi(t) g_\Lambda &\text{ on } [0,\infty)\times Y,
\end{cases}
\]
where $g_\Lambda$ is a metric on $\Lambda$ and $\phi\colon [0,\infty)\to [1,2]$ is an increasing function. Then closed geodesics of $g$ all lie in the interior of $\overline{L}$ and the distance from $\Lambda\times \{t_0\}$ to $\Lambda\times\{t_1\}$ is bounded below by $|t_1-t_0|$.

Consider a function $f_T\colon L\to \R$ such that $f_T=0$ on $\overline{L}$ and $f_T(t,\lambda)=\beta(t-T)$, where $\beta\colon \R\to[0,1]$ is a cut off function equal to $0$ on $(-\infty,0]$ and equal to $1$ on $[1,\infty)$. Consider again a cotangent neighborhood $T^\ast L$ of $L$. Take
\[
Y=\left\{(q,p)\in T^\ast L\colon \tfrac12 p^2 + f_T(q)=\epsilon^2\right\},
\]
and note that if $H=-pdq(\nabla f_T)$ then $Z=p\cdot\partial_p + X_H$ is a conformally symplectic vector field transverse to $Y$ along $Y$. 

Note that $Y$ is naturally subdivided into two pieces: an interior piece of the form $ST^\ast (\overline{L}\cup_\Lambda ([0,T]\times\Lambda))$ and a cap diffeomorphic to $DT^\ast L|_{\Lambda}$, the disk cotangent bundle of $L$ restricted to $\Lambda$. Reeb orbits and chords are of two kinds: Reeb orbits corresponding to closed geodesics in $\overline{L}$, and Reeb orbits or chords that restrict to Reeb flow lines over geodesics that connect the boundary of $\overline{L}\cup_\Lambda ([0,T]\times\Lambda)$ to itself. The action of the latter type of geodesics are $>\frac12 T$. 

Consider now stretching along $Y$ for curves of bounded area. If we take $T$ sufficiently large we find that the limiting holomorphic buildings must have Reeb asymptotics only at Reeb orbits in $ST^\ast \overline{L}$.

\begin{example}\label{ex : stretch around solid torus}
We will apply this for $L$ of topology $S^{1}\times\R^{2}$. Here we take a metric with a unique geodesic loop that goes once around the generator $S^1\times\{0\}$ of $\pi_{1}(S^{1}\times\R^{2})$.  All other geodesic loops are multiples of this basic loop, and all closed Reeb orbits have Conley-Zehnder index $0$.
\end{example}

\subsection{Relating $T^*S^3$ with the symplectization of $ST^* S^3$}
We generalize Theorem \ref{thm:invariance} to exact symplectic manifolds with negative end the symplectization of the unit cotangent bundle of the three sphere. 

We use notation as in Section \ref{ssec : SFT stretch}. Let $X_\infty^+$ denote an exact symplectic manifold with negative end of the form $(-\infty,0]\times ST^\ast S^3$, where the symplectic form is $d(e^t\alpha)$ for the contact form $\alpha$ induced by the standard round metric on $S^3$. Then Reeb orbits of $\alpha$ are in natural 1-1 correspondence with geodesics and the Conley-Zehender index of a Reeb orbit equals the Morse index of the corresponding geodesic. This means that $\alpha$ has Bott degenerate Reeb flow with Bott manifolds the Grassmannian of oriented 2-planes in $\R^4$, i.e., $S^2\times S^2$, of index $\ge 2$.

Let $L=L_1\cup\dots\cup L_m$ be an asymptotically cylindrical Lagrangian, where $L_j$ topologically is $S^1\times\R^2$, where the negative end of each $L_j$ is empty and where the positive end is a Legendrian $S^1\times S^{1}$.

\begin{lemma}\label{l:curveswnegends} 
Let $X^+_\infty$ and $L$ be as above and assume that $\mathcal{J}$ is a space of almost complex structures that are compatible with the symplectic form and $\R$-invariant in $(-\infty,-T]\times Y$, $T>0$, of the negative end and such that every curve in a basic homology class is somewhere injective. Then Theorem \ref{thm:somewhereinjective} holds for $\mathcal{J}$; skein valued curve counts for $(X_{\infty}^{+},L)$ can be defined exactly as in Definition \ref{eq:invtdef}, and Theorem \ref{thm:invariance} holds.  
\end{lemma}

\begin{proof}
By assumption $J$-holomorphic curves are somewhere injective and Lemma \ref{l:basicsurjectivity} gives transversality for the Cauchy-Riemann operator $\bar\partial_J$ by varying $J$ in $\mathcal{J}$. Then, by SFT compactness, Theorem \ref{thm:SFTlimit}, when there is a negative end, moduli spaces have additional boundary corresponding to holomorphic buildings with levels joined at Reeb orbits. In the present case, a level of such a building in $(X_+^\infty,L)$ is either closed or has negative interior punctures at Reeb orbits. However, since the Reeb orbits have minimal index $2$ the dimension of such a curve is at most $-2$. Since $\mathcal{J}$ gives transversality we find that curves with negative asymptotics do not appear in 0- and 1-parameter families in general position. It follows that the degeneracies in 1-parameter families are exactly as in the case without negative end which implies the result.
\end{proof}

\begin{remark}\label{r : acs good enough}
In Lemma \ref{l:curveswnegends}, if $L=L_1$ of topology $S^\times\R^2$, then Lemma \ref{l : somwhere injectivity exact} shows that space of all almost complex structures cylindrical in $(-\infty,0]\times ST^\ast S^3$ ensures the required somewhere injectivity. Also, if $X^+_\infty=\R\times ST^\ast S^3$ and $L=L_1\cup\dots\cup L_m$ then almost complex structures for which the assumptions of Lemma \ref{l : somewhere injective non-exact} hold and which are cylindrical in $(-\infty,0]\times ST^\ast S^3$ ensures the required somewhere injectivity.
\end{remark}

Let $L=L_1\cup\dots\cup L_m\subset T^\ast S^3$ be an asymptotically cylindrical Lagrangian, where $L_j$ topologically is $S^1\times\R^2$ such that $L\cap S^3=\emptyset$. 
Let  $\beta\in H_2(T^\ast S^3, L\cup S^3)$ be a basic homology class, see Definition \ref{def : basic class}. The inclusion $T^\ast S^3\setminus S^3\to T^\ast S^3$ induces a map $H_2(T^\ast S^3\setminus S^3,L)\to H_2(T^\ast S^3, L\cup S^3)$ with kernel $H_2(T^\ast S^3\setminus S^3)\approx Z$. Note that any class in $H_2(T^\ast S^3\setminus S^3,L)$ that maps to $\beta$ is a basic class for $L\subset T^\ast S^3\setminus S^3$. Picking a $4$-chain for $S^3\subset T^\ast S^3$ gives a splitting $H_2(ST^\ast S^3\setminus S^3,L)=H_2(ST^\ast S^3,L)\oplus \Z\cdot [S^2]$, $[S^2]\in H_2(ST^\ast S^3\setminus S^3)$ is the class of the fiber sphere. We will think of $T^\ast S^3\setminus S^3$ symplectically as the symplectization $\R\times ST^\ast S^3$. 

If $L$ has one component, let $\mathcal{J}_0$ be the space of complex structures on $T^\ast S^3$ and if $L$ has more thean one component then let $\mathcal{J}_0$ is the space of almost complex structures such that the assumptions of Lemma \ref{l : somewhere injective non-exact} holds for $L$. Then if we apply SFT stretching in around the boundary of a neighborhood $N(r)$ of $S^3$ we get almost complex structures in $\R\times ST^\ast S^3$ that satifies the somewhere injectivity hypothesis of Lemma \ref{l:curveswnegends}. We use such almost complex structures for the curve counts in the following result. 

\begin{theorem} \label{thm:deleteS3}  
Let $L$ be a Lagrangian brane (see Definition \ref{brane}) in $T^* S^3 \setminus S^3$ and let
$\beta \in H_2(T^*S^3, L\cup S^3)$ be a basic homology class as above. As explained above, any class in $H_2(\R\times ST^\ast S^3,L)$ that maps to $\beta$ is also basic. We have the following:   
\begin{equation}
	Z_{T^*S^3, S^3 \cup L,\beta}(a, a_L, z) =  \sum_{k} Z_{\R\times ST^\ast S^3,  L, \beta+k[S^2]}(a_L, z)Q^k\big|_{Q = a^2}\otimes[\varnothing_{S^{3}}]
	\in \Sk(S^3)
	\end{equation}
\end{theorem}

\begin{proof}
	It suffices to establish this equality order by order in $z$ and thus we may fix area and Euler characteristic bounds. Write $N(r)$ for a radius $r$-neighborhood of $S^{3}\subset T^{\ast}S^{3}$. 
	
	Consider the curves contributing to $Z_{T^*S^3, S^3 \cup L}$. Let $\epsilon>0$ be sufficiently small so that $L$ lies outside $N(10\epsilon)$. We apply SFT-stretching near $\partial N(5\epsilon)$. By Lemma \ref{l:curveswnegends}  for sufficiently large stretching parameter $\rho$ we find an almost complex structure $J_{\rho}$ such that all curves with boundary on $L$ lies outside $N(5\epsilon)$. Furthermore, all closed $J_{\rho}$-holomorphic curves have zero symplectic area and are therefore constant.  
	
	That is, we may deform the complex structure for $T^*S^3$ so that all curves leave a neighborhood of the $S^3$. 
	Now the curves contributing to the invariant are literally the same for $(T^*S^3, S^3 \cup L)$ and $(\R\times ST^* S^3, L)$ (see Lemma \ref{l:curveswnegends} for the latter count); it remains only to examine how we count them.  In the former case we must account
	for the intersection with the 4-chain; in the latter case, for the class in $H_2(T^* S^3 \setminus S^3)$.  
	These match under the substitution $Q = a^2$, since 
	generating homology class in $H_2(T^* S^3 \setminus S^3)$ is dual to half of the 4-chain, 
	viewed as an element of $H_4^{BM}(T^* S^3 \setminus S^3)$.   
\end{proof}

\begin{figure}
	\centering
	\includegraphics[width=.5\linewidth]{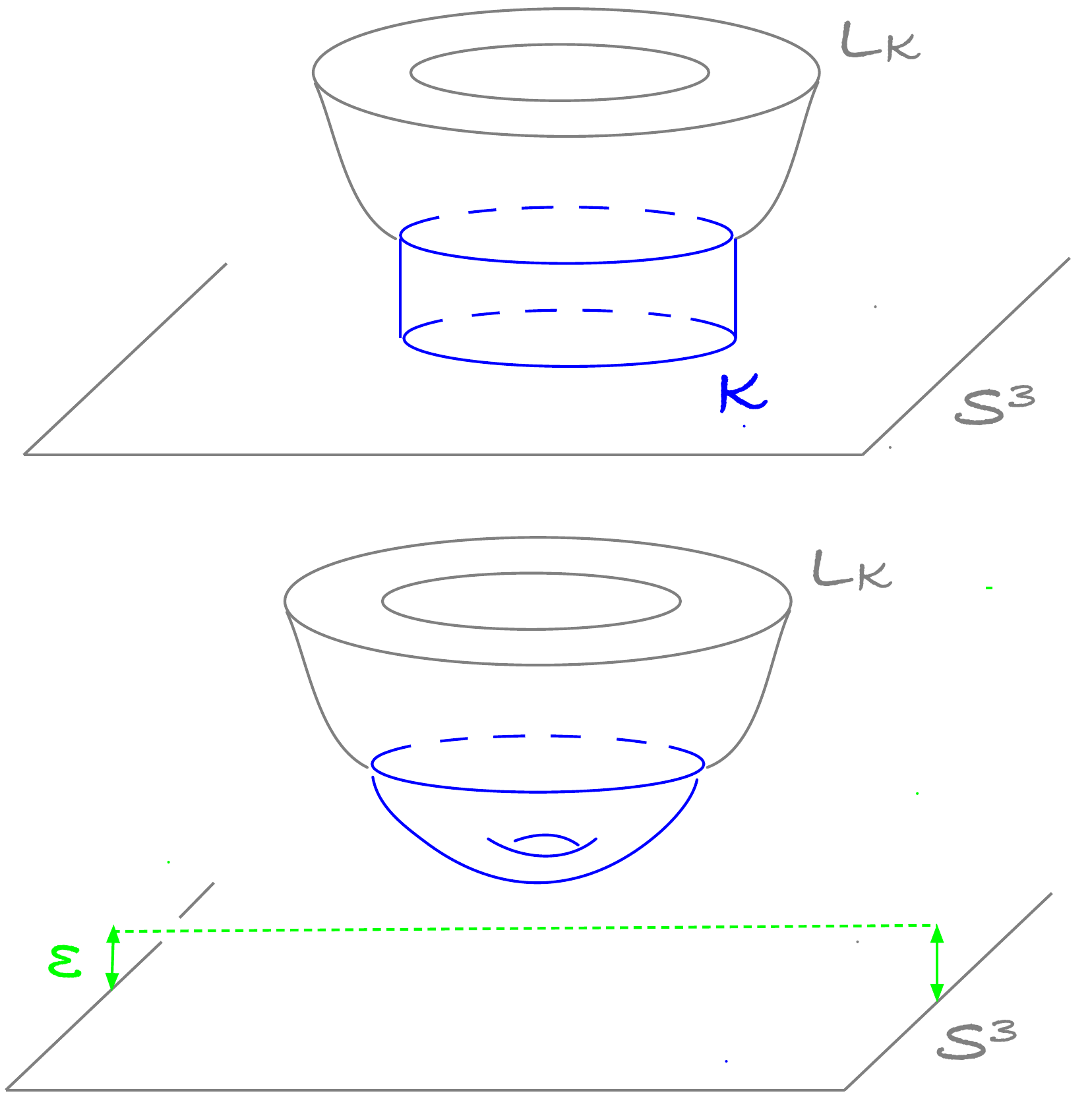}
	\caption{Top picture shows the unique annulus with boundary on $L_{K}$ and along $K\subset S^{3}$, before SFT-stretching. As we SFT-stretch, the boundary on $S^{3}$ undergoes skein moves until, as illustrated in the bottom picture, all curves left an $\epsilon$-neighborhood of $S^{3}$.}
\end{figure}

\subsection{Relating the symplectization of $ST^\ast S^3$ with the resolved conifold}
Given a Lagrangian conormal brane $L_K\subset T^* S^3 \setminus S^3\approx\R\times ST^\ast S^3$, we may consider it as a brane also in the resolved conifold $X$ as follows. The resolved conifold is asymptotic to the symplectization of $ST^\ast S^3$ so that, outside some neighborhood of the central $\C\P^1$, $X$ is symplectimorphic to $[0,\infty)\times ST^\ast S^3$ with symplectic form $d (e^t\alpha)+\omega_0$ for the standard contact form on $S^3$, where $\omega_0$ is bounded with size corresponding to the area of $\C\P^1$. Then, if the area of $\C\P^1$ is sufficiently small then as in Lemma \ref{l : omega0 small}, $L_K\subset [T,\infty)\times ST^\ast S^3$ admits an isotopy $L_K^s$ inside a neighborhood of the $0$-section in $T^\ast L_K$ such that $L_K=L_K^0$ and that $L_K^s$ is Lagrangian with respect to $d(e^t\alpha)+(1-s)\omega_0$. When we consider $L_K$ as a Lagrangian in $X$ we mean $L_K^1$. 

Note that the 4-chain of $L_K\subset \R\times ST^\ast S^3$ gives a $4$-chain for $L_K\subset X$ which is disjoint from a neighborhood $N(\C \P^1)$ of the central $\C \P^1$ in $X$.  

We next consider curve counts for $L\subset X$ which has the properties of $L_K$ above. Were we to attempt to count all possibly disconnected holomorphic curves in basic homology classes for $L \subset X$, there would be no way to exclude disconnected components giving multiple covers of the central $\C\P^1$ and the general bare curve count requires adequate perturbations as developed in \cite{bare}, see Section \ref{sec: beyond}. Here we bypass these difficulties by just restricting to counting curves of which each component has boundary; we denote the resulting count $Z'$ and show next that it is well defined and invariant.

Let $L=L_1\cup\dots\cup L_m\subset X$ and each $L_j$ is topologically $S^1\times\R^2$. Assume that $L\subset [0,\infty)\times ST^\ast S^3$ and that there exists a neighborhood $N(\C\P^1)$ disjoint from the $4$-chain of $L$. Let $\mathcal{J}'$ denote the space of almost complex structures on $X$ that meets the conditions of Lemma \ref{l : somewhere injective non-exact} and which furthermore are standard in some neighborhood of $\C\P^1$. 

\begin{lemma}\label{reducedopen}
Let $(X,L)$ and $\Jj'$ be as above. For $\C\P^1$ of sufficently small area, $\mathcal{J}'$ is non-empty and if $\mathcal{M}'(J)$ is the moduli space of curves in a basic homology class of $(X,L)$, see \eqref{eq : basic non-exact}, for which every connected component has nonempty boundary, then Theorem \ref{thm:somewhereinjective} holds with $\mathcal{J}$ replaced by $\mathcal{J}'$, and $\mathcal{M}$ replaced by  $\mathcal{M}'$, and Theorem \ref{thm:invariance} holds for the corresponding
\begin{equation}\label{eq:fakereduced}
Z_{X,L;J_t}' = 1 + 
\sum_{(u,\Sigma)\in\M'(X,L)} 
w(u) \cdot
z^{-\chi(u)} \cdot  Q^{u_*[\Sigma]} \cdot  a^{u \OpenHopf L} \cdot 
\langle\partial u\rangle \ \in \  \Sk(L)[[Q]],
\end{equation}
which means that $Z_{X,L;J_t}'$ is independent of $t\in[0,1]$.
\end{lemma}

\begin{proof}
To see that $\mathcal{J}'$ is non-empty, note that if the area of $\C\P^1$ is sufficiently small then then an $\R$-invariant almost complex structure in $[0,\infty)\times ST^\ast S^3$ compatible with $d(e^t\alpha)$ is also compatible with $\omega=d(e^t\alpha)+\omega_0$. It is also straightforward to check that one can interpolate between the standard almost complex structure near $\C\P^1$ and an $\R$-invariant complex structure in $[0,\infty)\times ST^\ast ST^\ast S^3$ over a region $[-T,0]\times ST^\ast S^3$. This shows that $\mathcal{J}'$ is non-empty.

We next show that any closed curve is contained in a small neighborhood of $\C\P^1$. Note that in the region $[0,\infty)\times ST^\ast S^3$ the complex structure is $\R$-invariant. Here the $t$-coordinate is pluri-subharmonic and cannot have a local maximum. It follows that closed curves cannot enter this region and we get the desired neighborhood $N(\C\P^1)$. 

Lemma \ref{l : somewhere injective non-exact} and Lemma \ref{l:basicsurjectivity} then shows that $\mathcal{J}'$ gives transversality for curves in basic homology classes for which all components have non-empty boundary. The result then follows by a word by word repetition of the proof of Theorem \ref{thm:invariance}, replacing $Z_{X,L,d,J_t}$ there by $Z_{X,L,J_t}'$. 
\end{proof}

\begin{theorem} \label{thm:conifold} 
For any basic homology class $\beta$:
\begin{equation}\label{eq:conifoldcurves}
	Z_{\R\times ST^*S^3,  L,\beta}(Q, a_L, z) = Z'_{X,L,\beta}(Q, a_L, z)
	\end{equation}
\end{theorem}

\begin{proof}
We use notation as above. Take a complex structures on $X$ that is compatible with $d(e^t\alpha)$ in $[0,\infty)\times ST^\ast S^3$ and use an almost complex structure on $\R\times ST^\ast S^3$ that agree on the correspnding region. Apply SFT-stretching in both spaces in $[0,1]\times ST^\ast S^3$.
If the limit has some level with negative end then the outer piece is asymptotic to a collection of Reeb orbits and hence has dimenson $\le -2$. It follows that for suffiently large stretching all curves in both spaces lie in $[0,\infty)\times ST^\ast S^3$. These curves give all the contributions to both $Z_{\R\times ST^*S^3, L,\beta}(Q, a_L, z)$ and $Z'_{X,L,\beta}(Q, a_L, z)$.   
\end{proof}

\section{Enumerative meaning of the HOMFLYPT polynomial} \label{sec:proofthmbasic}

Let $K=K_{1}\cup\dots\cup K_{m} \subset S^3$ be a link, and let $L_{K}$ its conormal Lagrangian, shifted off the $0$-section. When $m>1$ we assume that the shift is sufficiently small so that Lemma \ref{l : nested nbhds for links} holds.

\subsection{A convenient choice of 4-chain}\label{ssec : nice 4 chain}
Since the conormal is disjoint from a cotangent fiber, it is null-homologous in $T^{\ast}S^{3}\setminus S^{3}$, hence bounds a 4-chain there. Changing the 4-chain affects the invariants by monomial factors and monomial changes of variable.

We will now make a particular choice of 4-chain for $S^3$ in order to avoid framing ambiguities in the statements of theorems.

Fix a tubular neighborhood $N\approx S^1\times D^2$. Fix a metric on $S^3$ that is the flat product metric in $N_j$ and consider the standard complex structure in a neighborhood of the 0-section in $T^\ast N$. In these coordinates the shifted $L_K$ is simply $S^1\times \{\epsilon\}\times\R^2$. 

Let $u$ denote the basic annuli stretching between $L_K$ and $S^3$ which in local coordinates near $K$ have connected components $S^1\times[0,\epsilon]\times\{0\}$. Fix $\xi$ a  non-zero vector field on $S^{3}$ which is nowhere tangent to the link $K$. Consider the $4$-chain $C_{\xi}$ which is the union of the positive and negative half rays determined by $\xi$ and oriented in such a way that $\partial C_{\xi}=2[S^{3}]$. Since $\xi$ and the tangent vector $\tau$ to $K$ are everywhere linearly independent $C_{\xi}$ is disjoint from the interior of the basic annulus $u$ stretching between $L_K$ and $S^3$ and thus 
\begin{equation}\label{eq: I = 0}
I(u,C_\xi)=0,
\end{equation} 
where $I$ is as in Lemma \ref{l : intersections without shifting}. 

In order to calculate $u\OpenHopf S^3$ it remains to calculate $\partial u\OpenHopf \gamma$. The interior of $C_\xi$ is disjoint from $S^3$. Consider the intersection $\gamma=L_{K}\cap C_\xi$. We first look at the intersection 
$\gamma'=L_{K}'\cap C\subset T^{\ast} S^{3}$ where $L_{K}'$ denotes the unshifted conormal. It is easy to see that $\gamma'=K\cup_{p} \xi(p)^{\ast}_{\pm}$, where $p$ denotes the points where the tangent to $K$ is perpendicular to $\xi$ and $\xi^{\ast}_{\pm}$ denotes the positive and negative half-lines in the fiber of the conormal of $K$ at the point $p$ in direction of the co-vector dual to $\xi$. For the shifted conormal the intersection $\gamma$ is a smoothing of the curve $\gamma'$ that is disjoint from the central copy of $K$. We see in particular that $\gamma$ has (ideal) boundary and constructing a bounding chain correspond to picking paths at infinity closing $\gamma$ up. Changing the closing path by a suitably oriented meridian changes $\partial u\OpenHopf \gamma$ by $\pm 1$. It follows that there is a choice such that
\begin{equation}\label{eq: lk gamma = 0}
\partial u\OpenHopf \gamma = 0.
\end{equation}
Lemma \ref{l : intersections without shifting}, \eqref{eq: I = 0}, and \eqref{eq: lk gamma = 0} implies that for $C_\xi$ there is a choice of bounding chain for $\gamma$ such that
\begin{equation}\label{eq : u cdot C = 0}
u\OpenHopf S^3 = 0,
\end{equation}
where $u$ is the basic annulus stretching between $L_K$ and $S^3$.

\subsection{The count in $S^3$}

We compute $Z_{T^*S^3, L_K \cup S^3}$. This invariant takes values 
in a tensor product of skein modules of $S^3$ and solid tori $S^{1}\times\R^{2}$.  Recall the skein of $S^3$ is just the free rank one module over the coefficient ring, generated by the class of the empty knot $\varnothing$.  This assertion is essentially equivalent to the existence of the HOMFLYPT polynomial $\langle K \rangle_{S^3} \in \Z[a^\pm, z^\pm]$ of links $K$: 
$$
K = \langle K \rangle_{S^3} \, \cdot \, \varnothing \in \Sk(S^3).
$$

For the solid torus $S^{1}\times\R^{2}$, we recall from \cite{Turaev} that $\Sk(S^{1}\times\R^{2})$ is a free polynomial algebra over $\Z[a^\pm, z^\pm]$ with generators indexed by the integers.  
$$
\Sk(S^{1}\times\R^{2}) \cong \Z[a^\pm, z^\pm][\ldots, \ell_{-2}, \ell_{-1}, \ell_{0}, \ell_{1}, \ell_2, \ldots].
$$ 
The algebra can be viewed as a $\Z$-graded, with $\deg(\ell_i) = i$; the grading records 
the class in $H_1(S^{1}\times\R^{2})$ determined by the skein representative.  We write $\Sk^+(S^{1}\times\R^{2})$ and $\Sk^-(S^{1}\times\R^{2})$ for the
subalgebras on the positive and negative generators; $\Sk(S^{1}\times\R^{2}) =   \Sk^+(S^{1}\times\R^{2}) \otimes \Sk^-(S^{1}\times\R^{2})$. Then $\Sk^+(S^{1}\times\R^{2})$ is a free $\Z[a^\pm, z^\pm]$ module on a basis naturally indexed by partitions.  
In fact there is a natural identification of $\Sk^+(S^{1}\times\R^{2})$ with the ring of symmetric functions, 
but we will not need it here. 

In particular, if we fix a basis for the skein of the solid torus, e.g.~monomials in the $\ell_i$,
then we may extract the corresponding coefficients of $Z_{T^*S^3, S^3 \cup L_K}$; said coefficient
will take values in $\Sk(S^3)$. Here we will focus just on the simplest term. We fix the generator $\ell=\ell_{1}$ represented by $S^{1}\times\{0\}$ and write  $1^k \in \Z^k = H_1(L_K) =
H_2(T^*S^3, S^3 \cup L_K)$ for the homology class which is the sum of these generators for all components.

Fix 4-chain $C_\xi$ and bounding chain for $\gamma$ as in Section \ref{ssec : nice 4 chain}. 

Note that $H_2(T^\ast S^3,L_K)=H_1(L_K)$ and that the sum of positive generators of $H_1(L)$, written multiplicatively as $1^n$, is a basic homology class. 

\begin{theorem} \label{thm:HOMFLY}
We have the following equality in 
$\Sk(S^3 \cup L_K)$: 
\begin{equation} \label{stupid homfly}
Z_{T^\ast S^3, S^3 \cup L_K, 1^m}
= K \otimes \ell^{\otimes m}\end{equation} 
\end{theorem}

\begin{proof} 
Let $N=\bigcup_j N_j$ be a tubular neighborhood of $K$, where each $N_j$ is diffeomorphic to $S^1\times D^2$. Identify $T^\ast N_j$ with $S^1\times D^2\times \R^3$, where $S^3$ corresponds to $S^1\times D^2$ and $L_{K_j}$ to $S^1\times \{\delta\}\times\R^2$, where $\delta$ is the shift of $L_{K_j}$. Consider $S^1\times D^2\times \R^3$ as a subset of $\C^\ast\times \C^2$ and equip it with the standard integrable complex structure $J_0$. Let $J$ be a complex structure on $T^\ast S^3$ that agrees with $J_0$ in a neighborhood of the 0-section in $T^\ast N_j$ for each $j$. 

There is an obvious holomorphic cylinder stretching between $L_{K_j}$ and $S^3$, given in local coordinates by $S^{1}\times[-\epsilon,0]\times\{0\}$. We claim that for $\epsilon>0$ sufficiently small, the union $A=A_1\cup\dots\cup A_m$ of such cylinders are the unique holomorphic curves in the basic homology class $1^m$. 

By monotonicity, it is clear that any curve in a basic class must line inside $T^\ast N$ for all suffciently small $\epsilon>0$. Inside $N$ the complex structure is split. Projecting to the $T^\ast D^2\subset \C^2$-factor we get a holomorphic curve with boundary on $D^2\cup i\R^2$ which must be constant by the maximum principle. It follows that $A$ is the only curve in the class $1^m$. 

We next check  that $A$ is transversely cut out. The linearization of the $\bar{\partial}_{J_0}$-operator at $A_j$ is simply the standard $\bar{\partial}$-operator with boundary condition in $\R\times\R^{2}$ along one boundary component and $\R\times i\R^{2}$ along the other, there is a constant solution in the $\R$-direction (the linearization of rotations along $A_j$) and no solutions in the other directions. It follows that the curve is transversely cut out.           
We conclude that the coefficient of $\ell \otimes \ell \otimes \cdots \otimes \ell$ is just the count of $A=A_1\cup\dots\cup A_m$. The cylinders $A_j$ have Euler characteristic zero and, by our choice of $C_\xi$ and bounding chain for $\gamma$, $u\OpenHopf C_\xi=0$, see \eqref{eq : u cdot C = 0}. Hence the count is just $1^m$ times the boundary in $S^3$, which is the original link $K$ itself, $\langle K\rangle\in\Sk(S^{3})$. 
\end{proof}

\subsection{Proof of Theorem \ref{thm:basic}}
Applying Theorem \ref{thm:deleteS3} to Theorem \ref{thm:HOMFLY} gives 
the following equality in $\Sk(L_K)$: 
\begin{equation} \label{delete S3 equation} 
\sum_{d} 
Z_{\R\times ST^\ast S^3, L_K, 1^d\oplus d}
=  \langle K \rangle_{S^3}  \cdot \ell^{\otimes n}.
\end{equation}
Equation \eqref{eq:basic} then follows by applying Theorem \ref{thm:conifold}.

We turn to the assertion that there are almost complex structures for which the boundaries of all curves contributing to $Z_{X,L_K, 1^d}$ have boundaries framed isotopic to the link of central curves $\ell^m$ in $L_K$. We produce such almost complex structures by SFT-stretching around $L_K$. The details are as follows. 

Consider curves in a fixed fixed homology class. The area is then bounded and as discussed in Section \ref{ssec:SFTlimitnoncompactL}. Consider a metrics on each $L_{K_j}$ as in Example \ref{ex : stretch around solid torus}. By invariance of the curve count we can deform the almost complex structure in a small neighborhood of $L_K$ to the almost complex structure determined by the metric. Then as explained in Section \ref{ssec:SFTlimitnoncompactL}, SFT-stretching inside this small neighborhood around $L_K$, the boundary of any curve must have negative asymptotic at a Reeb orbit in $ST^\ast L_{K_j}$ for the contact form on $ST^\ast L_K$ compatible with the metric and such Reeb orbits are all multiples of the lift of the basic geodesic, i.e., the central curve $\ell$ in the solid torus. 
Further, the $a$-degree of the curve is given by counting intersections with a fiber that does not meet $L_K$. Since the area of the curve $u$ is 
$$
\deg(u)\cdot\mathrm{area}({\C\P^1})+\int_{\partial u}\tau,
$$
where $\tau=\sum_j d\theta_j$ for $d\theta_j$ on $L_{K_j}\approx S^1\times\R^2$ the standard form along the $S^1$-factor, we find for generic shift, that by assumption on primitive homology class, the only possibility is a single Reeb orbit in each conormal component. 

Consider now the lower level in the SFT-limit after stretching, i.e., the moduli space of curves in $T^\ast L_{K_j}$ with positive asymptotic a single Reeb orbit. For the almost complex structure corresponding to the metric on $K_j$, a straightforward area argument shows that there is only one curve with boundary on the unique non-degenerate geodesic and that it is transversely cut out. Thus, for sufficiently large stretching the boundary of any curve in a basic homology class is arbitrarily $C^1$-close to that of the standard cylinder, i.e., to $\ell$. The theorem follows. 
\qed

\begin{remark}
The curve counts on the left hand sides of Equations \ref{stupid homfly} and \ref{delete S3 equation} can be reinterpreted as counts of curves without boundaries as follows. Consider SFT-stretching around $L_{K}$ as in the last argument of the proof above. Then for sufficiently large stretching curves which have boundary on the zero section are in 1-1 correspondence with curves asymptotic to Reeb orbit $\gamma$ that corresponds to the geodesic which is a positive homology generator (by adding the basic cylinders over this geodesic). Since this SFT-stretching is a modification of the almost complex structure near $L_{K}$, the counts on the right hand side of \eqref{stupid homfly} and \eqref{delete S3 equation} can be identified with the count of punctured curves in $T^{\ast}S^{3}\setminus (S^{3}\cup L_{K})$. Here $\ell^{\otimes n}$ should be substituted by $\gamma^{\otimes n}$, i.e., the curves counted have negative punctures where it is asymptotic to the simple positive Reeb orbits in the components of $L_{K}$.  

However, the derivation of the (rather nontrivial) \eqref{delete S3 equation} from the (trivial seeming) \eqref{stupid homfly} used crucially the invariance of the skein-valued curve counting. 
\end{remark}

\section{Bare curves beyond basic classes} \label{sec: beyond}
In this article we have restricted  to basic homology classes, where somewhere-injectivity holds for essentially topological reasons, and so transversality for bare holomorphic curves can be achieved by changing the almost complex structure. In more general cases, transversality is more involved, but perturbations have been constructed in \cite{bare} to show that the main result holds.  In this section we give a brief description  from this more general point of view.

\subsection{General skein valued holomorphic curve counts}
In \cite{bare} we establish the analogue of Theorem \ref{thm:somewhereinjective}, where bare $J$-holomorphic curve for $J\in \mathcal{J}$ should are replaced by bare holomorphic curves $\bar\partial_J(u,S)=\lambda$ for $\lambda$ with properties as in \cite[Theorem 1.1]{bare}; the moduli spaces are constructed using the `polyfold' formalism \cite{HWZ, HWZ-GW}, and  are weighted branched manifolds (so give invariants in $\mathbb{Q}$ rather than $\mathbb{Z}$), but otherwise function as desired. 
We may then repeate verbatim the proof of
Theorem \ref{thm:invariance}, and deduce (for all $d$) the invariance of
\begin{equation}\label{eq:invpartition}
Z_{X, L, \lambda} := 1 + \sum_{d > 0} 
Z_{X,L,d;\lambda}
\cdot Q^d  \ \in \ \Sk(L) \otimes \Q[[H_2(X, L)]]
\end{equation}

As in Theorem \ref{thm:HOMFLY} we find also in the perturbed case that for small shift all holomorphic curves stretching between $L_K$ and $S^3$ in $T^\ast S^3$ map to the basic annuli. In the more general setup these curves can also be branched multiple covers of the basic annulus and it is more difficult to read off the contribution of such multi-covered annuli in the skein. We found in \cite{ekholm-shende-unknot}, using curves with Reeb chord asymptotics at infinity and recursion arguments, the contributions of such cylinders that we then used in \cite{ekholm-shende-colored} to establish the general version of the Ooguri-Vafa conjecture (general version of Theorem \ref{thm:basic}).

\begin{remark}
A key point of skein valued curve counting is the restriction to bare curves, i.e., curves without area zero components. The more conventional approach to counting holomorphic curves is to perturb the Cauchy-Riemann equation to $\partial u = \epsilon$ also on curves with stable domain but zero symplectic area, 
thus ensuring that no curve will literally map to a point, and then count the resulting solutions.  

Let us explain why such an approach would not directly
yield the skein relations as above.  Consider a 1-parameter family of maps $u_t\colon S_t \to X$ defined over $[0, \epsilon)$, which say at $t > 0$ are embeddings of smooth curves, while $S_0$ is a nodal curve with components some $S_+$ and an annulus $A$, glued at a boundary point, with $A$ having zero symplectic area.
In a setup where we have perturbed the constant curves, we may expect to find such degenerations.  

In order to see the skein relation as above, we would want to know that there was another family where the central fiber is a map from the normalization $S_+ \sqcup A$.  However, $A$ is symplectic area zero and unstable, and maps are not considered in the standard approaches to Gromov-Witten theory.  

Said differently, in order to see the skein relations from a more standard approach to Gromov-Witten theory, it appears that one must confront, or in an organized way avoid, the contributions of unstable maps. 
\end{remark}

\subsection{Closed and open invariants}
In Theorem \ref{thm:basic} we restrict the size of the central $\C\P^1$ in the resloved conifold in order to separate closed and open curves. In general no such separation is possible and with the perturbations in \cite{bare} it is not necessary. We give a brief discussion.

Consider first the empty Lagrangian and assume that we have a perturbation scheme for bare curves. Then $Z_X$ is the count of bare closed disconnected curves. (We show in \cite{bare} that the (exponentiated) usual Gromov-Witten invariants $Z_{\mathrm{GW}}$ can be expressed as $Z_{\mathrm{GW}}(g_s)=Z_{X}|_{z=e^{g_s/2}-e^{-g_s/2}}$.)

For non-empty Lagrangian $L$, one can then consider the a reduced invariant 
$$
Z'_{X,L}:=Z_{X, L} / Z_X;
$$
it is independent of choices because both numerator and 
denominator are.  However, $Z'_{X,L}$ does not generally have 
a direct and invariant enumerative interpretation: it may happen that
closed curves have non-vanishing intersection number
with the 4-chain for $L$, and hence contribute differently to the numerator and denominator. If closed curves do not intersect the 4-chain of $L$, as in our setup for the resolved conifold, then the interpretation of the reduced invariant as curves with components having non-empty boundary on $L$, does make sense.

\begin{remark}
In the absence of some geometric condition on closed curves as in Lemma \ref{reducedopen},
contributions of closed curves which intersect the 4-chain cannot be invariantly separated from
those of open curves with contractible 
boundary.  For example, suppose that we have found generic perturbation data such that there are no curves with boundary on $L$ and that no closed curve intersects the 4-chain. Then $Z'_{X,L}=Z_{X, L} / Z_X$, which counts only actual curves with boundary of which there none, satisfies $Z_{X, L}' = 1$. Now deform the data, e.g.~the 4-chain, in such a way that there is an instance when the 4-chain becomes tangent to one of the closed curves. After this instance that closed curve intersects the 4-chain in two points with opposite intersection signs. After further deformation one of these intersection points moves to the boundary $L$ of the 4-chain and at an elliptic crossing a new holomorphic curve with boundary on $L$ is born.  After this moment there is a unique nondegenerate curve with boundary on $L$.  Nevertheless, its contribution
is canceled by that of closed curves, and it remains
the case that $Z_{X,L}'=Z_{X, L} / Z_X = 1$. 
\end{remark}

\appendix

\section{Properties of holmorphic curves in basic classes} \label{somewhere injective review} 
In this section we review properties of holomorphic curves.

\subsection{Finite area curves in manifolds with cylindrical ends}\label{ssec : finite area curves}
In this section we show that finite area holomorphic curves in symplectic manifolds with cylindrical ends are confined to compact subsets. Let $X$ be a symplectic manifold and $L\subset X$ a Lagrangian. Assume that $(X,L)$ is asymptotically cylindrical. That is, outside a compact $(X,L)$ is diffeomorphic to $([0,\infty)\times Y,[0,\infty)\times\Lambda)$, where $Y$ is a contact manifold with contact form $\alpha$ and $\Lambda\subset Y$ a Legendrian submanifold, and the symplectic form $\omega$ on $X$ satisfies $\omega=d(e^t\alpha)+\omega_0$, for $t<T$, where $t$ is a coordinate on $[0,\infty)$, $\omega_0$ is bounded, and $L$ is the graph  over $\Lambda\times[0,\infty)$ of a bounded closed $1$-form. 

We note that if $L_K$ is the shifted conormal of a link $X=T^\ast S^3$ and $L=L_K\cup S^3$ then $(X,L)$ is asymptotically cylindrical, and that the same holds for the resolved conifold, $X=\mathcal{O}(-1)\oplus\mathcal{O}(-1)\to\C\P^1$ and $L=L_K$.

\begin{lemma}\label{l : curves not at infinity}
Let $(X,L)$ be asympotically cylindrical and let $J$ be an almost complex structure unifomly compatible with $\omega$, i.e., $\omega(v,Jv)\ge C\|v\|$ for a product Riemannian metric at infinity. Let $\gamma$ be any fixed homology class in $H_2(X,L)$. Then there exists $T>0$ such that no bare $J$-holomorphic curve in homology class $\gamma$ intersects $[T,\infty)\times Y$.
\end{lemma}

\begin{proof}
By monotonicity, there is a constant $K>0$ such that if a non-constant holomorphic curve intersects $\{t\ge T\}$ then it has area $>K T$. For sufficiently large $T$, $KT>\int_\gamma\omega$. The lemma follows.
\end{proof}

\subsection{General Fredholm properties and transversality for somewhere injective curves}\label{ssec : gen Fredholm injective}
In this section we recall basic properties of Fredholm maps in general position and that for somewhere injective holomorphic curves, general position can be achieved by varying the almost complex structure.
Let $X$ be an asymptotically cylindrical symplectic manifold and $L\subset X$ an asymptotically cylindrical Lagrangian in $X$. Let $\mathcal{J}$ denote the space of almost complex structures compatible with $\omega$. 

Let $\mathcal{V}$ be a configuration space of maps $u\colon(S,\partial S)\to (X,L)$. We will model this on Sobolev spaces with three derivatives in $L^{2}$ below so that derivatives are continuous. More concretely this means that $\mathcal{V}$ is a bundle of Sobolev spaces where we use weights that limit to exponential weights in cylinders and strips near the boundary of the space of curves. When studying evaluations we will sometimes consider $\mathcal{V}$ as a bundle over the product of the space of curves and some jet-bundle over the target space. We consider non-linear Fredholm problems $F\colon \mathcal{V}\times\mathcal{J} \to \mathcal{W}$, where $\mathcal{W}$ is a bundle of Sobolev spaces of complex anti-linear bundle maps $TS\to TX$. We will sometimes also multiply source and target by auxiliary finite dimensional spaces, $F\colon M\times\mathcal{V}\times{J}\to \mathcal{W}\times K$. The Fredholm problems have the form
\[ 
F(u,J,m) = (\bar\partial_{J}(u,S), k(u,S,m)),
\]
For fixed $J\in \mathcal{J}$ the linearization $d_{(u,S,m)}F$ of $F$ is a Fredholm operator of index
\[ 
\ind(d_{(u,S,m)}F) = \ind(\bar\partial_{J}) + \dim(M) -\dim(K).
\] 
This equality also holds at the level of index bundles in the sense that the index bundle of $F$ is given by adding the bundle difference $TM\ominus TK$ to the index bundle of $\bar\partial_{J}$. In particular, orientations on the index bundle of $\bar\partial_{J}$, on $M$, and on $K$ together induce an orientation on the index bundle of $F$.   

Write $dF$ for the full linearization of $F$, $dF\cdot\delta(u,S,m,J) =d_{(u,S,m)}F\cdot\delta(u,S,m)+d_{J}F\cdot\delta J$.

In this notation, that $\mathcal{J}$ gives transversality for the Cauchy-Riemann operator $\bar\partial_J$ can be stated as $(\delta u,\delta J)\mapsto d_{(u,S)}F\cdot \delta(u,S)+d_{J}F\cdot\delta J\in T\mathcal{W}$ is surjective. We recall standard transversality results for somewhere injective holomorphic curves. 

\begin{lemma}\label{l:basicsurjectivity}
Let $\gamma\in H_2(X,L)$ be a homology class such that every $J$-holomorphic curve in class $\gamma$ is somewhere injective then $\mathcal{J}$ gives transversality for the Cauchy-Riemann operator for curves in class $\gamma$.   

More generally, for the corresponding map $F\colon M\times\mathcal{V}\times \mathcal{J}\to\mathcal{W}\times K$,
if $\bar\partial_J(u,J)=0$ and $dk_{(u,S,m)}$ is surjective then $dF$ is surjective at $(u,S,m,J)$.  
\end{lemma}

\begin{proof}
 Transversality for Fredholm problems at somewhere injective curves is well-known. We recall the argument. If $u$ is a $J$-holomorphic curve then the linearization $L_{(u,S)}\bar\partial_{J}$, see \eqref{eq : linearization C-R operator}, of the $\bar\partial_{J}$-operator at $u$ is an elliptic operator. We consider the full linearization $L_{(u,S)}\bar\partial_{J}+d_{J}F$, where the second term corresponds to variations of $J\in\mathcal{J}$. If the full linearization is not surjective then by a partial integration argument a co-kernel element gives a solution $v$ of the dual elliptic equation which is orthogonal to all variations $d_{J}F\cdot\delta J$. By somewhere injectivity this means $v$ vanishes on the open set $V$ where $u$ is injective, and by unique continuation for the dual operator this implies $v$ vanishes identically. It follows that the full linearization is surjective.

 The second statement is an immediate consequence.
\end{proof}

Together with a general position argument,
Lemma \ref{l:basicsurjectivity} implies that if $\Delta$ is a $d$-dimensional manifold and $b\colon \Delta\to \mathcal{J}$ is a smooth map then after arbitrarily small homotopy, 
\begin{equation}\label{eq:fiberedacs}
\M(\Delta,k_{0})=\{(u,S,\delta)\colon u\in\M(b(\delta)),\; k(u,S)=k_{0}\}
\end{equation} 
is a transversely cut out manifold of dimension $\ind(d_{(u,S)}F)$. 

\begin{remark}
The Fredholm theory described above applies directly to maps $u\colon(S,\partial S)\to(X,L)$ with stable domains. Stable maps with unstable domains (disks and annuli) must first be stabilized. We use somewhere injectivity 
see Lemmas \ref{l : somwhere injectivity exact} and \ref{l : somwhere injectivity exact} to stabilize domains by adding sufficiently many marked points in local hypersurfaces in regions where the maps are injective gives stable domains and we use Sobolev spaces of maps with two derivatives in $L^{2}$ on these domains to convert the moduli problem for unstable domains to an equivalent moduli problem of maps from stable domains.
\end{remark}

\subsection{Basic orientations}\label{sec : basic orientations}   
To orient the space of solutions to the equation $F(u,s)=0$ in the case when the finite dimensional spaces added to the source and target are oriented we must orient the index bundle of $\bar\partial_{J}$. In the case of closed holomorphic curves this is straightforward since the index bundle is a complex bundle. As shown in \cite{FOOO} in the case of maps from the disk into a symplectic manifold $X$ with Lagrangian boundary condition $L\subset X$, the situation in the open case is different and involves the following relative spin condition on the Lagrangian: the second Stiefel Whitney class of the tangent bundle $TL$ to $L$, $w_2(L)\in H^{2}(L;\Z_2)$ should be the restriction of a class $\mathrm{st}\in H^{2}(X)$. In \cite[Theorem 8.1.1]{FOOO} it is shown that the index bundle $\ind(\bar\partial_{J})$ over the space of maps from the disk is orientable if $L$ is relatively spin and that a choice of a relative spin structure on $L$ determines an orientation.  

In the case under consideration here, orientable Lagrangians in Calabi-Yau threefolds, the relative spin condition is trivially met: an orientable $3$-manifold has trivial tangent bundle. Note that an orientation and a spin structure on $L$ is the same thing as a trivialization of $TL$ up to homotopy. The generalization of the orientation results from \cite{FOOO} is straightforward. We give a brief discussion. Let $\ind_{\chi,h}(\bar\partial_{J})$ denote the index bundle over the space of maps $u\colon (S,\partial S)\to (X,L)$ where $S$ is a connected Riemann surface of Euler characteristic $\chi$ with $h$ boundary components.

\begin{lemma}\label{l : basic orientations}
The index bundle $\ind_{\chi,h}(\bar\partial_{J})$ is orientable and the choice of an orientation and spin structure of $L$ determines an orientation. 
\end{lemma}    

\begin{proof}
We follow \cite[Chapter 8]{FOOO}. The linearized operator $L\bar\partial_{J}$ over $S$ can be homotoped to a linearized operator $L'\bar\partial_{J}$ over a closed surface $S'$ with $h$ disks $D_1,\dots, D_h$ attached, where the bundle and boundary condition are trivialized on each $D_k$ to $(\C^{3},\R^{3})$ and the operator agrees with the standard $\bar\partial$-operator. (This is where the trivialization of $TL$ is used.) Now $\ind(L'\bar\partial_J)$ is oriented as a complex bundle and $\ind(\bar\partial)=\det(\R^{3})$ over each disk $D_k$. To obtain $\ind(L\bar\partial_J)$ we need to glue the bundles at the points where $D_k$ are attached to $S'$. This uses a standard linear gluing argument, see e.g., \cite[Lemma 7.2]{ES}. Introduce small positive exponential weights and explicit solutions at the marked points viewed as punctures and glue. Here the twist parameter in the gluing extends over the disk and taking this automorphism into account we find that the index space $\ind'(\bar\pa)$ of the disks have dimension $2=3-1$. We then have 
\[
\ind_{\chi,h}(\bar\partial_J) =  \ind(L'\bar\partial_J) \otimes \bigotimes_{k=1}^{h} (\ind'(\bar\partial)\otimes \det(\C^3)),
\]
where the last factors are the gluing data at each puncture: require the values of sections to agree where glued and take out the twist automorphism.

The first factor is complex and factors in the last tensor product are even dimensional, so the orientation is independent on the ordering of boundary components. The lemma follows.  
\end{proof}

\subsection{Jet extensions and evaluation maps}\label{ssec:jetev}
With the basic Fredholm properties established we now describe how to show that generic holomorphic curves have the geometric properties needed to define the skein valued Gromov-Witten invariant. The ideas used are standard. We use evaluation maps of various kind and add finite dimensional spaces to the target and source of our maps. 

We describe the setup for showing that the boundary of our holomorphic curves give framed links in the Lagrangian (that thus give elements in the framed skein module). The mapping spaces we use are Sobolev spaces of maps $u\colon(\Sigma,\partial\Sigma)\to(X,L)$ with three derivatives in $L^{2}$. The smoothness assumption means that the restriction $\partial u\colon\partial\Sigma\to L$ has $\frac{5}{2}$ derivatives in $L^{2}$ and, in particular, the derivative $\frac{d}{ds}(\partial u)$ is continuous and we consider the $1$-jet evaluation along the boundary
\[ 
j^{1}(\partial u)\colon \partial\Sigma\to J^{1}(\partial\Sigma,L).
\]
In the language above this means that we add the boundary of the surface to the source space of the Fredholm operator $s\in \partial \Sigma$,  and the $1$-jet space of maps $\partial\Sigma\to L$ to the target space, $t\in J^{1}(\partial\Sigma,L)$.  

\begin{lemma}\label{l:immonbdry}
For generic 1-parameter families in $\mathcal{J}$ all bare holomorphic curves are immersions on the boundary.
\end{lemma}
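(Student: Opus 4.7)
The plan is a standard universal moduli space argument with a marked boundary point and an evaluation into the 1-jet bundle, designed to make the non-immersive locus appear as a negative-dimensional manifold. Fix a 1-parameter family $\Delta = \{J_\tau\}_{\tau \in I}$ in $\mathcal{J}$. Following the setup of Section \ref{ssec:jetev}, I adjoin to the source of the Cauchy--Riemann operator both the parameter $\tau \in I$ and a marked boundary point $s \in \partial\Sigma$, and to the target the evaluation
\begin{equation*}
t(u, s) := \partial_s u(s) \in T_{u(s)}L,
\end{equation*}
so that the locus I want to control is the universal
\begin{equation*}
\mathcal{M}^{\mathrm{n.i.}}(\Delta) := \bigl\{(u,s,\tau)\ :\ \tau \in I,\ u \text{ bare somewhere-injective } J_\tau\text{-holomorphic},\ s\in\partial\Sigma,\ t(u,s)=0\bigr\}.
\end{equation*}
The fiber $T_{u(s)}L$ has real dimension 3, so the non-immersion condition $t(u,s)=0$ has codimension 3.

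To apply Lemma \ref{l:basicsurjectivity}, I need to check that $d_{(u,s)}t$ is surjective at each point of $\mathcal{M}^{\mathrm{n.i.}}(\Delta)$. This is elementary: given any $v \in T_{u(s)}L$, multiply a cutoff function supported in a small collar of $s$ in $\Sigma$ by a section of $u|_{\partial \Sigma}^{\ast}TL$ extending $v$ at $s$, then push slightly into the interior to get a Sobolev variation $\delta u$ whose image under $d_{(u,s)}t$ is exactly $v$. With surjectivity of $d_{(u,s)}t$ in hand, Lemma \ref{l:basicsurjectivity} yields surjectivity of the full linearization including variations of $J$, and the Sard--Smale theorem then implies that for generic 1-parameter families $\Delta$ the universal locus $\mathcal{M}^{\mathrm{n.i.}}(\Delta)$ is a transversely cut out smooth manifold.

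A dimension count closes the argument. For bare curves with Maslov-zero boundary in a Calabi--Yau 3-fold, $\ind(\bar\partial_J) = 0$ by the formula of Section \ref{closedinvariance}. Adjoining $\tau$ adds one real dimension, adjoining $s \in \partial\Sigma$ adds another, and the codimension-3 constraint subtracts three, giving virtual dimension
\begin{equation*}
0 + 1 + 1 - 3 = -1.
\end{equation*}
A manifold of negative dimension is empty, so $\mathcal{M}^{\mathrm{n.i.}}(\Delta) = \varnothing$ for generic $\Delta$, which is precisely the statement of the lemma. The only nontrivial technical point beyond the standard functional-analytic bookkeeping (choosing Sobolev regularity for which the boundary $1$-jet is continuous, as in Section \ref{ssec:jetev}) is the surjectivity of $d_{(u,s)}t$; I expect this to be the main place to be careful, since one must ensure the bump-function construction lives in the configuration space $\mathcal{W}$ used to define the Fredholm problem.
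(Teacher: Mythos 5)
Your proposal is essentially the paper's own proof: the paper likewise adds a boundary marked point and the $1$-jet evaluation $j^{1}(\partial u)$ to the Fredholm problem, notes that the vanishing-derivative locus has codimension $3$ in $J^{1}(\partial\Sigma,L)$, and concludes via Lemma \ref{l:basicsurjectivity}, \eqref{eq:fiberedacs} and the dimension count $0+1+1-3=-1$ that the locus is empty in generic $1$-parameter families. One small correction to your surjectivity check: since $t(u,s)=\partial_s u(s)$, the variation $\delta u$ must have prescribed \emph{first derivative} $v$ at $s$ (e.g.\ $\delta u(s')=\chi(s')(s'-s)v$ in a local trivialization of $u|_{\partial\Sigma}^{\ast}TL$), not prescribed value as written; with that adjustment the argument goes through exactly as in the paper.
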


\begin{proof}
Formal maps of vanishing differential forms a codimension $3$ subvariety of $J^{1}(\partial\Sigma,L)$. Thus adding a boundary marked point and the $1$-jet extension to our Fredholm problem as described above, \eqref{eq:fiberedacs} implies that maps with vanishing derivative somewhere along the boundary does not appear in generic 1-parameter families (and at isolated points in generic 2-parameter families).  
\end{proof}

\begin{lemma}\label{l:genframing} 
For generic $J\in\mathcal{J}$ the boundary of any bare holomorphic curve is nowhere tangent to $\xi$. 	
For generic 1-parameter families in $\mathcal{J}$ there is a finite set of points where the tangent vector to the boundary curve of a bare holomorphic curve is tangent to the reference vector field $\xi$. In a neighborhood of any such instances the 1-parameter family is conjugate to the standard tangency family of Lemma \ref{l:tangency}. 
\end{lemma}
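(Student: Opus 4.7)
The plan is to apply the jet-evaluation Fredholm framework of Section~\ref{ssec:jetev} combined with Lemma~\ref{l:basicsurjectivity}. Both conclusions reduce to codimension counts for a tangency stratum in a suitable jet space, and the local model then follows from a further 2-jet refinement together with standard singularity theory.

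First, for a fixed $J\in\mathcal{J}$ I augment the Fredholm setup: add a boundary marked point $s\in\partial\Sigma$ to the source (contributing dimension $+1$) and take as target datum the evaluation $(\partial u(s),\,\dot{\partial u}(s))\in J^{1}(\partial\Sigma,L)$ together with $\xi(\partial u(s))$. By Lemma~\ref{l:immonbdry} we may assume $\dot{\partial u}(s)\neq 0$; by transversality of the zeros of $\xi$ (a codimension-$3$ condition in the $1$-dimensional boundary) I may further assume $\partial u(s)\notin Z(\xi)$. The tangency condition $\dot{\partial u}(s)\parallel \xi(\partial u(s))$ then cuts out a codimension-$2$ submanifold of the fiber of $J^{1}(\partial\Sigma,L)$ over the boundary point (two linear equations pinning down the direction in the $3$-dimensional $T_{\partial u(s)}L$). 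Bare holomorphic curves are somewhere injective in $\mathcal{J}$, so Lemma~\ref{l:basicsurjectivity} applies, and the parametric Sard--Smale output \eqref{eq:fiberedacs} yields, for generic $J$, a transversely cut-out moduli space of tangency configurations of expected dimension $0+1-2=-1$, hence empty. This proves the first assertion.

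For the second assertion I run the same argument with a $1$-parameter family $J_{t}$, gaining one extra dimension; the tangency moduli over the path becomes a $0$-dimensional manifold, which is finite once combined with the compactness result in \eqref{prop:codimonecompact} for the family. At each such moment $t_{0}$, to obtain the normal form I refine the setup to the $2$-jet: I encode as further target data the projection of $(\ddot{\partial u}(s),\dot{J_{t}})$ to the two-plane $T_{\partial u(s)}L/\langle\xi\rangle$. The versal cusp condition demands (i) nondegenerate quadratic tangency, i.e.\ $\ddot{\partial u}(s)$ has nonzero image in $T_{\partial u(s)}L/\langle\xi,\dot{\partial u}(s)\rangle$, and (ii) the $t$-derivative of $\dot{\partial u}$, viewed in the same quotient, is nonzero and independent of (i). Each of these is an open condition on an additional finite-dimensional datum, so together with the codimension-$2$ tangency it cuts out the complement of a codimension-$\ge 3$ locus in jet space; by Lemma~\ref{l:basicsurjectivity} applied to the enlarged problem, a generic path $J_{t}$ avoids the bad locus while still meeting the primary tangency stratum transversely.

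Once (i) and (ii) hold, I identify local Darboux coordinates near $\partial u(s)$ as in Lemma~\ref{l:tangency}, with $\xi=\partial_{x_{3}}$ and $L=\mathbb{R}^{3}$. The projection of the boundary curve to the $(x_{1},x_{2})$-plane is then a smooth plane curve with a semi-cubical cusp at $t=t_{0}$ (by (i)) that is versally unfolded by $t$ (by (ii)); after a smooth change of source and parameter coordinates this family becomes the standard model $\xi\mapsto(\xi^{2},\xi(\xi^{2}+s))$ appearing in Lemma~\ref{l:tangency}. Extending by the $\pm x_{3}$-direction via the implicit function theorem applied to the Cauchy--Riemann equation with matching $1$-jet data recovers the full family $u_{\pm}$ up to diffeomorphism, giving the desired conjugacy. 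The main obstacle is bookkeeping: one must check that the enlarged jet-evaluation is still governed by Lemma~\ref{l:basicsurjectivity}, i.e.\ that the bare, somewhere-injective hypothesis survives the addition of $2$-jet target data and the extra family parameter, so that all the codimension estimates are realized by a generic choice of path in $\mathcal{J}$.
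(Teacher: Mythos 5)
Your proposal is correct and follows essentially the same route as the paper: augment the Fredholm problem with a boundary marked point and the $1$-jet evaluation $j^{1}(\partial u)$, note that the tangency locus (together with the fibers over $Z(\xi)$) has codimension $2$ (resp.\ $3$) in $J^{1}(\partial\Sigma,L)$, and conclude via Lemma \ref{l:basicsurjectivity} and \eqref{eq:fiberedacs} that it is avoided for generic $J$ and met at isolated, transverse instances in generic $1$-parameter families, which are then identified with the versal tangency model of Lemma \ref{l:tangency}. Your explicit $2$-jet refinement justifying versality is a more detailed spelling-out of the step the paper states tersely, but it is the same argument in substance.
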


\begin{proof}
	Outside the zeros $Z(\xi)$ of the vector field $\xi$, $\xi$ determines a codimension two subvariety $N'_{\xi}$ of $J^{1}(\partial\Sigma,L)$ where the formal differential has image in the line spanned by $\xi$. Let $N_{\xi}\subset J^{1}(\partial\Sigma,L)$ denote the union of the fibers over the zeros of $\xi$ and $N'_{\xi}$. As above, extending the Fredholm problem with the 1-jet extension of the evaluation map $j^{1}(\partial u)$, we find the following. In generic 0-parameter families the tangent vector of $\partial u$ is everywhere linearly independent from $\xi$. In generic 1-parameter families there are isolated instances where the 1-jet extension intersects $N_{\xi}$ transversely in its smooth top dimensional stratum. Such instances correspond to the versal deformation of a tangency with $\xi$, see the local model in the proof of Lemma \ref{l:tangency}.   
\end{proof}

In order to prove that boundaries are embedded we consider the multi-jet extension
\[ 
j^{0}\times j^{0}(\partial u)\colon \partial\Sigma\times\partial\Sigma\to
J^{0}(\partial\Sigma\times\partial\Sigma,L\times L),
\]
adding $\partial\Sigma\times\partial\Sigma$ to the source and $J^{0}(\partial\Sigma\times\partial\Sigma,L\times L)$ to the target.

\begin{lemma}\label{l:genemb}
	For generic $J$ all bare holomorphic curves in $\M(J)$ are embeddings when restricted to the boundary.
	For generic 1-parameter families in $\mathcal{J}$ there is a finite sets where the boundary curves have double points and the 1-parameter family gives a versal deformation of the double point. 
\end{lemma}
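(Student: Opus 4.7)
The plan is to mimic the arguments of Lemmas \ref{l:immonbdry} and \ref{l:genframing}, augmenting the Fredholm problem with the two-point boundary evaluation and imposing that the two evaluated points coincide. Concretely, I would add to the source the open configuration space $(\partial\Sigma\times\partial\Sigma)\setminus\Delta$, where $\Delta$ is the diagonal, and add to the target the diagonal $\Delta_{L}\subset L\times L$, with the target constraint $t(u,s_{1},s_{2})=(u(s_{1}),u(s_{2}))\in\Delta_{L}$. Since $\dim L=3$, the diagonal $\Delta_{L}$ has codimension $3$ in $L\times L$, so this adds $2$ to the dimension of the source and subtracts $3$ from the target: the formal dimension of the universal moduli of bare curves carrying a boundary double point is $-1$ relative to the original moduli.

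The main step is verifying surjectivity of the linearization for the augmented problem at any solution $(u,s_{1},s_{2},J)$ representing a boundary double point. By Lemma \ref{l:basicsurjectivity}, it suffices to show that the evaluation constraint $d_{(u,s_{1},s_{2})}t$ is transverse to $\Delta_{L}$; equivalently, that by varying $s_{1}$, $s_{2}$, and the map $u$ in a neighborhood of one of the two preimages (using that $u$ is somewhere injective on an open set, so in particular near at least one of the two preimages one may freely perturb), one spans $T_{u(s_{1})}L\oplus T_{u(s_{2})}L$ modulo $\Delta_{L}$. This is standard; where the cokernel argument of Lemma \ref{l:basicsurjectivity} applies, one uses that the two boundary branches may be made to disagree by varying $J$ on a small ball meeting one branch but not the other. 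Applying Sard--Smale as in \eqref{eq:fiberedacs} and projecting to the $\mathcal{J}$-factor yields: for generic $J$, the moduli of bare curves with a boundary double point is empty, and for generic paths it is a $0$-dimensional manifold, hence a finite set of instants.

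For the versal deformation claim at such an isolated instant $J=J_{t_{0}}$, transversality to $\Delta_{L}$ of the $1$-parameter family $(u_{t},s_{1}(t),s_{2}(t))$ means that the image of the differential of the evaluation in $(T_{u(s_{1})}L\oplus T_{u(s_{2})}L)/T\Delta_{L}$ is surjective. Combined with Lemma \ref{l:genframing} (both branch tangents independent of $\xi$ at $t_{0}$) and Lemma \ref{l:immonbdry} (both branches immersed), this means that after a local diffeomorphism of $L$ at $u(s_{1}(t_{0}))=u(s_{2}(t_{0}))$ and a local change of parameter in $t$, the two boundary arcs take the standard form of two linearly independent $\R$-lines in $\R^{3}$ being pushed apart by a normal translation parameter: precisely the versal deformation of a generic boundary double point.

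The main potential obstacle is the separation of branches argument needed for surjectivity: one must know that the two preimages of the double point do not lie in the support of every $J$-variation, which is ensured by choosing a small ball around the image of a point in the somewhere-injective open set $V$ lying on one branch but not on both. Since we are on a codimension-$0$ stratum (the domain is smooth and $u$ is immersed on the boundary by Lemma \ref{l:immonbdry}), the two preimages of a transverse boundary double point are genuinely distinct and may be separated in the target, so the argument goes through exactly as in the classical treatment.
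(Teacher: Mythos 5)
Your proposal follows essentially the same route as the paper: augment the Fredholm problem with the two-point boundary evaluation into $L\times L$, use that the diagonal $\Delta_{L}$ has codimension $3$, get transversality from somewhere injectivity (Lemma \ref{l:basicsurjectivity}) together with Sard--Smale as in \eqref{eq:fiberedacs}, and conclude emptiness of the double-point locus for generic $J$ and isolated transverse crossings (versal deformations of a double point with independent tangents) along generic paths. The only point the paper makes explicit that you leave implicit is that the immersion property from Lemma \ref{l:immonbdry} excludes double points with nearby preimages, which keeps the relevant locus in a compact region away from the diagonal of $\partial\Sigma\times\partial\Sigma$ and is what justifies the finiteness of the crossing instants.
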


\begin{proof}
Note that the diagonal $\Delta_{L}\subset L\times L$ gives a codimension $3$ subvariety of $J^{0}(\partial\Sigma\times\partial\Sigma,L\times L)$ and that the immersion condition implies that there is a neighborhood $U\subset\partial\Sigma\times\partial\Sigma$ such that $(j^{0}\times j^{0})^{-1}(U\setminus\Delta_{\partial\Sigma})=\varnothing$. Transversality of this evaluation map than implies that $(j^{0}\times j^{0})^{-1}(\Delta_{L})\setminus \Delta_{\partial\Sigma}$ is empty for generic $0$-parameter families, and consists of isolated instances of generic crossings corresponding to the versal deformation of the intersection point where the tanget vectors are linearly independent.   
\end{proof}

Finally, we consider the interior of the curve in a similar way. 

\begin{lemma}\label{l:gen4chain}
	For generic $J$ all bare holomorphic curves are embeddings transverse to the 4-chain $C$.
	For generic 1-parameter families in $\mathcal{J}$ this holds except at isolated instances where some holomorphic curve is quadratically tangent to the 4-chain or some curve intersect $L$ at an interior point. Near such instances the 1-parameter family is a versal deformation.
\end{lemma}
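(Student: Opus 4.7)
The plan is to follow exactly the template of Lemmas~\ref{l:immonbdry}, \ref{l:genframing}, and \ref{l:genemb}: enlarge the Fredholm problem by adding interior marked points together with appropriate $0$- and $1$-jet evaluation maps, invoke Lemma~\ref{l:basicsurjectivity} to conclude that the extended linearization is surjective, and then read off the statement from a Sard--Smale dimension count against the relevant strata in jet space. There are three separate phenomena to control: interior double points, tangencies with $C$, and interior intersections with $L$.

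For the embedding part I would add two distinct interior marked points $z_1,z_2\in\inr(\Sigma)$ together with the $0$-jet evaluation $(u;z_1,z_2)\mapsto(u(z_1),u(z_2))\in X\times X$, and impose the diagonal $\Delta_X$ as the critical stratum. Since $\Delta_X$ has real codimension $6$, the two interior marked points contribute $4$ to the source, and the virtual dimension of moduli is $0$, the expected dimension of the locus of interior double points of somewhere-injective maps is $0+4-6=-2$, which is empty both for generic $J$ and in generic $1$-parameter families. Combined with Lemma~\ref{l:genemb}, this yields the embedding property in the lemma.

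For the tangency with $C$ I would add one interior marked point $z\in\inr(\Sigma)$ and use the $1$-jet evaluation $j^1 u(z)\in J^1(\inr(\Sigma),X)$. The non-transverse locus is the stratum $\Sigma^{\mathrm{tan}}_C\subset J^1(\inr(\Sigma),X)$ where the $0$-jet lies in $C$ (codim $2$) and where the composition $T_z\Sigma\to T_{u(z)}X\to T_{u(z)}X/T_{u(z)}C$ fails to be surjective, the latter being a determinantal codim-$1$ condition on a $2\times 2$ block; hence $\Sigma^{\mathrm{tan}}_C$ has codim $3$. Adding $2$ to the source from the marked point, the expected dimension is $2-3=-1$ for a fixed $J$ and $0$ in $1$-parameter families, giving isolated tangency instants. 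A further codim-$1$ condition on the $2$-jet, namely that the restriction of the projection to the $\ker(du(z))$-direction vanishes to second order, rules out contact of order $\ge 3$, so generic tangencies are Morse; by the standard principle that Thom transversality of the jet evaluation to the smooth top stratum of $\Sigma^{\mathrm{tan}}_C$ is equivalent to versality of the induced family, we obtain the claimed versal deformation. The interior $L$-crossing case is handled identically using the $0$-jet evaluation: $L\subset X$ has real codim $3$, giving expected dimension $-1$ for generic $J$ and $0$ in $1$-parameter families, and transversality of the evaluation to $L$ at such an instant is versality.

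The main obstacle in each step is to verify the hypothesis of Lemma~\ref{l:basicsurjectivity}, that is, that the extended finite-dimensional evaluation $d_{(u,z)}t$ surjects at the relevant critical moment. For somewhere-injective $u$ this reduces to the standard fact that perturbations of $J$ supported in an arbitrarily small neighborhood of the injective image $u(z)$ realize arbitrary perturbations of the $k$-jet of $u$ at $z$; combined with surjectivity of $d_u\bar\partial_J$ on variations of the map, this gives surjectivity of the full linearization onto the tangent space of each smooth jet stratum, and the dimension counts above then yield the lemma.
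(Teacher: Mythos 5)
Your proposal is correct and takes essentially the same route as the paper: add interior marked points with $0$- and $1$-jet evaluations, use somewhere-injectivity via Lemma~\ref{l:basicsurjectivity} to get surjectivity of the extended linearization, and conclude by Sard--Smale dimension counts against the relevant strata (the diagonal in $X\times X$, the tangency locus of $C$, and $L$ itself). The only small point the paper makes explicitly that you leave implicit is that the same interior $1$-jet evaluation is first used to exclude interior critical points of $du$ (so the curves are interior immersions, as required for the embedding statement); this follows from the identical dimension count in your setup.
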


\begin{proof}
The proof is similar to the arguments above. We  add an interior marked point to the source and the 1-jet space $J^{1}(\Sigma,X)$ to the target to find that solutions are immersions and  have transversality properties with respect to $C$ and $L$ as stated. To see the embedding property we then add $\Sigma\times\Sigma$ to the source and $J^{0}(\Sigma\times\Sigma,X\times X)$ to the target.
\end{proof}

\subsection{Orientation twisting and different skein relations}
Our proof of the skein relation implicitly fixes choices of the orientation of the Lagrangian and a spin structure. It is easy to check that the skein relation is invariant under these choices. For example for the orientation of the Lagrangian, $a$ should be replaced by $a^{-1}$ and crossing signs change which means that the left hand sides of the skein relations change sign. The mod $2$ number of boundary components differ between the left and right hand side in the skein equations and that by definition of the Fukaya orientation there is then an additional orientation sign difference between the left and right hand side and thus the skein relations are preserved.

One orientation choice  does affect the skein relation: the choice of orientation of the complex plane itself, which induces the orientation on the moduli space of curves. Here we first note that this reversal of orientation does \emph{not} change the orientation input from gluing a disk in the Fukaya orientation: the change of orientation of the disk changes its boundary orientation but then the orientation of the quotient of the orientation changes complex gluing parameter by the orientation changed rotation along the boundary remains unchanged. For the closed curve component the orientation change is $(-1)^{1-g}$. Now at an elliptic boundary crossing the genus $g$ stays constant and the number of boundary components change by $+1$ and there is no sign difference. In the case of hyperbolic boundary splitting there are two cases, when the crossing branches belong to the same boundary component the number of boundary components increase and the genus remains the same which means no sign change. However, in the case when the crossing branches belong to different components the number of boundary components decrease and the genus increases by $1$ which introduces a sign. Thus when the orientation of the complex plane underlying the space of stable domains is changes we get new skein relations, the elliptic case is the same but the hyperbolic version splits into two different relations that differ by a sign in the right hand side depending on whether the number of boundary components increase $(+)$ or decrease $(-)$.          

\section{Regularity at marked points and punctures}
In the gluing results relating crossing and nodal 1-parameter families of curves we introduce marked points and then consider the marked points as punctures. Near marked points our mapping spaces are usualk Sobolev spaces but near punctures we have half infinite strips (boundary punctures) or cylinders interior punctures. In this section we review basic relations between Sobolev norms in this set up. The results are formulated for cylinders, the case of strips follows by doubling.

We compare the regularity requirements for Sobolev spaces $H^{m}$ and $H^{m,\delta}$, see Sections \ref{sssec : hc} and \ref{sssec : ec}. Let $S$ be a Riemann surface and $p\in S$. 
Let $s+it\in[0,\infty)\times S^1$ and let $z=e^{-2\pi(s+it)}$ be coordinates in a neighborhood $D$ of $p\in S$. 
Consider a map $u\colon D\setminus\{p\}\to\R^{2n}$ 
and let $u_{\infty}\colon [0,\infty)\times S^1\to\R^{2n}$ 
and $u_{0}\colon D^{\circ}\to\R^{2n}$ be the corresponding maps in local coordinates. 
Take the weight $\delta\in(0,2\pi)$. We have the following basic relations between regularity requirements. 

\begin{lemma}\label{l:markedvspuncturenorms}
The integral norms are related as follows:
\begin{align}\label{eq:ref pull-back L^2} 
\int_{D}|u_{0}|^{2} \,dxdy  \ &= \ \int_{[0,\infty)\times I} |u_{\infty}|^{2} e^{-4\pi s} \,dsdt,\\\label{eq:push forward L^2}
\int_{[0,\infty)\times I}|u_{\infty}|^{2}\, dsdt \ &= \ \int_{D} |z|^{-2}|u_{0}|^{2} \,dxdy.
\end{align}
and
\begin{align}\label{eq:ref pull-back L^2 on derivatives}
\int_{D}|D^{(k)}u_{0}|^{2} \,dxdy \ &\asymp \ \sum_{j=1}^{k}\int_{[0,\infty)\times I} |D^{(j)}u_{\infty}|^{2} e^{4\pi(j-1)s}\,dsdt,\\\notag 
\int_{[0,\infty)\times I}|D^{(k)}u_{\infty}|^{2}\,dsdt \ &\asymp \ 
\sum_{j=1}^{k}\int_{D} |z|^{j-2}|Du_{0}^{(j)}|^{2} \,dxdy, 
\end{align}
where for positive functions $a,b$, we write $a \asymp b$ to mean that there exists constants $c,C>0$ such that $c a(x)< b(x)< Ca(x)$ for all $x$.
\end{lemma}

\begin{lemma}\label{l:compareL2}
Assume that $u_{\infty}$ and $Du_{\infty}$ both lie in $L^{2}_{\delta}$ then the following hold.
\begin{itemize}
\item[$(i)$] $u_{0}$ and $Du_{0}$ both lie in $L^{2}$.
\item[$(ii)$] For any $|\alpha|\ge 2$, if $D^{\beta}u_{0}$ lies in $L^{2}$ for $|\beta|\le|\alpha|$ then $D^{\alpha}u_{\infty}$ lies in $L^{2}_{\delta}$. 
\end{itemize}	
\end{lemma}

\begin{proof}
We have 
\begin{alignat*}{2}
d \log z &= \frac{dz}{z} = - 2 \pi (ds + i dt),\qquad
z \partial_{z} &&= - \frac{1}{2\pi} \left(\partial_{s} 
- i \partial_{t} \right),\\
\partial_{s} &= 
 - 2 \pi \left( z\partial_{z}  + \bar z \partial_{\bar z}\right),\qquad\qquad
\partial_{t} &&= - 2 \pi i \left( z \partial_{z}  - \bar z \partial_{\bar z} \right),
\end{alignat*}
Also $|z| = e^{-2 \pi s}$ and so $e^{\delta s} = |z|^{-\delta/2\pi}$. Thus, 
$$
\int_{[0,\infty)\times I}|u_{\infty}|^{2}e^{2\delta s} \,dsdt \ \sim \
\int_{|z| < 1} |u_{0}|^2 |z|^{-2\left(\frac{\delta}{2\pi} \right)}  |z|^{-2} \,dz d \bar{z}
$$
and, if $|\alpha|>0$,
$$
\int_{[0,\infty)\times I}|D^{\alpha}_{s,t} u_{\infty}|^{2}e^{2\delta s} \,dsdt \ \sim \
\sum_{|\beta|=1}^{|\alpha|}
\int_{|z| < 1} |z|^{2 |\beta|} |D^{\beta}_{z, \bar{z}} u_{0}|^2 |z|^{-2\left(\frac{\delta}{2\pi} \right)}  |z|^{-2} \,dz d \bar{z},  
$$
where $\sim$ means up to multiplication by constant.
The result follows.
\end{proof}

Using the Sobolev embedding theorem we get more information. 
\begin{lemma}\label{l:H3toH3delta}
Let $m\ge 3$. If $u_{0}$ has $m$ derivatives in $L^{2}$ then $u_{\infty}$ has $m$ derivatives in $L_{\delta}^{2}$.
\end{lemma}

\begin{proof}
By Lemma \ref{l:compareL2} $(ii)$, it suffices to estimate the $L^{2}_{\delta}$-norm of $u_{\infty}$ and $Du_{\infty}$. By Sobolev embedding, $u_{0}$ lies in $C^1(D)$ and we have $|Du_{0}| < C$ for some constant $C$. Therefore, $|u_{0}(z)| < C|z|$ and thus, 
$$\int |u_{\infty}(s+it)|^2 e^{2 \delta s}\, ds dt \ < \ \int C e^{- 4 \pi s} e^{2 \delta s}\, ds dt \ = \ 
C \int e^{2 (\delta - 2\pi)s}\, ds dt  \ < \ \infty $$
and 
$$ 
\int \left|Du_{\infty} \right|^2 e^{2 \delta s}\,  ds dt \ \sim \  
\int \left(\left |z \partial_{z} u_{0} \right| +  \left|\bar z \partial_{\bar z} u_{0}\right| \right) ^2 
e^{2 \delta s}\,  ds dt  \ \sim \ \int e^{-4\pi s} e^{2 \delta s}\, ds dt \ < \ \infty.$$
\end{proof}

\bibliographystyle{plain}
\bibliography{skeinrefs}

\end{document}